\numberwithin{figure}{section}   
\definecolor{red}{rgb}{.8,0,0}
\def\red{\color{red}}
\definecolor{bblu}{rgb}{0,0,1}
\newtheorem{thm}{Theorem}[section]
\newtheorem{cor}[thm]{Corollary}
\newtheorem{lem}[thm]{Lemma}
\newtheorem{prop}[thm]{Proposition}
\newtheorem{obs}[thm]{Observation}
\theoremstyle{definition}
\newtheorem{rem}[thm]{Remark}
\theoremstyle{definition}
\theoremstyle{definition}
\newtheorem{ex}[thm]{Example}
\newcommand{\mtx}[1]{\begin{bmatrix} #1 \end{bmatrix}}
\newcommand{\spec}{\operatorname{spec}}
\newcommand{\rank}{\operatorname{rank}}
\newcommand{\diag}{\operatorname{diag}}
\newcommand{\tr}{\operatorname{tr}}
\newcommand{\mult}{\operatorname{mult}}
\newcommand{\nul}{\operatorname{null}}
\newcommand{\G}{\mathcal{G}}
\newcommand{\A}{\mathcal{A}}
\newcommand{\B}{\mathcal{B}}
\newcommand{\PP}{\mathcal{P}}
\newcommand{\QQ}{\mathcal{Q}}
\newcommand{\HH}{\mathcal{H}}
\newcommand{\oml}{{\bf m}}
\newcommand{\omlo}{{\bf m}_0}
\newcommand{\R}{\mathbb{R}} 
\newcommand{\Rnn}{\R^{n\times n}} 
\newcommand{\Rn}{\R^{n}}
\newcommand{\bx}{{\bf x}}
\newcommand{\bv}{{\bf v}}
\newcommand{\bw}{{\bf w}}
\newcommand{\bu}{{\bf u}}
\newcommand{\br}{{\bf r}}
\newcommand{\sym}{\mathcal{S}}
\newcommand{\symo}{\mathcal{S}_0}
\newcommand{\mr}{\operatorname{mr}}
\newcommand{\mro}{\operatorname{mr}_0}
\newcommand{\MRo}{\operatorname{MR}_0}
\newcommand{\M}{\operatorname{M}}
\newcommand{\maxmulto}{\operatorname{MM}_0}
\newcommand{\Mo}{\operatorname{M}_0}
\newcommand{\nc}{\operatorname{nc}}
\newcommand{\nev}{\operatorname{ne}}
\newcommand{\cyc}{\operatorname{cyc}}
\newcommand{\CC}{\mathcal{C}}
\newcommand{\dcup}{\mathbin{\,\sqcup\,}}
\newcommand{\noi}{\noindent} 
\newcommand{\x}{\times}
\newcommand{\lam}{\lambda}
\newcommand{\veps}{\varepsilon}
\newcommand{\bit}{\begin{itemize}}
\newcommand{\eit}{\end{itemize}}
\newcommand{\ben}{\begin{enumerate}}
\newcommand{\een}{\end{enumerate}}
\newcommand{\beq}{\begin{equation}}
\newcommand{\eeq}{\end{equation}}
\newcommand{\bea}{\begin{eqnarray*}}
\newcommand{\eea}{\end{eqnarray*}}
\newcommand{\bean}{\begin{eqnarray}}
\newcommand{\eean}{\end{eqnarray}}
\newcommand{\bpf}{\begin{proof}}
\newcommand{\epf}{\end{proof}\ms}
\newcommand{\bmt}{\begin{bmatrix}}
\newcommand{\emt}{\end{bmatrix}}
\newcommand{\ms}{\medskip}
\newcommand{\beqa}{\begin{array}}
\newcommand{\eeqa}{\end{array}}
\newcommand{\OL}{\overline}
\newcommand{\lc}{\left\lceil}
\newcommand{\rc}{\right\rceil}
\newcommand{\lf}{\left\lfloor}
\newcommand{\rf}{\right\rfloor}
\newcommand{\lp}{\left(}
\newcommand{\rp}{\right)}
\newcommand{\lb}{\left[}
\newcommand{\rb}{\right]}
\newcommand{\lsb}{\left\{}
\newcommand{\rsb}{\right\}}
\newcommand{\cp}{\,\Box \,}
\title{Inverse eigenvalue and related problems for hollow matrices described by graphs}
\author{F. Scott Dahlgren\thanks{Department of Mathematics and Statistics, Georgia State University, Atlanta, GA 30303, USA (fdahlgren1@student.gsu.edu).} \and  Zachary Gershkoff\thanks{Department of Mathematics, Louisiana State University, Baton Rouge, LA 70803, USA (zgersh2@lsu.edu).}\and Leslie Hogben\thanks{Department of Mathematics, Iowa State University,
Ames, IA 50011, USA and American Institute of Mathematics, San Jose, CA 95112, USA (hogben@aimath.org).}\and
Sara Motlaghian\thanks{Tri-Institutional Center for Translational Research in Neuroimaging and Data Science (TReNDS), Georgia State University, Georgia Institute of Technology, and Emory University, Atlanta, GA 30303, USA (smotlaghian1@gsu.edu)}\and Derek Young\thanks{Department of Mathematics and Statistics, Mount Holyoke College, South Hadley, MA 01075, USA (dyoung@mtholyoke.edu).}}
\begin{document}
\maketitle
\begin{abstract}
A hollow matrix described by a graph $G$ is a  real symmetric matrix having all diagonal entries equal to zero and with the off-diagonal entries governed by
the adjacencies in $G$.   For a given graph $G$, the determination  of all possible spectra of  matrices associated with $G$ is the hollow inverse eigenvalue problem for $G$. Solutions to the hollow inverse eigenvalue problems for paths and complete bipartite graphs are presented. Results for related subproblems such as possible ordered multiplicity lists, maximum multiplicity of an eigenvalue, and minimum number of distinct eigenvalues are presented for additional families of graphs. \end{abstract}

\noi {\bf Keywords}  inverse eigenvalue problem; hollow matrix; maximum multiplicity; minimum number of distinct eigenvalues; ordered multiplicity list; minimum rank; maximum nullity

\noi{\bf AMS subject classification} 05C50, 15A18, 15A29, 15B57


\section{Introduction}

 Inverse eigenvalue problems, which refer to determining all possible multisets of eigenvalues (spectra)  for matrices fitting some description, appear in various contexts throughout engineering and the mathematical, physical, biological, and social sciences. Graphs can be used to describe relationships in an application and the eigenvalues of associated matrices govern the behavior of the system.   The {\em inverse eigenvalue problem of a graph (IEP-$G$)} refers to determining the possible spectra (multisets of eigenvalues) of real symmetric matrices whose pattern of nonzero off-diagonal entries is described by the edges of a given graph.  

More precisely, a graph $G=(V(G),E(G))$ consists of a finite nonempty set of vertices $V(G)$  and a set $E(G)$ of edges, which are two element subsets of vertices. The edge $\{v_i,v_j\}$ is often denoted by $v_iv_j$.  
The {\em order} of $G$ is the number of vertices in $G$.  
The {\em set  of symmetric matrices described by $G$} is
\[\sym(G)=\{A=[a_{ij}]\in S_n(\R):  a_{ij} \ne 0 \mbox { if and only if } v_iv_j \in E(G) \mbox{ for all } 1\le i<j\le n\}, \] where $n$ denotes the order of $G$ and $S_n(\R)$ denotes that set of symmetric $n\x n$ real matrices. Thus the  IEP-$G$ is to determine the possible spectra  of the matrices in $\sym(G)$. Let $\A(G)$ denote the adjacency matrix of $G$, i.e., $a_{ij}=1$ if $v_iv_j\in E(G)$ and $a_{ij}=0$ otherwise; clearly $\A(G)\in\sym(G)$, as are several other matrices associated with a graph (such as the Laplacian).

The IEP-$G$ is a very challenging problem and was originally approached through   the study of subproblems, such at the maximum multiplicity of an eigenvalue, the minimum number of distinct eigenvalues, and  ordered multiplicity lists for eigenvalues.  
The {\em maximum multiplicity  of $G$}  is  $\M(G) = \max\{\mult_A (\lam) :  A \in \sym(G)\}$, where
$\mult_A(\lam)$ denotes the multiplicity of $\lambda$ as an eigenvalue of $A$.  Since $A\in\sym(G)$ implies $A-\lam I\in\sym(G)$, $\M(G)= \max\{\nul A :  A \in \sym(G)\}$. A related parameter is the  minimum rank of $G$, 
$ \mr(G) = \min\{\rank A :  A \in \sym(G)\}$.

Our focus is on the inverse eigenvalue problem for  hollow symmetric matrices described by a graph.  A \emph{hollow matrix} is a square matrix all of whose diagonal entries are zero.   In some applications for which spectra need to be determined, the diagonal entrees are known to be zero.  For a graph $G$ of order $n$, the matrices in \[\symo(G) = \{A \in \sym(G): a_{ii} = 0, \ 1 \leq i \leq n\}\] can also be viewed as weighted adjacency matrices of $G$.  
The \emph{hollow symmetric inverse eigenvalue problem of a graph (HIEP-$G$)}  is to determine the possible spectra of $\symo(G)$.

  The HIEP-$G$ is also a very hard problem, and related problems that shed light on the HIEP-$G$  are studied, as is the case for the IEP-$G$.
The \emph{maximum hollow nullity} and \emph{minimum hollow rank} of $G$, defined in \cite{mr0} as  \emph{maximum zero-diagonal nullity} and \emph{minimum zero-diagonal rank}, are 
\[\Mo(G) = \max \{ \nul A : A \in \symo(G)\}\mbox{ and }\mro(G) =\min \{ \rank A : A \in \symo(G)\}.\]
 As usual, $\mro(G)+\Mo(G)=|V(G)|$. 
 However, the hollow maximum nullity is not generally the maximum multiplicity of an eigenvalue of a hollow matrix, because it refers only to the maximum multiplicity of eigenvalue zero.  The \emph{maximum hollow multiplicity}  of $G$ is defined to be 
 \[\maxmulto(G) = \max \{ \mult_A(\lambda) : A \in \symo(G)\}.\]
For a symmetric matrix $A\in\Rnn$, $q(A)$ is the number of distinct eigenvalues of $A$.  
Define \[q(G)=\min\{q(A):A\in\sym(G)\}\mbox{ and }q_0(G)=\min\{q(A):A\in\symo(G)\}.\]
Let the distinct eigenvalues of a symmetric matrix $A\in\Rnn$ be denoted by
$ \mu_1(A)< \cdots < \mu_q(A)$  with multiplicities $m_1(A), \dots, m_q(A)$, respectively. 
The {\em ordered multiplicity list of $A$} is $\oml(A)=(m_1(A), \dots, m_q(A))$.  The {\em set of  ordered multiplicity lists of $G$} and the {\em set of hollow ordered multiplicity lists of $G$} are 
\[\oml(G)=\{\oml(A):A\in\sym(G) \} \mbox{ and }\omlo(G)=\{\oml(A):A\in\symo(G) \}.\]

We present full solutions to the HIEP-$G$ for paths, complete bipartite graphs, and graphs of order at most three in Section \ref{s:fam-small}. In that section we also present results on ordered mutiplicity lists, $q_0(G)$, $\Mo(G)$, and $\maxmulto(G)$ for various families and determine all possible ordered multiplicity lists for graphs of order four.  In Section \ref{s:bipart} we show that for a bipartite graph $G$, the spectrum of a hollow matrix described by $G$ is symmetric about the origin,  that $q(G)=2$ implies $q_0(G)=2$, and derive additional results for bipartite graphs. Section \ref{s:prelim} contains a variety of bounds and other tools for studying the subproblems.

The {\em trace constraint}  for hollow matrices \eqref{trace-eq} is that the sum of the eigenvalues is zero.  When the spectrum is written as distinct eigenvalues and multiplicities, i.e., $\{\mu_1^{(m_1)}, \mu_2^{(m_2)},\dots,\mu_q^{(m_q)}\}$, the trace constraint is
\beq \sum_{i=1}^q m_i\mu_i = 0. \label{trace-eq}\eeq
We will see that the trace constraint is a powerful tool for analyzing spectra of hollow matrices.
An ordered multiplicity list $(m_1,m_2,\dots,m_q)\in\omlo(G)$ is {\em hollow spectrally arbitrary} if for every set of real numbers $\mu_1<\mu_2<\dots<\mu_q$ that satisfies \eqref{trace-eq} there is some $A\in\symo(G)$ such that $\spec(A)=\{\mu_1^{(m_1)}, \mu_2^{(m_2)},\dots,\mu_q^{(m_q)}\}$.

Let $A=[a_{ij}]$ be a hollow symmetric matrix.  Given a generalized cycle $\mathcal{C}$ of $\G(A)$, let $\nc(\mathcal{C})$ denote the number of distinct cycles (of order 3 or more) in $\mathcal{C}$, and $\nev(\mathcal{C})$ denote the number of even components of $\mathcal{C}$, i.e., the number of cycles of even order at least four plus the number of edges. With a generalized cycle $\mathcal{C}$, we can associate a permutation $\pi_{\CC}$ of the vertices of $\mathcal{C}$ as follows: For each cycle in $\CC$, fix an orientation and then associate  a directed graph cycle $(v_{j_1}, v_{j_2}, \dots, v_{j_\ell})$  with the cyclic permutation $(v_{j_1} v_{j_2} \cdots\, v_{j_\ell})$.  Each edge component $\{v_{i_1}, v_{i_2}\}$  of $\CC$ is associated with the transposition $(v_{i_1}v_{i_2})$. The permutation $\pi_{\mathcal{C}}$  is  defined to be the product of these associated permutation cycles.  There are $2^{\nc(\CC)}$ different choices for the orientation of the cycles of $\CC$, and each choice yields a permutation that has the same sign as $\pi_{\CC}$, namely $(-1)^{\nev(\mathcal{C})}$.
Recall that \[p_A(x)=\det(xI-A)=x^n-S_1(A)x^{n-1}+ S_2(A)x^{n-2}+\dots\pm S_n(A)\] where $S_k(A)$ is the sum of all order $k$ principal minors of $A$ (so $S_n(A)=\det A$ and $S_1(A)=\tr A$).  Viewing $A\in\symo(G)$ as a weighted adjacency matrix of $G$, it follows from results in   \cite{Har62} 
that  $S_k(A)$ can be computed using generalized cycles of $G$:
\beq\label{eq:gencyc}
S_k(A) = \sum_{\mathcal{C} \in \cyc_k(\G(A))}
(-1)^{\nev(\mathcal{C})}2^{\nc(\mathcal{C})} a_{i_1\pi_{\mathcal{C}}(i_1)}\dots a_{i_k\pi_{\mathcal{C}}(i_k)},
\eeq
where the sum over the empty set is zero. (If $A$ has nonzero diagonal elements then the formula becomes much less useful and a looped graph must be used to describe $A$.) 
Equation \eqref{eq:gencyc} assists with the computation of the characteristic polynomial (and thus the nullity) of a specific matrix.  More generally, it is useful for computing $\mro(G)$.   
Observe that $\mro(G)\le r$ if  $G$ has no generalized cycles of order greater than $r$, and $\mro(G)= r$ if  $G$ has no generalized cycles of order greater than $r$ and $G$ has a unique generalized cycle of order $r$.

When studying the ranks of  matrices in $\sym(G)$, one studies only minimum rank, because it is well known and easy to see that the maximum rank is the order of the graph $G$, and every rank between the minimum and maximum ranks is realizable.  However, there are many graphs for which the maximum rank of a hollow matrix described by a graph $G$ must be  less than the order of $G$.
 The {\em  maximum hollow rank} of a graph $G$  is
$\MRo(G)=\max\{ \rank A :~A\in\symo(G)\}.$

\begin{thm}\label{maxrank}{\rm\cite{mr0}} For a graph $G$, $\MRo(G)$  is the maximum order of a generalized cycle of $G$.
\end{thm}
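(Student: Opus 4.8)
The plan is to translate the rank of a hollow matrix into the language of the coefficients $S_k$ appearing in \eqref{eq:gencyc}, and then analyze those coefficients combinatorially. The starting point is the elementary fact that for a \emph{symmetric} matrix $A$ of order $n$,
\[\rank A = \max\{k : S_k(A) \neq 0\}.\]
Indeed, writing $p_A(x) = \sum_{k=0}^n (-1)^k S_k(A)\, x^{n-k}$ with $S_0(A)=1$, the multiplicity of $0$ as a root of $p_A$ is $n$ minus the largest index $k$ with $S_k(A)\neq 0$; since $A$ is symmetric, this root multiplicity equals $\nul A$, and hence $\rank A = n-\nul A$ is exactly that largest index. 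Combining this with \eqref{eq:gencyc}, the rank of $A\in\symo(G)$ is governed entirely by which $S_k(A)$ vanish, and each $S_k(A)$ is a signed sum over the generalized cycles of $\G(A)=G$ of order $k$.

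Let $r$ denote the maximum order of a generalized cycle of $G$. For the upper bound, observe that if $k>r$ then $\cyc_k(G)=\emptyset$, so the sum in \eqref{eq:gencyc} is empty and $S_k(A)=0$ for \emph{every} $A\in\symo(G)$. By the displayed rank formula this forces $\rank A\le r$ for all such $A$, whence $\MRo(G)\le r$; this is the maximum-rank counterpart of the observation preceding the theorem.

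For the lower bound I would exhibit a single $A\in\symo(G)$ with $S_r(A)\neq 0$; since $S_k(A)=0$ for all $k>r$, such a matrix has $\rank A=r$. The idea is to regard $S_r$ as a polynomial in the edge-weight variables $\{a_{ij}: v_iv_j\in E(G)\}$ and show it is not identically zero. By \eqref{eq:gencyc}, each generalized cycle $\CC\in\cyc_r(G)$ contributes the monomial $a_{i_1\pi_{\CC}(i_1)}\cdots a_{i_r\pi_{\CC}(i_r)}$ with coefficient $(-1)^{\nev(\CC)}2^{\nc(\CC)}\neq 0$, where an edge lying on a graph cycle of $\CC$ contributes its weight to the first power while an edge component $\{v_i,v_j\}$ contributes $a_{ij}^2$. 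The key claim is that the assignment sending $\CC$ to its monomial is injective on $\cyc_r(G)$: from a monomial one recovers the edge components as the squared variables and the cyclic part as the degree-one variables, and because the components of a generalized cycle are vertex-disjoint and each cycle has length at least three, these two edge sets are disjoint and decompose uniquely into components. Hence distinct generalized cycles yield distinct monomials, no cancellation occurs, and $S_r$ is a nonzero polynomial with every coefficient of the form $\pm 2^{\nc(\CC)}$. A nonzero real polynomial cannot vanish on the whole nonempty open region where all the $a_{ij}$ are nonzero, so some choice of nonzero weights gives a matrix $A\in\symo(G)$ with $S_r(A)\neq 0$. This yields $\MRo(G)\ge r$ and completes the argument.

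The main obstacle is the non-cancellation step in the lower bound: one must be certain that the signed sum defining $S_r$ does not collapse to zero for every weighting. This is precisely where the injectivity claim does the work, and the reason it holds is the bookkeeping distinction between first-power (cycle) and squared (edge-component) variables together with vertex-disjointness of the components of a generalized cycle. Verifying carefully that this bookkeeping really does reconstruct $\CC$ uniquely is the one point that needs attention; everything else is formal.
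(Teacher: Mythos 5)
Your proposal is correct, but there is nothing in the paper to compare it against: Theorem \ref{maxrank} is imported from \cite{mr0} and stated without proof, so your argument stands or falls on its own merits. It stands. The upper bound is sound: since $A\in\symo(G)$ is symmetric, hence diagonalizable, the algebraic multiplicity of $0$ in $p_A$ equals $\nul A$, which justifies the identity $\rank A=\max\{k:S_k(A)\neq 0\}$ (this identity is false for general non-symmetric or non-diagonalizable matrices, so flagging symmetry here is essential); then \eqref{eq:gencyc} gives $S_k(A)=0$ whenever $k$ exceeds the maximum generalized-cycle order $r$, because the sum is empty. The lower bound hinges on your non-cancellation claim, and your bookkeeping is right: in the monomial attached to $\CC\in\cyc_r(G)$, an edge component $\{v_i,v_j\}$ contributes $a_{ij}^2$ (since $\pi_\CC$ transposes its endpoints and $a_{ij}=a_{ji}$), cycle edges contribute degree-one variables, each variable appears to power at most $2$ because the components of $\CC$ are vertex-disjoint, and the degree-one edges form a vertex-disjoint union of cycles that decomposes uniquely into its connected components; so $\CC$ is reconstructible from its monomial, each monomial occurring in $S_r$ has coefficient $\pm 2^{\nc(\CC)}\neq 0$, and $S_r$ is a nonzero polynomial in the edge weights. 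A nonzero real polynomial cannot vanish on the nonempty open set where all edge weights are nonzero, so some $A\in\symo(G)$ has $S_r(A)\neq 0$ and hence $\rank A=r$, giving $\MRo(G)\ge r$. (The edgeless graph is a trivial degenerate case: $\symo(\OL{K_n})=\{O\}$ and there are no generalized cycles.) Note that your argument is the maximum-rank analogue of, and actually proves, the observation the paper records right after \eqref{eq:gencyc} about $\mro(G)$; it is almost certainly the same circle of ideas used in \cite{mr0}.
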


Maximum nullity, minimum number of distinct eigenvalues, and ordered multiplicity lists all provide information that can in some cases be used to solve the inverse eigenvalue problem for a specific graph or family of graphs. Recently   the Strong Spectral Property (SSP) and the Strong Multiplicity Property (SMP) were introduced in \cite{genSAP}. These  tools, which extend the Strong Arnold Property for nullity, have been the focus of much recent research in the IEP-$G$.  While we do not provide the details here, a fundamental principle of a strong property is subgraph monotonicity.   In the case of the Strong Arnold Property, this means that the existence of a matrix  $A\in \sym(G)$ that has the Strong Arnold Property implies the existence of $B\in\sym(H)$ with $\nul B\ge \nul A$ for any graph $H$ having $G$ as a subgraph.  For the Strong Spectral Property,  if $A\in \symo(G)$ has this property and $G$ is a subgraph of $H$, then there is a matrix $B\in\symo(H)$ such that $\spec(A)\subseteq\spec(B)$.
This naturally raises the question of the existence of strong properties for hollow matrices. Unfortunately,  hollow matrices do not seem well suited to strong properties, as we show in the next example. 

\begin{ex}\label{ex:no-hstrong}
For any graph  $G$ of order $n$, $G\circ K_1$ is constructed from $G$ by adding a leaf to each vertex of $G$. Note that $\Mo(G\circ K_1)=0$, i.e., $\mult_A(0)=0$ for $A\in\symo(G\circ K_1)$,  because $G\circ K_1$ has a unique generalized cycle of order $2n$.  For example, $\mult_A(0)=n-2$ for $A\in K_{1,n-1}$ and $\mult_B(0)=0$ for $B\in\symo(K_{1,n-1}\circ K_1)$. This implies that there cannot be a nonzero numerical ``strong'' property below $\Mo$  that is induced subgraph monotone, and there cannot be a strong spectral property for hollow matrices.
\end{ex}

 The general failure of strong properties for hollow matrices is not surprising, because the proofs of subgraph monotonicity for strong properties are based on small perturbations, using the fact that for a sufficiently small perturbation a nonzero entry remains nonzero but a zero entry can be changed to nonzero.  However, some perturbation techniques can still be used, as in the proof of  Proposition \ref{p:allsimp}, where we show the ordered multiplicity list with all entries equal to one can alwaysde be realized.


\section{Preliminary results}\label{s:prelim}

In this section we present examples and  general results about the HIEP-G and related parameters.

\begin{prop}\label{q02specarb}
Let $G$ be a graph of order $n\ge 2$.  If $(m_1,m_2)$ is an ordered multiplicity list of $G$, then $(m_1,m_2)$ is hollow spectrally arbitrary.  
\end{prop}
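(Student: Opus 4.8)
The plan is to split the claim into a ``spectrally arbitrary'' part, which turns out to be free, and a single existence statement. First observe that the pairs $\mu_1<\mu_2$ satisfying the trace constraint \eqref{trace-eq} with multiplicities $(m_1,m_2)$ are exactly $\mu_1=-m_2 s$ and $\mu_2=m_1 s$ for $s>0$: indeed $m_1(-m_2 s)+m_2(m_1 s)=0$, and $\mu_1<\mu_2$ iff $s>0$. Since multiplying a matrix of $\symo(G)$ by a positive scalar keeps it in $\symo(G)$ and scales all eigenvalues equally, one hollow matrix $B_0\in\symo(G)$ with $\spec(B_0)=\{(-m_2)^{(m_1)},m_1^{(m_2)}\}$ produces, via $sB_0$, a realization for every admissible spectrum. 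Thus it suffices to exhibit a single $B_0\in\symo(G)$ whose ordered multiplicity list is $(m_1,m_2)$; this simultaneously certifies $(m_1,m_2)\in\omlo(G)$ and that the list is hollow spectrally arbitrary.

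Next I would convert the existence of $B_0$ into a statement about projections. Write the given $A\in\sym(G)$ with eigenvalues $\lambda_1<\lambda_2$ as $A=\lambda_2 I-(\lambda_2-\lambda_1)P$, where $P$ is the orthogonal projection onto the $\lambda_1$-eigenspace; then $\rank P=m_1$ and, because $\lambda_1\ne\lambda_2$, the off-diagonal support of $P$ is exactly $E(G)$. Conversely, any hollow matrix with the two eigenvalues $-m_2<m_1$ and multiplicities $(m_1,m_2)$ equals $m_1 I-nQ$ for a rank-$m_1$ projection $Q$, and the hollow condition $m_1-nQ_{ii}=0$ forces $Q_{ii}=m_1/n$ for all $i$ (consistent with $\tr Q=m_1$). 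Hence everything reduces to the following: given a rank-$m_1$ orthogonal projection whose off-diagonal support is $E(G)$, produce one, $Q$, of the same rank and support but with constant diagonal $m_1/n$; then $B_0=m_1 I-nQ$ is hollow, lies in $\symo(G)$, and has spectrum $\{(-m_2)^{(m_1)},m_1^{(m_2)}\}$.

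The hard part is this last reduction: flattening the diagonal of a projection without disturbing which off-diagonal entries are forced to vanish. I would work inside the compact set of rank-$m_1$ projections whose entries vanish at every non-edge of $G$ (edge entries unconstrained), which is nonempty as it contains $P$, and minimize the diagonal variance $f(Q)=\sum_i (Q_{ii}-m_1/n)^2$. Tangent motions are $\dot Q=\Omega Q-Q\Omega$ for skew-symmetric $\Omega$ subject to the linear non-edge constraints, giving $\dot Q_{ii}=2\sum_k Q_{ik}\Omega_{ik}$ and hence $\dot f=4\sum_{i<j}(Q_{ii}-Q_{jj})Q_{ij}\Omega_{ij}$, a sum supported on edges. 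The key technical lemma will be that whenever the diagonal is non-constant there is an admissible $\Omega$ with $\dot f<0$; otherwise $(Q_{ii}-Q_{jj})Q_{ij}=0$ on every edge, and since $Q_{ij}\ne 0$ there, this forces $Q_{ii}=Q_{jj}$ across every edge, hence a constant diagonal because $G$ is connected. Driving $f$ to its minimum then yields constant diagonal, after which a small perturbation within the constant-diagonal projections restores any edge entry that may have vanished, since nonvanishing of an edge entry is an open condition.

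I expect the principal obstacle to be exactly this descent lemma. The non-edge constraints couple the entries of $\Omega$, so one cannot freely prescribe $\Omega_{ij}$ on the edges to force $\dot f<0$; the real work is to show the constrained linear system still admits a descent direction, and to control the edge entries so the support remains precisely $E(G)$ throughout. Connectivity of $G$ is essential here (it is what upgrades ``equal across each edge'' to ``globally constant''); for a disconnected $G$ the statement in fact requires each component to carry multiplicities in the same ratio as $(m_1,m_2)$, so I would reduce to the connected case at the outset.
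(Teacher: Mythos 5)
Your opening paragraph is, in substance, the paper's entire proof: once a single hollow realization $B_0\in\symo(G)$ of $(m_1,m_2)$ is in hand, positive scaling produces every admissible spectrum $\{(-m_2s)^{(m_1)},(m_1s)^{(m_2)}\}$, and nothing more is required. The gap is everything after that: you have misread the hypothesis. The notion ``hollow spectrally arbitrary'' is only defined for lists in $\omlo(G)$, and the proposition is invoked throughout the paper (Example \ref{q0-Kn}, Proposition \ref{q02delv}, Proposition \ref{q0-wheel}) precisely when a matrix in $\symo(G)$ realizing $(m_1,m_2)$ is already known to exist; that hollow realization is the hypothesis, not something to be constructed. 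You instead assume only $(m_1,m_2)\in\oml(G)$, i.e., a realization by a matrix $A\in\sym(G)$ with unconstrained diagonal, and devote the rest of the argument to manufacturing a hollow realization from it.

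That stronger statement is false, so no repair of the projection-flattening program can succeed. The paper itself contains the counterexample: $q(W_5)=2$, and since $W_5\ne K_5$ forces $\M(W_5)\le 3$, we get $(3,2)\in\oml(W_5)$; yet Proposition \ref{q0-wheel} shows $q_0(W_5)=3$, so $(3,2)\notin\omlo(W_5)$. In your language: there is a rank-$3$ orthogonal projection whose off-diagonal support is exactly $E(W_5)$, but none with constant diagonal, so the minimum of your variance functional $f$ is strictly positive. The technical failure is located exactly where you anticipated trouble, but it is not repairable: the compact set you minimize over only forces non-edge entries to vanish, so at a minimizer the \emph{edge} entries of $Q$ may be zero, and the implication ``$(Q_{ii}-Q_{jj})Q_{ij}=0$ on edges $\Rightarrow$ diagonal constant'' collapses; moreover, the non-edge constraints on $\Omega$ prevent you from prescribing $\Omega_{ij}$ freely on edges, so stationarity does not even yield $(Q_{ii}-Q_{jj})Q_{ij}=0$. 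Your final step --- perturbing within constant-diagonal projections to restore vanished edge entries --- is likewise not an openness argument (you must create nonzeros, not preserve them), and it is exactly the kind of ``strong property'' maneuver the paper shows fails for hollow matrices in Example \ref{ex:no-hstrong}.
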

\bpf When there are only two distinct eigenvalues,  the trace constraint   \eqref{trace-eq}  implies the spectrum is determined by one eigenvalue and its multiplicity.  That is, for $\spec(A)=\{\mu_1^{(k)},\mu_2^{(n-k)}\}$, $\mu_2=-\frac k{n-k}\mu_1$ (observe that both $\mu_1$ and $\mu_2$ are nonzero).  Given that one specific spectrum $\spec(A')=\{{\mu'}_1^{(k)},{\mu'}_2^{(n-k)}\}$ is realized by $A'\in\symo(G)$, the matrix $A=\frac{\mu_1}{{\mu'}_1} A'$ has $\spec(A)=\{\mu_1^{(k)},\mu_2^{(n-k)}\}$.
\epf

\begin{ex}\label{q0-Kn}
Note that $\spec(\A(K_n))=\{n-1,(-1)^{(n-1)}\}$. By considering this matrix and its negative, we see that $q_0(K_n)=2$  and $(n-1,1), (1,n-1)\in\omlo(K_n)$ for $n\ge 2$.  By Proposition \ref{q02specarb}, $(n-1,a)$ and $(1,n-1)$ are hollow spectrally arbitrary.
\end{ex}

\begin{rem}\label{o:dunion}
If $G=G_1\dcup G_2$, then $\spec(G)=\spec(G_1)\cup\spec(G_2)$ (where the union of spectra is a multiset union).  Thus, it is common to focus on connected graphs when studying the full HIEP-$G$.  However, it is not necessarily easy to determine $q_0(G_1\dcup G_2)$ from $q_0(G_1)$ and $q_0(G_2)$ (see Section \ref{ss:q0}). 
\end{rem}

The \emph{maximum semidefinite nullity} of a graph $G$ is \[\M_+(G)=\max\{\nul A : A\in\sym(G)\mbox{ and $A$ is positive semidefinite}\}.\] Let $A$ be a positive semidefinite matrix of rank $d$.  Then there is an $d\x n $ matrix $R=[\br_1,\dots,\br_n]$ such that $A=R^TR$. If in addition $A\in\sym(G)$, then the column vectors $\br_i$ are called an \emph{orthogonal representation} of $G$ of dimension $d$. 
It is well-known that the least  $d$ such that $G$ has an orthgonal representation of dimension $d$ is $d=n-\M_+(G)$ for a graph $G$ of order $n$.
\begin{lem}\label{r:m1} Let $G$ be a graph with no isolated vertices. 
Then  there exists a  matrix $B\in\symo(G)$ such that $m_1(B)=\M_+(G)$.
\end{lem}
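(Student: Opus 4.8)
The plan is to start from a positive semidefinite witness for $\M_+(G)$ and deform it into a hollow matrix without disturbing either its off-diagonal zero--nonzero pattern or the multiplicity of its least eigenvalue. By the definition of $\M_+(G)$ there is a positive semidefinite $A\in\sym(G)$ with $\nul A=\M_+(G)$. Since $A$ is positive semidefinite, $0$ is its least eigenvalue and it occurs with multiplicity $\nul A=\M_+(G)$; the whole construction is engineered to carry this least-eigenvalue multiplicity through to a hollow matrix.

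First I would argue that every diagonal entry of $A$ is strictly positive. This is exactly where the hypothesis that $G$ has no isolated vertices enters: for a positive semidefinite matrix, $a_{ii}=0$ forces the $2\times 2$ principal submatrix in rows and columns $i,j$ to have nonpositive determinant $-a_{ij}^2$, hence $a_{ij}=0$ for all $j$, so the entire $i$th row and column vanish and $v_i$ is isolated in $G$. Thus $D:=\diag(a_{11},\dots,a_{nn})$ is a positive definite diagonal matrix.

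Next I would normalize the diagonal by the congruence $A'=D^{-1/2}AD^{-1/2}$. Congruence by the positive diagonal matrix $D^{-1/2}$ scales each off-diagonal entry $a_{ij}$ by the positive factor $(a_{ii}a_{jj})^{-1/2}$, so the zero--nonzero pattern is preserved and $A'\in\sym(G)$; it keeps $A'$ positive semidefinite of the same rank, so $\nul A'=\M_+(G)$; and it makes every diagonal entry of $A'$ equal to $1$. Finally I would set $B=A'-I$. Then $B$ is hollow and shares the off-diagonal entries of $A'$, so $B\in\symo(G)$, while $\spec(B)=\spec(A')-1$. Consequently the least eigenvalue $0$ of $A'$, of multiplicity $\M_+(G)$, becomes the least eigenvalue $-1$ of $B$ with the same multiplicity, giving $m_1(B)=\M_+(G)$ as required.

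There is essentially no hard calculation here, and the only point needing care is the strict positivity of the diagonal of $A$, which is precisely the step that consumes the no-isolated-vertices hypothesis. In the orthogonal-representation language of the preamble the same construction reads as rescaling each representing vector $\br_i$ to unit length (possible because $\br_i\ne\bzero$ whenever $v_i$ is not isolated) and then subtracting $I$ from the resulting Gram matrix.
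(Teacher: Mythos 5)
Your proof is correct and is essentially the same as the paper's: both take a positive semidefinite witness for $\M_+(G)$, normalize its diagonal to all ones by a positive diagonal congruence (the paper writes this via the Gram factorization $A=R^TR$ and scaling by $1/\|\br_i\|$, which is exactly your $D^{-1/2}AD^{-1/2}$), and then subtract $I$ to get $B\in\symo(G)$ with least eigenvalue $-1$ of multiplicity $\M_+(G)$. The only difference is cosmetic: you spell out explicitly why the no-isolated-vertices hypothesis forces the diagonal entries (equivalently, the vectors $\br_i$) to be nonzero, a point the paper leaves implicit.
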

\bpf Let $m=\M_+(G)$. Choose a  positive semidefinite matrix $A\in\sym(G)$ such that $\nul A=m$. Then there is an $(n-m)\x n $ matrix $R=[\br_1,\dots,\br_n]$ such that $A=R^TR$.  Define $A'=D^T AD$ where $D=\diag\lp\frac{1}{\|\br_1\|},\dots,\frac{1}{\|\br_n\|}\rp$.  Observe that every diagonal entry of $A'$ is 1, and $\nul A'=m$.  Define $B=A'-I$.  Note that $B\in\symo(G)$, $\mu_1(B)=-1$, and $m_1(B)=m$. 
\epf   

Note that the process of constructing $B$ in Lemma \ref{r:m1}  involves conjugation, which preserves the nullity but not the spectrum.  Typically when the conjugation process is applied to a matrix, the  other eigenvalues are perturbed, and multiple eigenvalues may split into several  simple eigenvalues.  However, Lemma \ref{r:m1} can be leveraged to determine other multiplicities for certain ordered multiplicity lists of bipartite graphs (see Corollary \ref{bip-k1k}). 


\subsection{$q_0(G)$}\label{ss:q0}
In this section we focus on the minimum number of distinct eigenvalues of hollow matrices described by a graph.  

\begin{obs} For any graph $G$, $q(G)\le q_0(G)$. It is known 
 that $q(G)=1$ if and only if $G$ has no edges {\rm \cite{AACFMN}},  
  so $q_0(G)=1$ if and only if $G$ has no edges.  For any graph $G$ of order $n$, $q_o(G)\ge \frac n{\maxmulto(G)}$.
\end{obs}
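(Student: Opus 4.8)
The statement bundles three assertions, which I would prove in sequence. The first, $q(G)\le q_0(G)$, is immediate from set containment: every hollow matrix described by $G$ is in particular a symmetric matrix described by $G$, so $\symo(G)\subseteq\sym(G)$, and minimizing $q(A)$ over the smaller set can only increase the value. Throughout I would note that $\symo(G)$ is nonempty, since it contains $\A(G)$, so that all the minima defining $q(G)$ and $q_0(G)$ are well-defined.

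For the second assertion, that $q_0(G)=1$ holds if and only if $G$ has no edges, I would argue the two directions separately. If $G$ has no edges, then any $A\in\symo(G)$ has all off-diagonal entries zero by the edge condition and all diagonal entries zero by hollowness, so $A=0$; this matrix has the single eigenvalue $0$, giving $q_0(G)=1$. Conversely, suppose $q_0(G)=1$. Combining $q(G)\le q_0(G)=1$ with the trivial bound $q(G)\ge 1$ forces $q(G)=1$, and then the cited characterization \cite{AACFMN} of graphs with a single distinct eigenvalue forces $G$ to have no edges.

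The third assertion, $q_0(G)\ge n/\maxmulto(G)$, I would obtain by a pigeonhole count. For any $A\in\symo(G)$ the multiplicities of its distinct eigenvalues sum to $n$, and each is at most $\maxmulto(G)$ by definition of the maximum hollow multiplicity, so \[ n=\sum_{i=1}^{q(A)} m_i(A)\le q(A)\,\maxmulto(G), \] which gives $q(A)\ge n/\maxmulto(G)$. Taking the minimum over all $A\in\symo(G)$ yields the stated bound. None of these steps is a genuine obstacle; the only point requiring care is that the forward direction of the second assertion is not self-contained, as it relies on both the first assertion and the external result of \cite{AACFMN}, so I would make sure those are in place before invoking it.
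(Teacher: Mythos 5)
Your proposal is correct and matches the reasoning the paper leaves implicit in this Observation: the containment $\symo(G)\subseteq\sym(G)$ gives $q(G)\le q_0(G)$, the edgeless case follows from the cited result of \cite{AACFMN} together with the fact that the zero matrix is the only hollow matrix of an edgeless graph, and the last bound is the same pigeonhole count on multiplicities. Your added care about nonemptiness of $\symo(G)$ and the explicit two-direction split for the second assertion are fine elaborations of the same argument.
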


The ease of combining spectra does not apply when determining  the minimum number of distinct eigenvalues of a disjoint union, because the spectra may or may not align. Proposition \ref{K3duKr} shows that $q_0(G\dcup H) > \max\{q_0(G), q_0(H)\}$ is possible.  Examples where $q_0(G\dcup H) = \max\{q_0(G), q_0(H)\}$ are easily constructed when ordered multiplicity lists are  arbitrary within shared constraints, such as spectral symmetry about the origin. 

\begin{prop}\label{K3duKr} If $r$ is not a multiple of $3$, then $q_0(K_3\dcup K_r)=3$. 
\end{prop}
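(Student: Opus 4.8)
The plan is to prove $q_0(K_3\dcup K_r)\le 3$ and $q_0(K_3\dcup K_r)\ge 3$ separately, with essentially all of the content living in the lower bound. For the upper bound I would simply exhibit one matrix: taking $A=\A(K_3)\oplus\A(K_r)$, a direct sum of adjacency matrices, gives a hollow matrix in $\symo(K_3\dcup K_r)$ whose spectrum is the multiset union $\{2,(-1)^{(2)}\}\cup\{(r-1),(-1)^{(r-1)}\}$, so its distinct eigenvalues lie in $\{-1,2,r-1\}$. Hence $q(A)\le 3$ and $q_0(K_3\dcup K_r)\le 3$, with no use yet of the divisibility hypothesis.

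For the lower bound I would argue by contradiction, assuming some $A\in\symo(K_3\dcup K_r)$ has exactly two distinct eigenvalues $\mu_1<\mu_2$. Because the graph is disconnected, $A=A_1\oplus A_2$ with $A_1\in\symo(K_3)$ and $A_2\in\symo(K_r)$, and $\spec(A)$ is the multiset union of $\spec(A_1)$ and $\spec(A_2)$ (Remark \ref{o:dunion}). Each block is hollow with all off-diagonal entries nonzero; a hollow matrix with only one eigenvalue would be a scalar matrix, hence zero, so each block has at least two distinct eigenvalues. Since the union has only two, both blocks must have spectrum exactly $\{\mu_1,\mu_2\}$. I would also record that $\mu_1,\mu_2\ne 0$: each block is hollow and therefore traceless, and together with $\mu_1<\mu_2$ this rules out either value being zero.

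The crux is to combine the two trace constraints. Writing the multiplicities of $\mu_1,\mu_2$ in $A_1$ as $(p,3-p)$ and in $A_2$ as $(a,r-a)$, hollowness forces $\tr A_1=p\mu_1+(3-p)\mu_2=0$ and $\tr A_2=a\mu_1+(r-a)\mu_2=0$. Since $A_1$ is $3\times 3$ with two distinct eigenvalues we have $p\in\{1,2\}$, so $\mu_2/\mu_1=-p/(3-p)\in\{-2,-\tfrac12\}$. Equating this with $\mu_2/\mu_1=-a/(r-a)$ from the second block gives $a/(r-a)\in\{2,\tfrac12\}$, and solving either case for $r$ yields $r=3a$ or $r=\tfrac32 a$; in both cases $r$ is a multiple of $3$. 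This contradicts the hypothesis, so no such $A$ exists, $q_0(K_3\dcup K_r)\ge 3$, and together with the upper bound we get equality.

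The main obstacle is conceptual rather than computational: recognizing that the decisive constraint is tracelessness applied to each block \emph{separately} (both $A_1$ and $A_2$ are individually hollow), not to $A$ as a whole, since the overall trace is automatically zero and yields nothing. Once this is isolated, the argument reduces to the elementary fact that $K_3$ can split its two eigenvalues only in the ratio $2:1$, which is incompatible with balancing a traceless $K_r$-block unless $3\mid r$. Finally I would check the degenerate case $r=1$, where $A_2=[0]$ contributes the eigenvalue $0$; since $0$ is never an eigenvalue of a matrix in $\symo(K_3)$ (its determinant equals $2abc\ne 0$), the union again has at least three distinct eigenvalues, so the claim holds there as well.
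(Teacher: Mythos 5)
Your proof is correct and takes essentially the same approach as the paper: decompose $A=A_1\oplus A_2$, note that each block must have exactly two distinct eigenvalues forming the same pair $\{\mu_1,\mu_2\}$, and apply the trace constraint to each block separately so that the $2{:}1$ eigenvalue ratio forced by the $K_3$ block is compatible with the $K_r$ block only when $3\mid r$. You are in fact slightly more complete than the paper, which leaves the upper bound $q_0(K_3\dcup K_r)\le 3$ implicit and does not separately treat the degenerate case $r=1$ (where $q(A_2)=1$ falls outside its case analysis).
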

\bpf The matrix $A\in\symo(K_3\dcup K_r)$ is of the form $A=A_3\oplus A_r$, where $A_3\in\symo(K_3)$ and $A_r\in\symo(K_r)$. Thus, $\spec(A)=\spec(A_3)\cup\spec(A_r)$. If $q(A_3)\ge 3$  or $q(A_r)\ge 3$, then $q(A)\ge 3$.  So suppose $q(A_3)=2$ and $q(A_r)=2$. Observe that  $\spec(A_3)=\{\lambda^{(2)},-2\lambda\}$ for some $\lambda$ and $\spec(A_r)=\{\mu^{(k)},-\frac k {r-k}\mu^{(r-k)}\}$ for some $\mu$.  To have $q(A)=2$, necessarily (i) $\lambda=\mu$ and $2\lambda=\frac k {r-k}\mu$, or (ii) $-2\lambda=\mu$ and $\lambda=-\frac k {r-k}\mu$.  Then  $2r=3k$ in case (i) and $r=3k$ in case (ii).  In either case, $q(A)=2$ implies $r$ is a multiple of $3$.
\epf

The proof of the next result is the same as the proof of \cite[Proposition 2.5]{AACFMN}.
\begin{prop} For any graph, $q_0(G)\le \mro(G)+1$.
\end{prop}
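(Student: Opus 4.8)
The plan is to mimic the standard minimum-rank-bounds-distinct-eigenvalues argument in the hollow setting, and the key observation is that a low-rank matrix is forced to have the eigenvalue $0$ with high multiplicity, which collapses many of the potential distinct eigenvalues into a single one.

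First I would select a witness for the minimum hollow rank: let $r=\mro(G)$ and choose $A\in\symo(G)$ with $\rank A=r$, where $n=|V(G)|$. Since $A$ is symmetric of rank $r$, exactly $r$ of its eigenvalues (counted with multiplicity) are nonzero, so $A$ has at most $r$ \emph{distinct} nonzero eigenvalues. The remaining eigenvalues are all equal to $0$.

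Next I would split into the two cases according to whether $r<n$ or $r=n$. If $r<n$, then $0$ is an eigenvalue of $A$ with multiplicity $n-r\ge 1$, contributing exactly one additional distinct eigenvalue beyond the at most $r$ nonzero ones; hence $q(A)\le r+1$. If $r=n$, then $A$ has at most $n$ distinct eigenvalues in total, so $q(A)\le n=r<r+1$. In either case $q(A)\le r+1$. Since $q_0(G)=\min\{q(A'):A'\in\symo(G)\}\le q(A)$, we conclude $q_0(G)\le r+1=\mro(G)+1$.

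There is no serious obstacle here: the argument is a direct eigenvalue count, and the only point requiring minor care is the boundary case $r=n$, where $0$ need not be an eigenvalue and the bound must instead be obtained from the trivial estimate $q(A)\le n$. The entire proof relies only on the identity $\rank A+\nul A=n$ for symmetric $A$ together with the definitions of $q_0$ and $\mro$, exactly paralleling the cited argument for $q(G)\le\mr(G)+1$.
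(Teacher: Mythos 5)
Your proof is correct and is essentially the paper's argument: pick $A\in\symo(G)$ of rank $\mro(G)$, note it has at most $\mro(G)$ distinct nonzero eigenvalues, and allow one more distinct eigenvalue for $0$. Your explicit case split on $r=n$ versus $r<n$ is just a more careful spelling-out of the same count; the paper compresses it into one line.
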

\bpf Choose $A\in\symo(G)$ with $\rank A=\mro(G)$. Then $A$ has at most $\mro(G)$ nonzero eigenvalues and $\mro(G)+1$ distinct eigenvalues.
\epf

\begin{prop}\label{zero-eval} Let $G$ be a graph of order $n$ that is not the empty graph such that $\MRo(G)<n$.  Then $q_0(G)\ge 3$. 
\end{prop}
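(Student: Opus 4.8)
The plan is to combine the maximum-rank hypothesis with the trace constraint; both facts are already available from the excerpt. First I would unpack what $\MRo(G)<n$ says. Since $\MRo(G)=\max\{\rank A: A\in\symo(G)\}$, the hypothesis means that \emph{every} matrix $A\in\symo(G)$ has $\rank A\le\MRo(G)<n$, and hence every such $A$ is singular. In other words, $0$ is an eigenvalue of every hollow matrix described by $G$. This is the crucial observation, and the one place where it matters that the hypothesis concerns the maximum rather than the minimum rank: it constrains all of $\symo(G)$ simultaneously, not just a single extremal matrix.

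Next I would record the easy lower bound. Because $G$ is not the empty graph it has at least one edge, so by the Observation preceding Proposition \ref{K3duKr} we have $q_0(G)\ge 2$. It therefore suffices to rule out the possibility $q_0(G)=2$, and I would do this by contradiction: suppose some $A\in\symo(G)$ has exactly two distinct eigenvalues $\mu_1<\mu_2$ with multiplicities $m_1,m_2\ge 1$. By the first step, $0$ must be one of $\mu_1,\mu_2$. Now I apply the trace constraint $m_1\mu_1+m_2\mu_2=0$ from \eqref{trace-eq}: if $\mu_1=0$ then $m_2\mu_2=0$ forces $\mu_2=0$ (as $m_2>0$), and symmetrically if $\mu_2=0$ then $\mu_1=0$. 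Either way the two eigenvalues coincide, contradicting that they are distinct. Hence $q_0(G)\ne 2$, and combined with $q_0(G)\ge 2$ this gives $q_0(G)\ge 3$.

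There is no real obstacle here beyond assembling these two ingredients correctly; the argument is short. The only subtlety worth stating explicitly is the first step — that $\MRo(G)<n$ pins $0$ into the spectrum of every admissible matrix — since after that the trace constraint does all the work, exactly as it rules out a zero eigenvalue coexisting with only one nonzero distinct eigenvalue. I would not invoke Theorem \ref{maxrank} (the generalized-cycle description of $\MRo$) at all; the numerical hypothesis $\MRo(G)<n$ is used directly.
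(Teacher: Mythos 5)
Your proof is correct and uses the same two ingredients as the paper's: $\MRo(G)<n$ forces $0\in\spec(A)$ for every $A\in\symo(G)$, and the zero-trace condition on a nonzero matrix supplies the remaining distinct eigenvalues. The only difference is presentational — the paper argues directly that $\tr A=0$ and $A\ne O$ give one positive and one negative eigenvalue alongside $0$, while you rule out $q(A)=2$ by contradiction via the trace constraint — so this is essentially the same proof.
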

\bpf Let $A\in\symo(G)$.  Since $G\ne \OL{K_n}$, $A$ has  at least 1 positive and at least 1 negative eigenvalue.  Since $\MRo(G)<n$, $0\in\spec(A)$.  Thus $q(A)\ge 3$.
\epf

\begin{prop}\label{q02delv}
Let $G$ be a graph of order $n\ge 4$ such that $G$ has the ordered multiplicity list $(r,n-r)$ with $2\le r\le n-2$. 
For every vertex $v \in V(G - v)$, there exists $B \in \symo(G - v)$ such that $
\spec(B) = \{ (-\frac{n-r}{r})^{(r-1)}, -\frac{n-2r}{r},1^{(n-r-1)}
\}$. 
\end{prop}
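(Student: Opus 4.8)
The plan is to realize the target spectrum of $B$ as that of a principal submatrix of a carefully chosen matrix $A\in\symo(G)$, using Cauchy interlacing to pin down all but one eigenvalue and the trace constraint to pin down the last. First, since $(r,n-r)\in\oml(G)$, Proposition \ref{q02specarb} tells us this list is hollow spectrally arbitrary, so I am free to select a convenient pair of eigenvalues satisfying the trace constraint. I fix $A\in\symo(G)$ with $\spec(A)=\{(-\frac{n-r}{r})^{(r)},1^{(n-r)}\}$; the trace constraint $r\cdot(-\frac{n-r}{r})+(n-r)\cdot 1=0$ holds, so by Proposition \ref{q02specarb} such an $A$ exists.

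Next, for any vertex $v$ of $G$, let $B$ be the principal submatrix of $A$ obtained by deleting the row and column indexed by $v$. Since $A$ is hollow and described by $G$, the submatrix $B$ is hollow and described by the induced subgraph $G-v$, so $B\in\symo(G-v)$ as required. Cauchy interlacing applied to the eigenvalues of $A$ listed in nondecreasing order (namely $-\frac{n-r}{r}$ with multiplicity $r$ followed by $1$ with multiplicity $n-r$) forces $B$ to have $-\frac{n-r}{r}$ with multiplicity at least $r-1$ and $1$ with multiplicity at least $n-r-1$. This accounts for $n-2$ of the $n-1$ eigenvalues of $B$, leaving a single remaining eigenvalue $\nu$ confined to the interval $\lb -\frac{n-r}{r},\,1\rb$.

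Finally, I determine $\nu$ from the trace. Because $B$ is hollow, $\tr B=0$, so the eigenvalues of $B$ sum to zero:
\[
(r-1)\left(-\frac{n-r}{r}\right)+\nu+(n-r-1)(1)=0.
\]
Solving yields $\nu=-\frac{n-2r}{r}$, precisely the claimed value. The hypotheses $2\le r\le n-2$ guarantee that $\nu$ differs from both $-\frac{n-r}{r}$ and $1$ (equality with $1$ would force $r=n$, and equality with $-\frac{n-r}{r}$ would force $r=0$), so all three eigenvalues are genuinely distinct and the multiplicities are exactly $r-1$, $1$, and $n-r-1$. Since both the interlacing bookkeeping and the trace computation are independent of which vertex $v$ is deleted, the same spectrum is obtained for every $v$.

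I do not anticipate a serious obstacle here: the argument is a clean combination of spectral arbitrariness, interlacing, and the trace constraint. The only point needing care is the interlacing step, where one must verify that the repeated eigenvalues of $A$ force \emph{exactly} $r-1$ and $n-r-1$ copies in $B$ (rather than merely at least that many), which follows once the eigenvalues of $A$ are ordered and the interlacing inequalities are read off on both sides.
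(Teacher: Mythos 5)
Your proposal is correct and follows essentially the same route as the paper's proof: fix $A\in\symo(G)$ with $\spec(A)=\{(-\frac{n-r}{r})^{(r)},1^{(n-r)}\}$ via Proposition \ref{q02specarb}, delete $v$, pin down $n-2$ eigenvalues of $A(v)$ by Cauchy interlacing, and recover the last one from the trace constraint. Your added check that $-\frac{n-2r}{r}$ is distinct from the other two eigenvalues is a small refinement the paper leaves implicit, but otherwise the arguments coincide.
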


\begin{proof} There is a matrix $ A \in \symo(G) $ such that $\spec(A) = \{
(-\frac{n-r}r)^{(r)}, 1^{(n-r)} \} $ by  Proposition \ref{q02specarb}. 
Let $B = A(v)$. Then $B \in \symo(H)$. Define the eigenvalues
of $B$ to be $\mu_1 \le \mu_2\le \dots\le \mu_{n-1}$. By interlacing,   
\[
-\frac{n-r}{r} \leq \mu_1 \leq \dots \leq \mu_{r-1}  \leq -\frac{n-r}{r} \leq
\mu_r \leq 1 \leq \mu_{r+1} \leq \dots \leq \mu_{n-1} \leq 1.
\]
Hence $\mu_1 = \mu_2 = \cdots = \mu_{r-1} = -\frac{n-r}{r}$ and $ \mu_{r+1} =
\cdots = \mu_{n-1} = 1$. Thus $(n-r-1)-\frac{n-r}{r} ( r-1 ) +\mu_r = 0$ which implies 
$\mu_r = -(n-r-1) + (r-1)\lp\frac{n-r}{r}\rp =   \frac{2r-n}{r}$.
\end{proof}

\begin{cor}
Let $G$ be a graph of order $n\ge 3$ such that $q_0(G) = 2$. Then $q_0(G - v ) \leq 3
$.  If $\maxmulto(G) = n - 1$, then $q_0(G - v ) = 2$.
\end{cor}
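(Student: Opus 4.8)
The plan is to fix a matrix $A \in \symo(G)$ attaining $q_0(G) = 2$, write its ordered multiplicity list as $(r,n-r)$ with $1 \le r \le n-1$, and split into the \emph{interior} range $2 \le r \le n-2$ and the \emph{boundary} values $r \in \{1,n-1\}$. Since $-A \in \symo(G)$ has the reversed ordered multiplicity list $(n-r,r)$, and negation preserves the number of distinct eigenvalues of any principal submatrix, I may replace $A$ by $-A$ to normalize the boundary case to $r=1$. For any vertex $v$ the submatrix $B = A(v)$ lies in $\symo(G-v)$, so bounding $q(B)$ from above bounds $q_0(G-v)$ from above.

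For the interior range I would invoke Proposition \ref{q02delv} directly: since $2 \le r \le n-2$ already forces $n \ge 4$, the proposition produces $B \in \symo(G-v)$ with spectrum $\{(-\frac{n-r}{r})^{(r-1)}, -\frac{n-2r}{r}, 1^{(n-r-1)}\}$. A short check shows the three listed values are pairwise distinct precisely when $2 \le r \le n-2$, and that their multiplicities $r-1$, $1$, and $n-r-1$ are all positive in this range; hence $q(B)=3$ and $q_0(G-v)\le 3$.

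For the boundary case (and for the second assertion) I would argue directly via Cauchy interlacing. With list $(1,n-1)$ the eigenvalues of $A$ are $\mu_1 < \mu_2$ with $\mu_2$ of multiplicity $n-1$, and interlacing forces $n-2$ of the eigenvalues of $B=A(v)$ to equal $\mu_2$, leaving a single remaining eigenvalue $\nu$ pinned by the hollow trace constraint to $\nu = -(n-2)\mu_2$. The key point is that $\nu \neq \mu_2$: equality would give $(n-1)\mu_2 = 0$, hence $\mu_2 = 0$ and then $\mu_1 = 0$ by the trace constraint on $A$, contradicting $\mu_1 \neq \mu_2$. Thus $q(B)=2$, so $q_0(G-v)\le 2 \le 3$, completing the first assertion. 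For the second assertion, $\maxmulto(G)=n-1$ supplies a matrix whose ordered multiplicity list is $(1,n-1)$ (after negating if needed), so the same interlacing argument applies; moreover $q(B)=2$ forces $B \neq 0$, so $G-v$ has an edge and $q_0(G-v)\ge 2$, using that $q_0(H)=1$ exactly when $H$ has no edges. This yields $q_0(G-v)=2$.

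The main obstacle is the boundary analysis: Proposition \ref{q02delv} does not cover $r \in \{1,n-1\}$, so I must run the interlacing computation by hand and, crucially, verify that $B$ retains \emph{exactly} two distinct eigenvalues rather than collapsing to one. This is exactly where the trace constraint together with the nonvanishing of the eigenvalues of a two-eigenvalue hollow matrix does the work. Upgrading $q_0(G-v)\le 2$ to equality in the second assertion is then just a matter of excluding the edgeless graph, which is automatic because the interlacing already exhibits a matrix $B \neq 0$ in $\symo(G-v)$.
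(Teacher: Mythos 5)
Your proposal is correct and follows essentially the same route as the paper's proof: the interior lists $(r,n-r)$ with $2\le r\le n-2$ are dispatched by Proposition \ref{q02delv}, and the boundary lists $(1,n-1)$, $(n-1,1)$ by Cauchy interlacing plus the trace constraint, which also gives the equality $q_0(G-v)=2$ when $\maxmulto(G)=n-1$. The only cosmetic differences are that you normalize the boundary case by negation (where the paper says ``analogous'') and absorb $n=3$ into the boundary case (where the paper treats it separately via $G=K_3$), and your explicit observation that $B\ne 0$ forces an edge in $G-v$ is the same point the paper makes by noting that neither eigenvalue of $A(v)$ is zero.
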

\bpf Let  $A\in\symo(G)$ have $\oml(A)=(r,n-r)$.   If $r=1$, then $\oml(A(v))=(1,n-2)$ by interlacing (and neither eigenvalue is $0$ by the trace inequality),  so $q_0(G-v)=2$; the case $r=n-1$ is analogous. If $n=3$, then $q_0(G)=2$ implies $G=K_3$ and $q_0(K_{2})=2$.  So assume  that $n\ge 4$ and $2\le r\le n-r$.  Then $q(A(v))=3$ by Proposition \ref{q02delv} (since $-\frac{n-2r}{r}\ne 1, -\frac{n-r}r$).  Thus $q_0(G)\le 3$.  \epf

Next we establish results that allow extending low values of $q_0(G)$ to low values of $q_0(G\cp K_2)$ when $q_0(G)$ is realized by a matrix whose spectrum is symmetric about the origin, as is the case for bipartite graphs (see Theorem \ref{t:bip-spec-sym}).  

 \begin{lem}\label{l:cartprod} Suppose $G$ is a graph of order $n$ such that there exists $A\in\symo(G)$ with $\spec(A)=\{-\mu_k^{(m_k)},\dots,$ $-\mu_1^{(m_1)},0^{(m_0)}, \mu_1^{(m_1)},\dots,\mu_k^{(m_k)}\}$ where $0<\mu_1<\dots <\mu_k$ and $m_0\ge 0$ (with $m_0=0$ signifying that $0$ is not an eigenvalue of $A$).  Define $B=\mtx{A & I_n\\I_n & -A}$.  Then  $B\in\symo(G\cp K_2)$ and  \[\spec(B)=\lsb-\sqrt{\mu_k^2+1}^{(2m_k)},\dots,-\sqrt{\mu_1^2+1}^{(2m_1)},-1^{(m_0)},1^{(m_0)}, \sqrt{\mu_1^2+1}^{(2m_1)},\dots,\sqrt{\mu_k^2+1}^{(2m_k)}\rsb.\]
\end{lem}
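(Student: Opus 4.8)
The plan is to verify the membership claim from the block structure, and then to diagonalize $B$ by splitting $\R^{2n}$ into two-dimensional $B$-invariant subspaces, one for each eigenvector of $A$.

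First I would check that $B\in\symo(G\cp K_2)$. Recall that $G\cp K_2$ consists of two disjoint copies of $G$ together with a perfect matching joining corresponding vertices. Indexing the rows and columns of $B$ so that the first $n$ correspond to one copy and the last $n$ to the other, the diagonal blocks $A$ and $-A$ realize the two copies of $G$: both lie in $\symo(G)$, since negation preserves both the zero diagonal and the zero--nonzero pattern off the diagonal. The off-diagonal blocks $I_n$ place a nonzero entry exactly between vertex $i$ in one copy and vertex $i$ in the other, which are precisely the matching edges. Since $A$ is hollow, so is $B$, and the nonzero pattern of $B$ matches $E(G\cp K_2)$ exactly; hence $B\in\symo(G\cp K_2)$.

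For the spectrum, the key observation is that if $x$ is an eigenvector of $A$ with $Ax=\lambda x$, then the two-dimensional subspace spanned by $\bmt x\\ \bzero\emt$ and $\bmt \bzero\\ x\emt$ is invariant under $B$, and in this basis $B$ restricts to the $2\x 2$ matrix $\bmt \lambda & 1\\ 1 & -\lambda\emt$, whose trace is $0$ and whose determinant is $-(\lambda^2+1)$, so its eigenvalues are $\pm\sqrt{\lambda^2+1}$. Because $A$ is real symmetric it admits an orthonormal eigenbasis $x_1,\dots,x_n$, and the associated two-dimensional subspaces are mutually orthogonal and together span $\R^{2n}$. Thus $\spec(B)$ is obtained by replacing each eigenvalue $\lambda$ of $A$, counted with multiplicity, by the pair $\pm\sqrt{\lambda^2+1}$.

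It then remains to carry out the multiplicity bookkeeping using the symmetry of $\spec(A)$. Since $\sqrt{\mu_j^2+1}=\sqrt{(-\mu_j)^2+1}$, the two eigenvalues $\mu_j$ and $-\mu_j$ of $A$, each of multiplicity $m_j$, together contribute $2m_j$ copies of $+\sqrt{\mu_j^2+1}$ and $2m_j$ copies of $-\sqrt{\mu_j^2+1}$ to $\spec(B)$; the eigenvalue $0$ of multiplicity $m_0$ contributes $m_0$ copies each of $\pm 1$. Collecting these gives exactly the stated spectrum, and the total multiplicity $4(m_1+\dots+m_k)+2m_0=2n$ checks out. There is no genuine obstacle beyond this bookkeeping; the one point requiring care is that each single $\lambda$ produces both signs $\pm\sqrt{\lambda^2+1}$, so every eigenvalue of $B$ is nonzero and the positive and negative parts are paired, which is the source of the symmetric form of $\spec(B)$. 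As a consistency check, one may note that $B^2=(A^2+I_n)\oplus(A^2+I_n)$, whose eigenvalues are the squares $\mu_j^2+1$ (and $1$) with the predicted doubled multiplicities.
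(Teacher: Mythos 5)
Your proof is correct, but it takes a genuinely different route from the paper. The paper computes the characteristic polynomial of $B$ directly via a block-determinant identity: $\det(xI_{2n}-B)=\det\bigl((xI_n-A)(xI_n+A)-I_n\bigr)=\det\bigl(x^2I_n-(A^2+I_n)\bigr)$, and then reads off $\spec(B)$ from $\spec(A^2+I_n)$; this is very compact but silently invokes the formula $\det\mtx{P & Q\\ R & S}=\det(PS-QR)$, which is valid here only because the relevant blocks commute (the off-diagonal blocks are $-I_n$). You instead decompose $\R^{2n}$ into the $n$ mutually orthogonal two-dimensional $B$-invariant subspaces $\Span\bigl\{\mtx{x_i\\ \bzero},\mtx{\bzero\\ x_i}\bigr\}$ attached to an orthonormal eigenbasis $x_1,\dots,x_n$ of $A$, reducing everything to the $2\x 2$ matrix $\mtx{\lambda & 1\\ 1 & -\lambda}$ with eigenvalues $\pm\sqrt{\lambda^2+1}$. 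Your approach is more elementary (no block-determinant lemma needed), makes the pairing of $+\sqrt{\lambda^2+1}$ with $-\sqrt{\lambda^2+1}$ completely transparent, and moreover produces explicit eigenvectors of $B$, which the paper's argument does not; the paper's approach buys brevity, and its key object $A^2+I_n$ is exactly what you recover in your consistency check $B^2=(A^2+I_n)\oplus(A^2+I_n)$. Your multiplicity bookkeeping, including the observation that the symmetry of $\spec(A)$ is what merges the contributions of $\mu_j$ and $-\mu_j$ into multiplicity $2m_j$, is accurate and complete.
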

\bpf It is immediate that $B\in\symo(G\cp K_2)$.
Furthermore, $\spec(A^2+I)=\{1^{(2m_0)}, (\mu_1^2+1)^{(2m_1)},\dots, (\mu_k^2+1)^{(2m_k)}\}$.  
 The characteristic polynomial of $B$ is
\[ \det(xI_{2n}-B)=\det\mtx{xI_n-A &-I_n\\-I_n & xI_n+A}=\det((xI_n-A)(xI_n+A)-I_n)=\det(x^2 I_n-(A^2+I_n)).\]
Thus $\spec(B)=-\spec(B)$ and is as stated. 
\epf

The next result is immediate from Lemma \ref{l:cartprod}.
 \begin{cor}\label{p:cartprod} Let $G$ be a graph of order $n$ such that there exists $A\in\symo(G)$ with $q(A)=q_0(G)$ and $\spec(A)=-\spec(A)$.  Then  there exists $B\in\symo(G\cp K_2)$ having $\spec(B)=-\spec(B)$,  $0\not\in \spec(B)$, and $q(B)= q_0(G)+a$ where $a=0$ if $0\not\in\spec(A)$ and $a=1$ if  $0\in\spec(A)$.
\end{cor}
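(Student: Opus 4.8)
The plan is to show that Corollary~\ref{p:cartprod} follows directly by specializing Lemma~\ref{l:cartprod}. I am given a matrix $A\in\symo(G)$ realizing $q(A)=q_0(G)$ whose spectrum is symmetric about the origin, i.e. $\spec(A)=-\spec(A)$. First I would observe that spectral symmetry about the origin means exactly that the eigenvalues of $A$ come in the form covered by the hypothesis of Lemma~\ref{l:cartprod}: the nonzero eigenvalues pair up as $\pm\mu_i$ with equal multiplicities $m_i$, and there is possibly an eigenvalue $0$ with some multiplicity $m_0\ge 0$ (where $m_0=0$ corresponds to $0\notin\spec(A)$). So $A$ satisfies the standing assumptions of the lemma with no extra work.

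Next I would apply Lemma~\ref{l:cartprod} to produce $B=\mtx{A & I_n\\ I_n & -A}\in\symo(G\cp K_2)$ with the stated spectrum. From that spectrum two facts are immediate. First, every listed eigenvalue has the form $\pm\sqrt{\mu_i^2+1}$ or $\pm 1$, so the eigenvalues again occur in $\pm$ pairs, giving $\spec(B)=-\spec(B)$. Second, since $\sqrt{\mu_i^2+1}\ge 1>0$ and the $0$-eigenvalue of $A$ contributes the pair $\pm 1$ rather than $0$, no eigenvalue of $B$ equals zero; hence $0\notin\spec(B)$.

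It then remains to verify the count $q(B)=q_0(G)+a$. The key point is that the map $t\mapsto \sqrt{t^2+1}$ is strictly monotone in $|t|$, so the distinct positive values among $\sqrt{\mu_1^2+1}<\dots<\sqrt{\mu_k^2+1}$ are in bijection with the distinct values $0<\mu_1<\dots<\mu_k$; these $k$ values, together with their negatives, account for $2k$ distinct eigenvalues of $B$. If $0\notin\spec(A)$ (so $m_0=0$), then $q(A)=2k$, the eigenvalues $\pm 1$ do not appear, and $q(B)=2k=q(A)=q_0(G)$, matching $a=0$. If $0\in\spec(A)$ (so $m_0\ge 1$), then $q(A)=2k+1$, the extra pair $\pm 1$ appears and is distinct from all $\pm\sqrt{\mu_i^2+1}$ because $\sqrt{\mu_i^2+1}>1$ for $\mu_i>0$, giving $q(B)=2k+2=(2k+1)+1=q_0(G)+1$, matching $a=1$.

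I do not anticipate a genuine obstacle here, since the statement is labeled as immediate from the lemma; the only thing requiring care is the bookkeeping of distinct values. The one subtlety worth stating explicitly is why the $\pm 1$ eigenvalues (arising from $m_0$) are never accidentally equal to some $\pm\sqrt{\mu_i^2+1}$: this is precisely the strict inequality $\sqrt{\mu_i^2+1}>1$ coming from $\mu_i>0$, which guarantees the extra pair is genuinely new and contributes exactly $1$ to the distinct-eigenvalue count. Once that is noted, the two cases for $a$ follow by directly reading off the spectrum in Lemma~\ref{l:cartprod}.
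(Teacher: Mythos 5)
Your proof is correct and follows exactly the paper's approach: the paper states this corollary is immediate from Lemma~\ref{l:cartprod}, and your argument simply makes explicit the bookkeeping (the pairing $\pm\sqrt{\mu_i^2+1}$, the strict inequality $\sqrt{\mu_i^2+1}>1$ separating the $\pm1$ pair, and the resulting count $q(B)=q(A)+a$) that the paper leaves to the reader.
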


The process of taking the Cartesian product with $K_2$ can be repeated without raising $q_0$ beyond the first Cartesian product. 
\begin{cor}\label{c:qo2-cartprod}
If there exists $A\in\symo(G)$ with $q(A)=q_0(G)$ and $\spec(A)=-\spec(A)$, then   $q_0(G\cp K_2\cp\cdots$ $\cp K_2)\le q_0(G)+1$.  If there exists such a matrix $A$ with $0\not\in\spec(A)$, then $q_0(G\cp K_2\cp\cdots\cp K_2)\le q_0(G)$.  Thus  $q_0(Q_d)=2$ where $Q_d$ is the $d$-dimensional hypercube.
\end{cor}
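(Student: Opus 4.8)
The plan is to iterate the construction of Lemma~\ref{l:cartprod} directly, rather than to reapply Corollary~\ref{p:cartprod} at each stage; the latter is tempting but illegitimate, since it would require each intermediate matrix to realize $q_0$ of the intermediate graph, over which we have no control. Starting from the hypothesized $A\in\symo(G)$ with $\spec(A)=-\spec(A)$ and $q(A)=q_0(G)$, I would set $A_0=A$ and $A_{j+1}=\mtx{A_j & I\\ I & -A_j}$, and write $H_j=G\cp\underbrace{K_2\cp\cdots\cp K_2}_{j}$. A routine induction using Lemma~\ref{l:cartprod} (with base graph $H_j$ and matrix $A_j$) then gives $A_j\in\symo(H_j)$ and $\spec(A_j)=-\spec(A_j)$ for every $j$.

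The key observation is that the output of Lemma~\ref{l:cartprod} never has $0$ as an eigenvalue: the spectrum of $A_{j+1}$ consists of the values $\pm\sqrt{\nu^2+1}$ as $\nu$ ranges over the nonnegative eigenvalues of $A_j$, and each such value is nonzero. Hence $0\notin\spec(A_j)$ for all $j\ge 1$. Feeding this back into Lemma~\ref{l:cartprod}, whenever $0\notin\spec(A_j)$ the positive eigenvalues of $A_{j+1}$ are in bijection (via $\nu\mapsto\sqrt{\nu^2+1}$) with the positive eigenvalues of $A_j$, so $q(A_{j+1})=q(A_j)$; the distinct-eigenvalue count does not grow. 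Thus $q$ can change only at the first step, where Corollary~\ref{p:cartprod} applies (as $q(A_0)=q_0(G)$) and gives $q(A_1)=q_0(G)+a$ with $a=1$ if $0\in\spec(A)$ and $a=0$ otherwise. It follows that $q(A_d)=q_0(G)+a$ for all $d\ge 1$, and since $A_d\in\symo(H_d)$ we conclude $q_0(H_d)\le q_0(G)+a$. This yields both claimed bounds at once.

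For the hypercube, I would use $Q_d=\underbrace{K_2\cp\cdots\cp K_2}_{d}$ and take $G=K_2$, so that $Q_d=G\cp K_2\cp\cdots\cp K_2$. Every $A\in\symo(K_2)$ equals $\mtx{0 & b\\ b & 0}$ with $b\ne 0$, so $\spec(A)=\{b,-b\}=-\spec(A)$, $q(A)=2=q_0(K_2)$, and $0\notin\spec(A)$; the second bound then gives $q_0(Q_d)\le q_0(K_2)=2$, and since $Q_d$ has edges the Observation forces $q_0(Q_d)\ge 2$, whence $q_0(Q_d)=2$. The only real subtlety in all of this is the one flagged at the outset---seeing that the iteration must be carried out at the level of matrices through Lemma~\ref{l:cartprod}, with $0\notin\spec$ being preserved and thereby pinning the single possible $+1$ to the first product---after which everything is bookkeeping.
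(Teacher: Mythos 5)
Your proof is correct and is essentially the paper's intended argument: the corollary is stated without a written proof, and the lead-in sentence (``the process of taking the Cartesian product with $K_2$ can be repeated without raising $q_0$ beyond the first Cartesian product'') describes exactly your iteration of Lemma~\ref{l:cartprod}, in which the first product may add one distinct eigenvalue (precisely when $0\in\spec(A)$) but expels $0$ from the spectrum, after which each further product maps $\pm\mu$ to $\pm\sqrt{\mu^2+1}$ bijectively and so preserves the number of distinct eigenvalues. Your observation that the iteration must be carried out at the level of matrices via Lemma~\ref{l:cartprod}, rather than by re-invoking Corollary~\ref{p:cartprod} (whose hypothesis $q(A)=q_0(G)$ need not hold for the intermediate matrices), is a correct and worthwhile clarification of that same approach.
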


\subsection{ $\Mo(G)$ and $\maxmulto(G)$}\label{ss:MM0}
In this section, we focus on the maximum hollow nullity $\Mo(G)$ and maximum hollow multiplicity $\maxmulto(G)$ of a graph $G$. 
It is well known that adding a dominating vertex to a graph with no isolated vertices does not change the minimum rank (or positive semidefinite minimum rank):  if $A\in\sym(G)$ and $\rank A=\mr(G)$, choose $\bv$ such that every entry of $A\bv$ is nonzero and construct the matrix $B$  as in the next proof.  It is not so simple to find such a $\bv$ when the diagonal must be zero (and  it is not always possible). 

\begin{lem}\label{l:Zm-domvtx} Let $A$ be an $n\x n$ hollow symmetric matrix that has a nonzero eigenvalue $\lam$ with $\mult_A(\lam)\ge 2$ and an eigenvector $\bw$ for $\lam$ that has every entry nonzero.  Then there exists a vector $\bv\in\Rn$ such that both $\bv^TA\bv=0$ and every entry of  $A\bv$ is nonzero. Furthermore, $B=\mtx{A & A\bv\\ \bv^TA & \bv^TA\bv}\in \symo(\G(A)\vee K_1)$ and $\rank B = \rank A$. \end{lem}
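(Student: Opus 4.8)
The plan is to realize $\bv$ as a point on the quadric $\{\bx\in\Rn : \bx^T A\bx = 0\}$ at which the linear map $\bx\mapsto A\bx$ has no zero coordinate, and then verify the properties of the bordered matrix directly. First I would record two consequences of the hypotheses. Since $A\bw=\lam\bw$ and every entry $w_i$ of $\bw$ is nonzero, the $i$th coordinate of $A\bx$ is a nonzero linear functional (as $(A\bw)_i=\lam w_i\ne0$ shows no row of $A$ vanishes), so the feasibility of ``$A\bv$ all nonzero'' is at least not obstructed entrywise. Second, because $A$ is hollow, $\tr A=0$, so its eigenvalues sum to zero; as $A$ has the nonzero eigenvalue $\lam$ it is not the zero matrix, and a nonzero symmetric matrix of zero trace has eigenvalues of both signs. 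Hence there is an eigenvalue $\rho$ with $\lam\rho<0$; fix an eigenvector $\by$ for $\rho$.

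Next I would use $\mult_A(\lam)\ge2$ to create a free parameter. Choose $\bw'$ in the $\lam$-eigenspace linearly independent from $\bw$ and consider the family $\bv(t)=\bw+t\bw'+s\by$. Since eigenvectors for distinct eigenvalues are orthogonal, $\by^T(\bw+t\bw')=0$, and a short computation gives
\[\bv(t)^TA\,\bv(t)=\lam\|\bw+t\bw'\|^2+s^2\rho\|\by\|^2.\]
Because $\lam\rho<0$, choosing $s=s(t):=\sqrt{-\lam\|\bw+t\bw'\|^2/(\rho\|\by\|^2)}$ makes this vanish for every $t\in\R$; note $s(t)>0$ throughout, since the radicand is a positive, non-constant quadratic in $t$ with no real root (using that $\bw,\bw'$ are independent). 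Thus the entire curve $t\mapsto\bv(t)$ lies on the quadric.

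It then remains to pick $t$ so that $A\,\bv(t)=\lam(\bw+t\bw')+s(t)\rho\by$ has all entries nonzero, and this is the main obstacle. I would argue it generically: each coordinate $f_i(t)=\lam(w_i+t w_i')+\rho y_i\,s(t)$ is real-analytic, and I claim $f_i\not\equiv0$. Indeed $s(t)=\sqrt{g(t)}$ for a positive quadratic $g$ with no real root, so $s$ is not a polynomial; hence $f_i\equiv0$ would force $\rho y_i=0$ and then $\lam(w_i+t w_i')\equiv0$, which is impossible since $\rho\ne0$ forces $y_i=0$ while $w_i\ne0$ prevents the linear part from vanishing. A nonzero real-analytic function has isolated zeros, so only finitely many $t$ in a bounded interval are excluded by the $n$ conditions $f_i(t)=0$; any $t$ outside this finite set yields $\bv=\bv(t)$ with $\bv^TA\bv=0$ and every entry of $A\bv$ nonzero. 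I would emphasize that multiplicity $\ge2$ enters exactly in the availability of $\bw'$: without it the curve degenerates to the two points $\bw\pm s\by$, and a forced zero coordinate could not be dodged.

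Finally, for the ``furthermore'' I would write $B=P^TAP$ with $P=[\,I_n\ \bv\,]$, which is precisely the displayed bordered matrix. Its diagonal is zero (the diagonal of $A$ is zero and the last entry is $\bv^TA\bv=0$), so $B$ is hollow; since every entry of $A\bv$ is nonzero, the new vertex is adjacent to all others, giving $\G(B)=\G(A)\vee K_1$ and hence $B\in\symo(\G(A)\vee K_1)$. For the rank, $P$ has rank $n$ and $P^T$ has trivial kernel, so $\rank B=\rank(P^TAP)=\rank(AP)$; and $AP=[\,A\ A\bv\,]$ has the same column space as $A$ because $A\bv\in\range A$, whence $\rank B=\rank A$. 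Only the middle step — meeting the quadratic constraint and the all-nonzero open constraint at once — is genuinely delicate, and the analyticity/genericity argument resolves it.
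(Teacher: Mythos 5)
Your proof is correct, and it reaches the same endpoint as the paper's by a genuinely different construction. Both arguments share the broad strategy: build a one-parameter family of vectors lying identically on the quadric $\bv^T A\bv=0$ (the free parameter being available precisely because $\mult_A(\lam)\ge 2$), then discard the finitely many parameter values at which some coordinate of $A\bv$ vanishes, with nontriviality of each coordinate condition guaranteed by the nowhere-zero eigenvector $\bw$. The difference is in how the family is built. The paper takes a full orthonormal eigenbasis $\bx_1,\dots,\bx_n$ with $\bx_1$ a unit multiple of $\bw$ and $\bx_2$ a second $\lam$-eigenvector, and sets $\bv_a=a\bx_1+\sqrt{2-a^2}\,\bx_2+\bx_3+\dots+\bx_n$; since the squared coefficients on the two $\lam$-eigenvectors sum to $2$, the quadratic form telescopes to $\bv_a^TA\bv_a=2\lam+\lam_3+\dots+\lam_n=\tr A=0$ for every $a$, so hollowness enters directly through the trace, and the excluded values of $a$ are the (finitely many) solutions of $n$ explicit equations $\alpha_j a+\beta_j\sqrt{2-a^2}+\gamma_j=0$ with $\alpha_j=\lam(\bx_1)_j\ne 0$. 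You instead work with only three eigenvectors: you first extract from $\tr A=0$ an eigenvalue $\rho$ with $\lam\rho<0$, then balance $\lam\|\bw+t\bw'\|^2$ against $s^2\rho\|\by\|^2$ along the curve $\bv(t)=\bw+t\bw'+s(t)\by$. This costs an extra existence step (the opposite-sign eigenvalue) and a real-analyticity argument to control the exclusion set (your key observation that $s(t)=\sqrt{g(t)}$ cannot be a polynomial because $g$ has no real root is exactly what makes each $f_i\not\equiv 0$), but it avoids carrying a full eigenbasis; the two uses of hollowness are equivalent in substance. Finally, your verification of the ``furthermore'' clause via $B=P^TAP$ with $P=[\,I_n\ \bv\,]$, giving $\rank B=\rank(AP)=\rank A$ since $A\bv\in\range A$, is a clean expansion of what the paper dismisses as immediate.
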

\bpf
 Denote the eigenvalues of  $A$ by  $\lam_1=\lam,\lam_2=\lam,\lam_3,\dots,\lam_{n}$ (no ordering implied).  Let $\bu$ be a multiple of $\bw$ of length one, and choose a basis of orthonormal eigenvectors $\bx_1=\bu,\bx_2,\dots, \bx_{n}$ with $A\bx_i=\lam_i\bx_i$.   For a real number $a$ with $-\sqrt 2<a<\sqrt 2$, define \[\bv_a= a \bx_1+\sqrt{2-a^2}\bx_2+\bx_3+\dots+\bx_{n}.\]  
 Since  $\bx_1,\dots, \bx_{n}$ are orthonormal eigenvectors, \[A\bv_a=a\lam\bu+\sqrt{2-a^2}\lam\bx_2+\lam_3\bx_3+\dots+\lam_{n}\bx_{n}\] and \[\bv_a^TA\bv_a=a^2\lam+(2-a^2)\lam+\lam_3+\dots+\lam_{n}=\tr(A)=0.\] To have all entries of $A\bv_a$ nonzero, choose $a$ to avoid the solutions to the equations 
 \[a\lam(\bu)_j+\sqrt{2-a^2}\lam(\bx_2)_j+\lam_3(\bx_3)_j+\dots+\lam_{n}(\bx_{n})_j\]
 for $j=1,\dots,n$ (note that each equation is nontrivial because $\lam(\bu)_j\ne 0$). The last statement is immediate, given $\bv=\bv_a$ with appropriate choice of $a$.
  \epf

\begin{lem}\label{l:MMo-domvtx} Let $G$ be a graph with no isolated vertices such that $(m_1,\dots,m_r)\in \omlo(G)$ and let $H=G\,\vee K_1$.  Then there exists $B\in\symo(H)$ with $m_1(B)=m_1+1$, and $\maxmulto(H)\ge m_1+1$. \end{lem}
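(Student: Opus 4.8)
The plan is to imitate the orthogonal-representation construction used in Lemma \ref{r:m1}: realize the list by a matrix $A\in\symo(G)$, shift it so that its smallest eigenvalue becomes the bottom of a positive semidefinite matrix, factor that matrix as a Gram matrix $R^{T}R$, and then border it by a single new row/column corresponding to the dominating vertex, chosen so that the rank is unchanged. Then the nullity of the shifted matrix, and hence the multiplicity of the smallest eigenvalue, goes up by exactly one.

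Concretely, first I would pick $A\in\symo(G)$ with $\oml(A)=(m_1,\dots,m_r)$ and let $\mu_1$ be its smallest eigenvalue. Because $G$ has no isolated vertices, $A\neq 0$ while $\tr A=0$, so $A$ has eigenvalues of both signs and $\mu_1<0$; in particular $m_1\le n-1$. Set $M=A-\mu_1 I$. Then $M\succeq 0$, $\nul M=m_1$, $\rank M=n-m_1$, and since $A$ is hollow every diagonal entry of $M$ equals $-\mu_1>0$. Writing $M=R^{T}R$ with $R=[\br_1,\dots,\br_n]$ of size $(n-m_1)\times n$, this says $\|\br_i\|^2=-\mu_1$ for every $i$, and $R$ has full row rank $n-m_1$.

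Next I would produce the border. I want a vector $\bu\in\R^{\,n-m_1}$ with $\|\bu\|^2=-\mu_1$ and $\br_i^{T}\bu\neq 0$ for all $i$; then I set $\wt R=[R\mid \bu]$ and $B=\wt R^{T}\wt R+\mu_1 I_{n+1}$. A direct check gives $B=\mtx{A & R^{T}\bu\\ \bu^{T}R & 0}$: the new diagonal entry is $\bu^{T}\bu+\mu_1=0$ and the old diagonal is unchanged, so $B$ is hollow; the new off-diagonal entries are $\br_i^{T}\bu\neq 0$, so the new vertex is adjacent to every vertex of $G$, whence $B\in\symo(G\vee K_1)=\symo(H)$. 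Since $\rank\wt R=\rank R=n-m_1$, the matrix $B-\mu_1 I=\wt R^{T}\wt R\succeq 0$ has nullity $m_1+1$; hence $\mu_1$ is the smallest eigenvalue of $B$ with $m_1(B)=m_1+1$, and $\maxmulto(H)\ge m_1(B)=m_1+1$.

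The only real obstacle is the existence of the border vector $\bu$, which is exactly the demand that the added vertex be dominating (all border entries nonzero) while the forced norm constraint $\|\bu\|^2=-\mu_1$ keeps $B$ hollow. Each condition $\br_i^{T}\bu=0$ cuts out a hyperplane through the origin in $\R^{\,n-m_1}$ (a proper condition, as $\br_i\neq 0$), and the sphere of radius $\sqrt{-\mu_1}$ is not contained in a union of finitely many such hyperplanes; when $n-m_1=1$ the sphere is the two points $\pm\sqrt{-\mu_1}\neq 0$ and the condition is automatic. This is where ``no isolated vertices'' is used: it guarantees $\mu_1<0$ (so the sphere has positive radius) and $m_1\le n-1$ (so the ambient space is nonzero). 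Note that this route deliberately avoids Lemma \ref{l:Zm-domvtx}, whose hypotheses (a nonzero eigenvalue of multiplicity at least two with a nowhere-zero eigenvector) need not hold for an arbitrary list with $m_1=1$.
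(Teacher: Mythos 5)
Your proof is correct and takes essentially the same route as the paper's: shift $A$ by its smallest eigenvalue to obtain a positive semidefinite Gram matrix $R^TR$, border $R$ with one new column of the same norm having nonzero inner products with all existing columns, and subtract the shift to land in $\symo(G\vee K_1)$ with the nullity of the shifted matrix increased by one. The only real difference is that where the paper justifies the border vector by a random choice working ``with high probability,'' you give the cleaner deterministic argument that a sphere of positive radius cannot be covered by finitely many hyperplanes (and you make explicit why the no-isolated-vertices hypothesis forces $\mu_1<0$), which is a minor tightening rather than a different method.
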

\bpf Let $A\in\symo(G)$ such that $\oml(A)=(m_1,\dots,m_r)$,  and let $\mu=-\mu_1(A)$.  Define  $A'=A+\mu I_n$ so $A'$ is positive semidefinite. Then there is an $(n-m_1)\x n $ matrix $R=[\br_1,\dots,\br_n]$ such that $A'=R^TR$.  Then $\br_i^T\br_i=\mu$ for $i=1,\dots,n$ since every diagonal entry of $A'$ is equal to $\mu$,  and for $j\ne i$, $\br_i^T\br_j\ne 0$ if and only if $ij\in E(G)$.  Choose an $(n-m_1)$-vector $\br_{n+1}$ such that $\br_i^T\br_{n+1}\ne 0$ for $i=1,\dots,n$ and $\br_{n+1}^T\br_{n+1}=\mu$ (a random vector normalized to have length $\mu$ will work with high probability).  Define $R'= [\br_1,\dots,\br_n,\br_{n+1}]$ and $A''=R'^TR'$. Then $A''\in\sym(H)$ and every diagonal entry of $A''$ is $\mu$, so $B=A''-\mu I_{n+1}\in\symo(H)$.  Since $\rank A''=\rank A'=m_1$, $\nul A''=m_1+1$ and $m_1(B)=m_1+1$.  Thus $\maxmulto(H)\ge m_1+1$.
\epf

\begin {cor}\label{c:domvtx} If there exists $A\in\symo(G)$ such that $\maxmulto(A)=\mult_A(\mu_1(A))$, then $\maxmulto(G\vee K_1)=\maxmulto(G)+1$.  
\end{cor}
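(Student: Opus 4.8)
The plan is to prove the stated equality by establishing the two inequalities $\maxmulto(G\vee K_1)\le\maxmulto(G)+1$ and $\maxmulto(G\vee K_1)\ge\maxmulto(G)+1$ separately, with the hypothesis entering only in the second.

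For the upper bound, write $H=G\vee K_1$ and let $v$ be the dominating vertex. First I would observe that for any $B\in\symo(H)$, deleting the row and column indexed by $v$ yields a principal submatrix $B(v)$ that is again hollow and whose off-diagonal pattern is exactly $G$, since the only edges of $H$ absent from $G$ are those incident to $v$; hence $B(v)\in\symo(G)$. By Cauchy interlacing the eigenvalues of $B$ and $B(v)$ interlace, so for every real $\lambda$ one has $\mult_B(\lambda)\le\mult_{B(v)}(\lambda)+1\le\maxmulto(G)+1$. Taking the maximum over all eigenvalues $\lambda$ and all $B\in\symo(H)$ gives $\maxmulto(H)\le\maxmulto(G)+1$. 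This direction uses no hypothesis on $G$.

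For the lower bound I would invoke the hypothesis: there is $A\in\symo(G)$ whose largest eigenvalue multiplicity equals $\maxmulto(G)$ and is attained at the smallest eigenvalue $\mu_1(A)$, so that if $\oml(A)=(m_1,\dots,m_r)$ then $m_1=\maxmulto(G)$. The reason the hypothesis insists that the maximum occur at $\mu_1(A)$ is precisely that Lemma \ref{l:MMo-domvtx} raises the multiplicity of the \emph{bottom} eigenvalue. Applying that lemma to the list $(m_1,\dots,m_r)\in\omlo(G)$ produces $B\in\symo(H)$ with $m_1(B)=m_1+1=\maxmulto(G)+1$, whence $\maxmulto(H)\ge\maxmulto(G)+1$. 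Combining the two bounds yields the claimed equality.

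The main subtlety I would watch is the applicability of Lemma \ref{l:MMo-domvtx}, which is stated for graphs with no isolated vertices: I would either inherit that restriction (the interesting cases of adding a dominating vertex have $G$ without isolated vertices) or verify directly that the lemma's construction — forming $A+\mu I$ with $\mu=-\mu_1(A)$, factoring $A+\mu I=R^TR$, and appending a generic column of length $\mu$ meeting every $\br_i$ nontrivially — survives the presence of isolated vertices of $G$. I would also confirm the interlacing estimate $\mult_B(\lambda)\le\mult_{B(v)}(\lambda)+1$ applies uniformly over all $\lambda$, including $\lambda=0$, so that no special treatment of the zero eigenvalue or the hollow constraint is needed in the upper-bound argument; this is the step whose careful bookkeeping I expect to be the only real obstacle.
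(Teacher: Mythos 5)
Your proof is correct and takes essentially the same route as the paper's: the lower bound comes from Lemma \ref{l:MMo-domvtx} applied to a matrix realizing $\maxmulto(G)$ at its smallest eigenvalue, and the upper bound from Cauchy interlacing after deleting the dominating vertex, which is exactly the paper's (more tersely stated) argument. Your flagged concern about the no-isolated-vertices hypothesis of Lemma \ref{l:MMo-domvtx} is legitimate rather than pedantic --- the paper's statement silently omits it, and the equality genuinely fails for edgeless graphs since $\maxmulto(\OL{K_n}\vee K_1)=\maxmulto(K_{1,n})=n-1\ne n+1$ --- but this is a defect of the corollary's statement, not of your proof.
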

\bpf Suppose there exists $A\in\symo(G)$ such that $\maxmulto(A)=\mult_A(\mu_1(A))$.  Then $\maxmulto(G\vee K_1)\ge \maxmulto(G)+1$ by Lemma \ref{l:MMo-domvtx}.  Interlacing implies $\maxmulto(G\vee K_1)\le \maxmulto(G)+1$. \epf

Note that the process of constructing $B$ in Lemma  \ref{l:MMo-domvtx}  preserves the nullity but not the spectrum.  Interlacing provides some control of other multiple eigenvalues but  multiplicities may each be reduced by one and the number of distinct eigenvalues may increase. 


\subsection{Simple eigenvalues and ordered multiplicity lists}\label{ss:oml}

It is well known that every graph allows a matrix $A\in\sym(G)$ having every eigenvalue simple.  This is not the case for a graph $G$ of order $n$ that has $\MRo(G)\le n-2$, since $\mult_A(0)\ge n-\MRo(G)\ge 2$ for $A\in\symo(G)$.  In this section we show that every graph $G$ allows a matrix of maximum hollow rank in which every nonzero eigenvalue is simple.

Recall that $\M(P_n)=1$, 
from which it is immediate that  $\maxmulto(P_n)=1$ and $q_0(P_n)=n$.  
Thus $(1,1,\dots,1)\in \symo(P_n)$ for all $n$. 

\begin{lem}\label{l:simpcycl} For any cycle $C_n$, there is a matrix $A\in \symo(C_n)$ such that $\oml(A)=(1,1,\dots,1)$ and every eigenvalue of $A$ is nonzero.
\end{lem}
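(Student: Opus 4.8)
The plan is to obtain the desired matrix by perturbing a hollow path matrix along the one missing edge. Label the vertices of $C_n$ as $v_1,\dots,v_n$ with edges $v_iv_{i+1}$ (indices modulo $n$), and let $P_n$ be the path obtained by deleting the edge $v_nv_1$. Since $\M(P_n)=1$ gives $(1,1,\dots,1)\in\omlo(P_n)$ (as recalled just above), I may fix a matrix $M\in\symo(P_n)$ all of whose eigenvalues are simple. For a real parameter $t$, define $A(t)=M+t(E_{1n}+E_{n1})$, where $E_{1n}$ is the matrix with a $1$ in position $(1,n)$ and zeros elsewhere. Then $A(t)$ is hollow and symmetric, and for $t\ne 0$ its off-diagonal support is precisely the edge set of $C_n$, so $A(t)\in\symo(C_n)$.

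The first step is to show that simplicity persists near $t=0$. Because the eigenvalues of $A(0)=M$ are distinct and the eigenvalues of a symmetric matrix depend continuously on its entries, there is an $\veps>0$ such that $A(t)$ has $n$ distinct eigenvalues for every $t\in(-\veps,\veps)$; equivalently $\oml(A(t))=(1,1,\dots,1)$ throughout this interval.

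The second step is to rule out the eigenvalue $0$ for a suitable $t$, for which I would analyze $\det A(t)=\pm S_n(A(t))$ as a polynomial in $t$ via \eqref{eq:gencyc}. The only spanning generalized cycle of $C_n$ that uses the edge $v_nv_1$ inside a graph cycle is the full $n$-cycle, and it contributes a term $\pm 2a_1\cdots a_{n-1}\,t$ of degree one in $t$, where $a_1,\dots,a_{n-1}$ denote the nonzero path weights of $M$. Every other spanning generalized cycle either omits $v_nv_1$ (degree $0$ in $t$) or uses it as an edge component (degree $2$ in $t$), so the coefficient of $t$ in $\det A(t)$ equals $\pm 2a_1\cdots a_{n-1}\ne 0$. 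Hence $\det A(t)$ is a nonzero polynomial in $t$ and has only finitely many real roots. Choosing $t_0\in(-\veps,\veps)$ with $t_0\ne 0$ and $\det A(t_0)\ne 0$, the matrix $A:=A(t_0)\in\symo(C_n)$ then has all eigenvalues simple and, since $\det A\ne 0$, none equal to $0$.

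The main obstacle is precisely this second step: guaranteeing that $0$ is avoided while simplicity is preserved. One cannot simply start from a nonsingular path matrix, since by Theorem \ref{maxrank} every matrix in $\symo(P_n)$ is singular when $n$ is odd (there $\MRo(P_n)=n-1$). The determinant must instead be pried away from zero by the added cycle edge itself, which is exactly what the degree-one full-cycle term in \eqref{eq:gencyc} achieves. The remaining ingredients, namely the simplicity of the spectrum of an unreduced tridiagonal matrix and the openness of the distinct-spectrum condition, are standard and already available from the excerpt.
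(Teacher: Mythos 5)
Your proposal is correct, and its skeleton coincides with the paper's: both start from a matrix $M\in\symo(P_n)$ with distinct eigenvalues (available since $\M(P_n)=1$), add a parameter $t$ (the paper's $\veps$) in the two entries of the missing edge, and use continuity of eigenvalues to keep the spectrum simple for small $t$. Where you genuinely diverge is in ruling out the eigenvalue $0$. The paper does this by a parity case split that quotes known hollow rank facts: for even $n$ the hollow path matrix is itself nonsingular, so nonsingularity also survives a small perturbation by continuity, and for odd $n$ one has $\mro(C_n)=n$, so \emph{every} matrix in $\symo(C_n)$ is automatically nonsingular and no choice of $t$ is needed. You instead give a uniform argument valid for both parities: expanding $\det A(t)$ via \eqref{eq:gencyc}, the full $n$-cycle is the unique spanning generalized cycle contributing a degree-one term in $t$ (the other spanning generalized cycles are the perfect matchings, contributing degree $0$ or degree $2$), so the coefficient of $t$ is $\pm 2a_1\cdots a_{n-1}\ne 0$, the determinant is a nonzero polynomial in $t$, and all but finitely many $t$ in the simplicity window work. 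Both arguments are sound; the paper's is shorter because it leans on previously established facts about $\mro$ of paths and cycles, while yours is self-contained at this point of the paper (needing only \eqref{eq:gencyc}) and in effect proves, rather than cites, the nonsingularity statements the paper invokes. Your closing observation that one cannot simply start from a nonsingular path matrix when $n$ is odd (since $\MRo(P_n)=n-1$ there) correctly identifies why some such determinant or rank input is unavoidable.
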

\bpf Let $P_n$ be obtained from $C_n$ by deleting the edge $\{1,n\}$.  Choose $A\in\symo(P_n)$. Note that the eigenvalues of $A$ are distinct.  
Let $H$ be the graph with $V(H)=V(G)$ and $E(H)=\{1,n\}$.  Define $A_{\veps}=A+\veps \A(H)$. 
Then for $\veps>0$,  $A_{\veps}\in\symo(C_n)$, and the eigenvalues of $A_{\veps}$ are distinct  for $\veps$ sufficiently small by continuity.  If $n$ is even, then the eigenvalues of $A$ are all nonzero, so the eigenvalues of $A_{\veps}$ are nonzero for $\veps$ sufficiently small. If $n$ is odd, then $\mro(C_n)=n$, so the eigenvalues of $A_{\veps}$ are all nonzero. \vspace{-5pt}
\epf

\begin{prop} \label{p:allsimp} Let $G$ be a graph of order $n$.  There is a matrix $A\in\symo(G)$ that has $\MRo(G)$ simple nonzero eigenvalues.  If $\MRo(G)\ge n-1$, then there is a matrix $A\in \symo(G)$ such that $\oml(A)=(1,1,\dots,1)$.
\end{prop}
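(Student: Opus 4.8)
The plan is to prove the first statement and deduce the second as an immediate corollary. Indeed, if $A\in\symo(G)$ has $r:=\MRo(G)$ simple nonzero eigenvalues then $\rank A=r$, since the rank cannot exceed $\MRo(G)$ yet already accounts for $r$ nonzero eigenvalues; hence $0$ is an eigenvalue of multiplicity $n-r$. When $r\ge n-1$ this multiplicity is at most $1$, so together with the $r$ distinct nonzero eigenvalues all $n$ eigenvalues are distinct, i.e. $\oml(A)=(1,\dots,1)$. Thus everything reduces to the first statement.

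For the first statement I would work inside the real vector space $L=\{A\in\R^{n\times n}: A\trans=A,\ a_{ii}=0,\ a_{ij}=0\text{ whenever }ij\notin E(G)\}$, so that $\symo(G)$ is the subset of $L$ consisting of matrices that are nonzero on every edge of $G$. Since $L$ is an affine space it is irreducible, and since every $A\in L$ is a weighted hollow adjacency matrix of a subgraph of $G$, Theorem \ref{maxrank} gives $\rank A\le r$ for all $A\in L$. I would then isolate three conditions, each failing only on a proper algebraic subset of $L$: (i) $A\in\symo(G)$, failing only on the union of hyperplanes $\{a_{ij}=0\}$, $ij\in E(G)$; (ii) $\rank A=r$, whose failure set $\{\rank A\le r-1\}$ is cut out by the vanishing of all $r\times r$ minors; and (iii) $\operatorname{disc}(q_A)\ne 0$, where $q_A(x)=x^r-S_1(A)x^{r-1}+\cdots+(-1)^rS_r(A)$ has coefficients polynomial in the entries of $A$ and $\operatorname{disc}$ denotes its discriminant. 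On the rank-$r$ locus one has $S_{r+1}(A)=\cdots=S_n(A)=0$ and $S_r(A)\ne 0$, so $p_A(x)=x^{n-r}q_A(x)$ and the roots of $q_A$ are exactly the nonzero eigenvalues of $A$; hence on this locus (iii) says precisely that those $r$ eigenvalues are distinct. A matrix satisfying (i)--(iii) is the desired $A$, and one exists as soon as each failure set is proper, because a finite union of proper closed subsets of the irreducible $L$ is proper, so its complement is nonempty.

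Condition (i) is obviously proper, as $\A(G)$ satisfies it. Conditions (ii) and (iii) both hinge on producing a single matrix $M\in L$ of rank $r$ with $\operatorname{disc}(q_M)\ne 0$, and this construction is the heart of the argument and the main obstacle. I would take a generalized cycle $\CC$ of $G$ of order $r$ (which exists by Theorem \ref{maxrank}), decompose it into its cycle and edge components, place on each cycle a hollow matrix with distinct nonzero eigenvalues via Lemma \ref{l:simpcycl}, place $\bmt 0 & 1\\ 1 & 0\emt$ on each edge, and pad with zeros on the $n-r$ vertices outside $\CC$. The resulting $M\in L$ is block diagonal with nonsingular blocks of total size $r$, so $\rank M=r$, and its nonzero eigenvalues are the union of the blocks' eigenvalues. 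Since these may coincide across blocks, the key technical point is to rescale the $j$-th block by a generic nonzero scalar $c_j$: scaling multiplies that block's spectrum by $c_j$ while preserving rank and support, and a choice of the $c_j$ avoiding finitely many algebraic relations makes all $r$ nonzero eigenvalues distinct, yielding $\operatorname{disc}(q_M)\ne 0$. With $M$ in hand all three failure sets are proper, a common point $A$ exists, and the first statement follows.
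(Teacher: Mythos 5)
Your proposal is correct, and its engine is the same as the paper's: both arguments take a generalized cycle $\mathcal{C}$ of order $r=\MRo(G)$ (available by Theorem \ref{maxrank}), place matrices with distinct nonzero spectra on its components (Lemma \ref{l:simpcycl} for cycles, $\bmt 0&1\\1&0\emt$ for edges), and rescale the blocks generically so that all $r$ nonzero eigenvalues are pairwise distinct; your reduction of the second statement to the first also matches the paper's ``immediate'' closing remark. Where you genuinely diverge is the concluding step. The paper passes from the block matrix to a matrix in $\symo(G)$ analytically: it adds $\veps\,\A(H)$, where $H$ carries the edges of $G$ outside $\mathcal{C}$, and invokes continuity of eigenvalues for small $\veps>0$. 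You instead run a Zariski-genericity argument in the linear space $L$ of hollow symmetric matrices supported on $E(G)$: the three conditions (nonzero on every edge, rank exactly $r$, nonvanishing discriminant of $q_A$) each fail only on a proper closed subset, and irreducibility of $L$ yields a common solution. The paper's finish is shorter and more elementary, but yours buys rigor at exactly the point where the paper is loose: the assertion that ``the eigenvalues of $A_\veps$ are distinct and nonzero'' cannot hold verbatim when $\MRo(G)\le n-2$, since every matrix in $\symo(G)$ has nullity at least $n-r$; what the continuity argument really shows is that the $r$ eigenvalues near the block eigenvalues stay simple and nonzero while the remaining $n-r$ stay at $0$. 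Your factorization $p_A(x)=x^{n-r}q_A(x)$ on the rank-$r$ locus, together with the observation that $\rank A\le r$ throughout $L$, makes this explicit and cleanly separates the forced zero eigenvalues from the $r$ simple nonzero ones.
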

\bpf Choose a generalized cycle $\mathcal{C}$ of order $\MRo(G)$.  Note that $\mathcal{C}$ is the disjoint union of $k\ge 1$ cycles $C_{n_i}$ (where an edge is denoted by $C_2$). For each component $C_{n_i}$, there is a matrix $A_i\in \symo(C_{n_i})$ that has $n_i$ distinct nonzero eigenvalues.  Choose nonzero $\alpha_i\in\R$ such that the set of eigenvalues of the matrices $\alpha_iA_i$ has $n$ distinct nonzero elements.  Then the eigenvalues of $A=\alpha_1A_1\oplus\dots\oplus \alpha_kA_k$ are distinct and nonzero.  Let $H$ be the graph with $V(H)=V(G)$ and $E(H)=\{e\in E(G):e\not\in E(\mathcal{C})\}$.  Define $A_{\veps}=A+\veps \A(H)$. Then for $\veps>0$,  $A_{\veps}\in\symo(G)$, and the eigenvalues of $A_{\veps}$ are distinct and nonzero for $\veps$ sufficiently small by continuity.  The last statement is immediate.
\epf

The minimum number of distinct eigenvalues of a graph is an active area of research, whereas the maximum number of  distinct eigenvalues of a matrix in $\sym(G)$ is the order of the graph $G$.  For hollow matrices, the maximum number of distinct eigenvalues is provided by Proposition \ref{p:allsimp}.

\begin {cor}\label{c:maxdistevals} Let $G$ be a graph of order $n$.  The maximum number of distinct eigenvalues of  a matrix in $\symo(G)$ is $\min\{\MRo(G)+1,n\}$.
\end{cor}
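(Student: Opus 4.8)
The plan is to establish matching upper and lower bounds, both of which follow quickly from material already in hand, so that the corollary is essentially bookkeeping on top of Proposition \ref{p:allsimp}. For the upper bound, I would start from the observation that every $A\in\symo(G)$ satisfies $\rank A\le\MRo(G)$ by definition of $\MRo(G)$. Hence $A$ has at most $\MRo(G)$ nonzero eigenvalues counted with multiplicity, and in particular at most $\MRo(G)$ \emph{distinct} nonzero eigenvalues. The number of distinct eigenvalues $q(A)$ is therefore at most $\MRo(G)$ plus one more for the eigenvalue $0$, which is present precisely when $\rank A<n$. Combining this with the trivial fact that an $n\times n$ symmetric matrix has at most $n$ distinct eigenvalues yields $q(A)\le\min\{\MRo(G)+1,n\}$ for every $A\in\symo(G)$.

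For the lower bound (realizability) I would invoke Proposition \ref{p:allsimp} and split into two cases according to whether $\MRo(G)<n$ or $\MRo(G)=n$. If $\MRo(G)<n$, the first statement of Proposition \ref{p:allsimp} supplies $A\in\symo(G)$ with $\MRo(G)$ simple nonzero eigenvalues; since these are simple, $\rank A=\MRo(G)<n$, so $A$ also has $0$ as an eigenvalue, giving exactly $\MRo(G)+1$ distinct eigenvalues. As $\MRo(G)+1\le n$ in this case, this equals $\min\{\MRo(G)+1,n\}$. If $\MRo(G)=n$, then $\MRo(G)\ge n-1$, so the last statement of Proposition \ref{p:allsimp} supplies $A\in\symo(G)$ with $\oml(A)=(1,1,\dots,1)$, i.e.\ $n$ distinct eigenvalues, matching $\min\{n+1,n\}=n$. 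In both cases the value $\min\{\MRo(G)+1,n\}$ is attained, so it is the maximum.

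The only point requiring care—really a bookkeeping subtlety rather than a genuine obstacle—is the correct accounting of the zero eigenvalue in both directions. In the upper bound I must keep the eigenvalue $0$ separate from the nonzero eigenvalues, so that the count is $\MRo(G)+1$ and not $\MRo(G)$; in the lower bound I must verify that the realizing matrix of Proposition \ref{p:allsimp} acquires $0$ as an eigenvalue exactly when $\MRo(G)<n$, and that when $\MRo(G)=n$ the all-ones ordered multiplicity list is used instead (where $0$ is absent). Because Proposition \ref{p:allsimp} already produces all the matrices needed, no new construction is required and the argument is short.
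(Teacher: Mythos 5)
Your proposal is correct and follows exactly the route the paper intends: the paper states this corollary without a separate proof, remarking only that it ``is provided by Proposition \ref{p:allsimp},'' and your argument---upper bound from $\rank A\le\MRo(G)$ (so at most $\MRo(G)$ distinct nonzero eigenvalues, plus possibly $0$, and trivially at most $n$), lower bound realized by the matrices of Proposition \ref{p:allsimp} in the two cases $\MRo(G)<n$ and $\MRo(G)=n$---is precisely the bookkeeping the authors leave implicit. Your care about when $0$ enters the spectrum is the right detail to make explicit.
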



\section{Bipartite graphs}\label{s:bipart}

For a hollow matrix described by a bipartite graph, the spectrum is symmetric about the origin,  i.e., $\spec(A)=-\spec(A)$ for any  $A\in\symo(G)$. That result (Theorem \ref{t:bip-spec-sym}) is established in this section, together with numerous consequences and other results specific to bipartite graphs.  Results in this section are used to solve the hollow IEP-$G$ for all complete bipartite graphs $K_{m,n}$ in Section \ref{s:fam-small}. 
An {\em order $k$ principal minor of $A$} is the determinant of a $k\times k$ principal submatrix of $A$.

\begin{rem} 
\label{obs:bp_even_rank}\label{obs:bp_odd_zero}
Suppose $G$ is a bipartite graph, and let $A \in \symo(G)$. Then $A$ has even rank, because \[A=\left[\begin{array}{cc}
	0&B\\
	B^T&0\\
	\end{array}\right]\]
for some matrix $B$, so the rank of $A$ is twice the rank of $B$.
  Let $S \subset V(G)$ such that $|S| = k$, with $k$ odd.  Then, the induced subgraph $G[S]$ is again bipartite.  Since $\rank A[S]$ must be even, $A[S]\in\symo(G[S])$ cannot be of full rank.  Therefore, the corresponding principal minor is zero. Hence, every odd order principal minor of $A$ is zero.
\end{rem}

\begin{thm}\label{t:bip-spec-sym}
Suppose $G$ is a bipartite graph on $n$ vertices, and $A \in \symo(G)$.  Then $\spec(A) = -\spec(A)$. 
\end{thm}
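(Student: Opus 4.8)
The plan is to exploit the bipartite block structure of $A$ already recorded in Remark \ref{obs:bp_even_rank}: writing the vertex set as the two color classes, any $A\in\symo(G)$ takes the form $A=\mtx{0 & B\\ B^T & 0}$, where the diagonal blocks vanish because no edge joins two vertices of the same part and the diagonal is zero. The key idea is that such a matrix is similar to its negative via a diagonal $\pm 1$ sign change, so that $\spec(A)=\spec(-A)=-\spec(A)$ follows from similarity preserving the characteristic polynomial.

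Concretely, first I would let $p$ and $q=n-p$ denote the sizes of the two parts and set $D=\mtx{I_p & 0\\ 0 & -I_q}$. Since $D$ is an involution ($D^2=I_n$, so $D^{-1}=D$), conjugation by $D$ is a genuine similarity. Next I would compute $DAD$ directly from the block form: multiplying on the left by $D$ negates the bottom block row while multiplying on the right by $D$ negates the right block column, so each off-diagonal block picks up exactly one sign and $DAD=\mtx{0 & -B\\ -B^T & 0}=-A$. This single identity is the entire computational content, and it is immediate from the block decomposition.

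Finally, since $A$ and $-A$ are similar they share the same characteristic polynomial, and hence the same spectrum as a multiset; therefore $\spec(A)=\spec(-A)=-\spec(A)$, as claimed.

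There is essentially no obstacle here: the only substance is recognizing the bipartite block form (supplied by the Remark) and verifying the one-line conjugation identity. Should one prefer to avoid block manipulations entirely, an alternative is to argue at the level of the characteristic polynomial. Remark \ref{obs:bp_even_rank} shows that every odd-order principal minor of $A$ vanishes, so $S_k(A)=0$ for all odd $k$; hence in $p_A(x)=\sum_{k}(-1)^k S_k(A)\,x^{n-k}$ only exponents of the same parity as $n$ survive, which gives $p_A(-x)=(-1)^n p_A(x)$ and forces the root multiset to be invariant under negation. I expect the similarity argument to be the cleaner writeup of the two.
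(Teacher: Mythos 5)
Your proof is correct, but it takes a genuinely different route from the paper. You conjugate by the signature matrix $D=\mtx{I_p & 0\\ 0 & -I_q}$ to get $DAD=-A$, so $A$ and $-A$ are similar and the spectral symmetry is immediate; the paper instead works entirely with the characteristic polynomial, using Remark \ref{obs:bp_even_rank} to kill all odd-order principal minors (so $S_k(A)=0$ for odd $k$), then splitting into cases on the parity of $n$, substituting $y=x^2$, and factoring $f_A(y)$ over the nonnegative reals to pair each eigenvalue $\sqrt{\alpha_i}$ with $-\sqrt{\alpha_i}$. Your similarity argument is shorter and arguably more transparent: it needs only the block form (which requires both bipartiteness and the hollow diagonal, exactly as you note), it avoids the parity case analysis entirely, and it hands you for free that $\mult_A(\lambda)=\mult_A(-\lambda)$ for every $\lambda$, since similar matrices have identical characteristic polynomials. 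What the paper's approach buys is continuity with its own toolkit: the vanishing of odd principal minors and the generalized-cycle expansion \eqref{eq:gencyc} are reused elsewhere, so the polynomial proof doubles as an illustration of that machinery. Your fallback suggestion---noting that $S_k(A)=0$ for odd $k$ forces $p_A(-x)=(-1)^n p_A(x)$---is in fact a tighter version of the paper's own argument, dispensing with its case split and explicit factorization; either of your two routes would be a valid and cleaner writeup.
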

\begin{proof}
Let $p_A(x)=x^n+a_{n-1}x^{n-1}+\dots+a_1x+a_0$ be the characteristic polynomial of $A$. Recall that $a_{n-k}=(-1)^k S_k(A)$ where $S_k(A)$ is the sum of all order $k$ principal minors of $A$.
From Remark \ref{obs:bp_odd_zero}, the characteristic polynomial of $A$ is
$p_A(x)=x^n+S_2(A) x^{n-2} + S_4(A) x^{n-4}+ \dots  $
because $S_k(A) = 0$ for odd $k$. 
First, we suppose $n$ is even. 
We reparameterize $p_A$ with $y=x^2$:
\[p_A(x)=f_A(y):=y^{n/2}+S_2(A) y^{n/2-1}+S_4(A) y^{n/2-2} + \dots + S_{n/2}(A).\]
Next, we factor 
\[f_A(y)=(y-\alpha_1)(y-\alpha_2)\cdots(y-\alpha_{n/2}),\]
where $\alpha_1, \alpha_2, \dots, \alpha_{n/2}$ are non-negative real numbers because $A$ is a real symmetric matrix.  Since  $p_A(x^2)=f_A(y)$,
\bea p_A(x)&=&(x^2-\alpha_1)(x^2-\alpha_2)\cdots(x^2-\alpha_{n/2})\\
&=&(x-\sqrt{\alpha_1})(x+\sqrt{\alpha_1})(x-\sqrt{\alpha_2})(x+\sqrt{\alpha_2})\cdots(x-\sqrt{\alpha_{n/2}})(x+\sqrt{\alpha_{n/2}}).\eea
 Therefore, $\spec(A) = -\spec(A)$ for even $n$.

Next, we suppose $n$ is odd. Then, from Remark \ref{obs:bp_odd_zero}, the characteristic polynomial of $A$ is
\bea p_A(x)&=&x^n+S_2(A) x^{n-2} + S_4(A) x^{n-4}+ \dots + S_{n-1}(A)x\\
&=&x(x^{n-1}+S_2(A) x^{n-3} + \dots + S_{n-1}(A)).\eea
We can therefore define $f_A(y)$ where $p_A(x)=x f_A(y)$ with $y=x^2$ and $f_A(y)$ factors as in the case where $n$ is even. 
As the remaining eigenvalue is zero,  $\spec(A) = -\spec(A)$ for odd $n$.
\end{proof}

\begin{cor}\label{bip-q3} If $G$ is bipartite, $G\ne \OL{K_n}$, and the order of $G$ is odd, then $q_0(G)\ge 3$.
\end{cor}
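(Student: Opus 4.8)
The plan is to reduce this to Proposition \ref{zero-eval}, which already packages the conclusion $q_0(G)\ge 3$ for any nonempty graph whose maximum hollow rank falls short of its order. Since $G\ne\OL{K_n}$, the graph $G$ has at least one edge and so is not the empty graph; thus the only remaining thing to verify is the rank deficiency $\MRo(G)<n$.

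To establish $\MRo(G)<n$, I would invoke the even-rank phenomenon recorded in Remark \ref{obs:bp_even_rank}: every $A\in\symo(G)$ has even rank, because a hollow matrix of a bipartite graph has the block form $\mtx{0 & B\\ B^T & 0}$ and hence rank $2\rank B$. Consequently $\MRo(G)$, being the maximum of these even ranks, is itself even. Because $n$ is odd, an even number cannot equal $n$, so $\MRo(G)\le n-1<n$. With both hypotheses of Proposition \ref{zero-eval} now in hand, the conclusion $q_0(G)\ge 3$ follows immediately.

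I do not expect a genuine obstacle, since the corollary is a short deduction; the one point worth stating explicitly is \emph{why} the even-rank property is decisive. It is precisely the parity clash between the forced even rank and the odd order $n$ that guarantees $0\in\spec(A)$ for every $A\in\symo(G)$, which together with the existence of a positive and a negative eigenvalue (present because $G$ has an edge, so $A\ne 0$) produces the required third distinct value. As an alternative self-contained route that avoids citing Proposition \ref{zero-eval}, one could argue from Theorem \ref{t:bip-spec-sym} directly: the spectrum is symmetric about the origin, so if $A$ had only two distinct eigenvalues $\mu_1<\mu_2$ they would satisfy $\mu_1=-\mu_2$ and hence both be nonzero, contradicting the fact that the even-rank/odd-order parity clash forces $0\in\spec(A)$. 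Either route is brief, and I would present the Proposition \ref{zero-eval} version for economy.
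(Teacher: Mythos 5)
Your proposal is correct. The paper states this corollary with no written proof, placing it immediately after Theorem \ref{t:bip-spec-sym}, so its intended argument is the one you sketch as your ``alternative'' route: by spectral symmetry, the eigenvalues of any $A\in\symo(G)$ pair up as $\pm\mu$, and an odd order forces an unpaired eigenvalue, which must be $0$ (indeed, the odd-$n$ case of the theorem's proof exhibits the factor $x$ in $p_A$ explicitly); since $G$ has an edge, $A\ne 0$, giving a nonzero pair $\pm\mu$ and hence three distinct eigenvalues. Your primary route is genuinely different in its bookkeeping: you never invoke the full spectral symmetry, only the even-rank observation of Remark \ref{obs:bp_even_rank}, from which $\MRo(G)$ is even and therefore strictly less than the odd order $n$, and then you let Proposition \ref{zero-eval} finish (its proof supplies the positive and negative eigenvalue from $\tr A=0$ with $A\ne 0$, and $0\in\spec(A)$ from the rank deficiency). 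Both arguments are sound and equally short; yours is more modular and shows the corollary needs only the weaker even-rank fact rather than full spectral symmetry, while the paper's route makes the realized spectrum shape $\{-\mu,0,\mu\}\subseteq\spec(A)$ explicit, which is what later results in that section (e.g.\ Corollary \ref{bip-k1k}) actually exploit.
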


\begin{cor}\label{bip-k1k}
Suppose $G$ is a bipartite graph  of order $2k+1$ such that $G$ has no isolated vertices and  $\M_+(G)=k$.  Then $(k,1,k)\in\omlo(G)$.
\end{cor}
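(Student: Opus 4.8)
The plan is to realize the list $(k,1,k)$ with a single explicitly-produced matrix, combining Lemma \ref{r:m1} with the spectral symmetry of bipartite hollow matrices. First I would apply Lemma \ref{r:m1}: since $G$ has no isolated vertices, there exists $B\in\symo(G)$ with $m_1(B)=\M_+(G)=k$; that is, the least eigenvalue $\mu_1(B)$ has multiplicity exactly $k$ (and, from the construction in that lemma, $\mu_1(B)=-1$, though only the multiplicity is needed here). Because $G$ has no isolated vertices, $B$ is a nonzero hollow matrix, so it has both a negative and a positive eigenvalue; in particular $\mu_1(B)<0<\mu_q(B)$ and these two extreme eigenvalues are distinct.

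Next I would invoke Theorem \ref{t:bip-spec-sym}, which gives $\spec(B)=-\spec(B)$ since $G$ is bipartite. Spectral symmetry forces the greatest eigenvalue to be $\mu_q(B)=-\mu_1(B)$ with the same multiplicity $m_q(B)=m_1(B)=k$. At this point $2k$ of the $n=2k+1$ eigenvalues of $B$ are accounted for, namely the $k$-fold value $\mu_1(B)$ and the $k$-fold value $\mu_q(B)$. The decisive step is a counting argument on the one remaining eigenvalue $\lambda$: it lies strictly between $\mu_1(B)$ and $\mu_q(B)$ (strictly, because each extreme value already has its full multiplicity $k$), and the multiset of eigenvalues strictly interior to $(\mu_1(B),\mu_q(B))$ is itself symmetric about the origin by $\spec(B)=-\spec(B)$. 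A symmetric multiset of size one must be $\{0\}$, so $\lambda=0$ with multiplicity $1$. Hence $q(B)=3$ and $\oml(B)=(k,1,k)$, which yields $(k,1,k)\in\omlo(G)$.

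I do not expect a genuinely hard step here; the only subtlety is the parity/counting observation that the odd order $2k+1$ together with the full multiplicities at the two ends leaves exactly one interior eigenvalue, and that spectral symmetry then pins this eigenvalue to $0$. I would take care to justify that $\mu_1(B)$ and $\mu_q(B)$ are genuinely distinct and have multiplicity exactly $k$ (rather than at least $k$), since otherwise the count could absorb the middle eigenvalue into an endpoint; this is guaranteed by $m_1(B)=\M_+(G)=k$ from Lemma \ref{r:m1} and the resulting symmetry.
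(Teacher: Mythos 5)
Your proposal is correct and takes essentially the same approach as the paper: apply Lemma \ref{r:m1} to obtain $B\in\symo(G)$ with $m_1(B)=\M_+(G)=k$, then use the bipartite spectral symmetry of Theorem \ref{t:bip-spec-sym} to conclude $\spec(B)=\{(-\lam)^{(k)},0,\lam^{(k)}\}$. The only difference is that you spell out the counting argument (exact multiplicity $k$ at both extremes, odd order leaving one interior eigenvalue pinned to $0$ by symmetry) that the paper compresses into a single sentence.
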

\bpf By Lemma \ref{r:m1}, there exists $B\in\symo(G)$ such that $m_1(B)=k$.  Since $\spec(B)=-\spec(B)$, $\spec(B)=\{(-\lam)^{(k)},0,\lam^{(k)}\}$.\vspace{-8pt}
\epf
 Note that  Lemma \ref{r:m1}   can also be used to show there is an ordered multiplicity list of the form\break $(m_1,m_2,\dots,m_{2},m_1)\in\omlo(G)$ where $m_1=\M_+(G)$  whenever  $G$ is bipartite with no isolated vertices.

\begin{prop}\label{p:no121}
 Let $G$ be a graph of even order $n$ such that $\mro(G)\ge n-1$ and there exists $v\in V(G)$ such that $G-v$ is bipartite.
 If $m_1+\dots+m_{r-1}=\frac{n-2}2=m_{r+1}+\dots+m_s$, then $(m_1,\dots,m_{r-1},2,m_{r+1},\dots ,m_s)\not\in\omlo(G)$.
\end{prop}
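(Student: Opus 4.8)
The plan is to argue by contradiction, combining Cauchy interlacing with the spectral symmetry of hollow matrices described by bipartite graphs (Theorem~\ref{t:bip-spec-sym}). Suppose some $A\in\symo(G)$ realizes the list $(m_1,\dots,m_{r-1},2,m_{r+1},\dots,m_s)$, and write $p=\frac{n-2}2$ so that $n=2p+2$. Listing the eigenvalues of $A$ as $\lam_1\le\cdots\le\lam_n$, the hypothesis $m_1+\cdots+m_{r-1}=p=m_{r+1}+\cdots+m_s$ places the doubled eigenvalue $\mu_r$ exactly in the center, namely $\lam_{p+1}=\lam_{p+2}=\mu_r$, with $p$ eigenvalues strictly below it and $p$ strictly above. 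The first thing I would record is that $\mu_r\ne 0$: since $\mro(G)\ge n-1$ forces $\nul A=n-\rank A\le 1$ for every $A\in\symo(G)$, the equality $\mu_r=0$ would give $\mult_A(0)=2$, a contradiction.

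Next I would delete the distinguished vertex $v$ for which $G-v$ is bipartite and set $B=A(v)\in\symo(G-v)$. By Theorem~\ref{t:bip-spec-sym}, $\spec(B)=-\spec(B)$, and $B$ has odd order $2p+1$. The crucial step is to show that $\nul B=1$ exactly. Interlacing gives $\mult_B(0)\le\mult_A(0)+1\le 2$; on the other hand the nonzero eigenvalues of $B$ pair off as $\beta,-\beta$ by symmetry, so $\mult_B(0)$ has the same parity as the order $2p+1$ and is therefore odd. The only odd value at most $2$ is $1$, so $\nul B=1$. Consequently $B$ has exactly $p$ negative and $p$ positive eigenvalues, so its sorted list $\nu_1\le\cdots\le\nu_{2p+1}$ has central term $\nu_{p+1}=0$.

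Finally I would collide this with interlacing applied to the pair $A,B$. Cauchy interlacing yields $\lam_{p+1}\le\nu_{p+1}\le\lam_{p+2}$, that is $\mu_r\le\nu_{p+1}\le\mu_r$, so $\nu_{p+1}=\mu_r$. Thus $\nu_{p+1}$ equals both $0$ and $\mu_r\ne 0$, which is impossible; hence no such $A$ exists and the list is not in $\omlo(G)$.

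The step I expect to be the crux is pinning $\nul B$ to exactly $1$: the interlacing bound alone only yields $\nul B\le 2$, and it is the parity forced by $\spec(B)=-\spec(B)$ on the odd order $2p+1$ that rules out the even value $2$ (and $0$), leaving $\nul B=1$. Once this is in hand the symmetric spectrum makes the central eigenvalue $\nu_{p+1}$ of $B$ equal to $0$, while interlacing makes it equal to $\mu_r\ne 0$, and the contradiction is immediate. I would also double-check the two routine facts invoked, that $\mro(G)\ge n-1$ means $\mro(G)\in\{n-1,n\}$ and hence $\nul A\le 1$, and that $A(v)$ is genuinely a hollow matrix in $\symo(G-v)$ so that Theorem~\ref{t:bip-spec-sym} applies.
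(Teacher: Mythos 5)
Your proof is correct and takes essentially the same route as the paper's: delete $v$, use Theorem~\ref{t:bip-spec-sym} to force the middle eigenvalue of $B=A(v)$ to be zero, and interlace it against the doubled central eigenvalue of $A$ to contradict $\mro(G)\ge n-1$. The only differences are cosmetic: the paper reads the central zero of $B$ directly off the sorted antisymmetry $\beta_i=-\beta_{(n-1)+1-i}$ (so your parity argument pinning $\nul B=1$ is an unneeded detour), and it concludes by deducing $\mu_r=0$ and hence $\rank A\le n-2$, rather than ruling out $\mu_r=0$ at the outset as you do.
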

\bpf Let $k=\frac{n-2}2$. Suppose $m_1+\dots+m_{r-1}=\frac{n-2}2=m_{r+1}+\dots+m_s$ and $A\in\symo(G)$ has $(m_1,\dots,m_{r-1},2,$ $m_{r+1},\dots ,m_s)\in\oml(A)$.  Denote the eigenvalues of $A$ by $\alpha_1\le\dots\le \alpha_{k-1}<\alpha_k=\alpha_{k+1}<\alpha_{k+2}\le\dots\le\alpha_n$. Let $B=A(v)$, and denote its eigenvalues in order by $\beta_1\le\dots\le \beta_{n-1}$. Since $G-v$ is bipartite, $\beta_i=-\beta_{n-i}$ for $i=1,\dots,n-1$.  Since $n-1$ is odd, $\beta_k=0$.  By interlacing, $\alpha_k\le \beta_k\le \alpha_{k+1}$.  Since $\alpha_k=\alpha_{k+1}$, $\alpha_k=\alpha_{k+1}=\beta_k=0$.  Thus $\rank A\le n-2$, contradicting $\mro(G)\ge n-1$.
\epf

If $G$ is a bipartite graph and $A\in\symo(G)$, then $\spec(A)$ has the form \[\{\mu_1^{(m_1)},\dots,\mu_k^{(m_k)}, (-\mu_k)^{(m_k)},\dots,(-\mu_1)^{(m_1)}\}\mbox{ or } \{\mu_1^{(m_1)},\dots,\mu_k^{(m_k)},0^{(m_{k+1})},(-\mu_k)^{(m_k)},\dots,(-\mu_1)^{(m_1)}\}.\]  If for every $\mu_1<\dots<\mu_k$ there is an $A\in\symo(G)$ with this spectrum, then the ordered multiplicity list \[(m_1,\dots,m_k,m_k,\dots,m_1)\mbox{ or }(m_1,\dots,m_k,m_{k+1},m_k,\dots,m_1)\] is called {\em bipartite spectrally arbitrary} for $G$.

\begin{rem}
Suppose $G$ is a bipartite graph and $A\in\symo(G)$ has $\oml(A)=(m_1,m_1)$ or $\oml(A)=(m_1,m_2,m_1)$. Then $\oml(A)=(m_1,m_1)$ or $(m_1,m_2,m_1)$ is bipartite spectrally arbitrary for $G$ because $\spec(A)= \{(-\lambda)^{(m_1)},$ $\lambda^{(m_1)}\}$ or  $\spec(A)=\{(-\lambda)^{(m_1)},$ $0^{(m_2)},\lambda^{(m_1)}\}$ and any choice of the nonzero eigenvalue can be realized by multiplying $A$ by a scalar.
\end{rem}

The next result follows from Theorem \ref{t:bip-spec-sym} and 
Corollary \ref{c:qo2-cartprod}.

\begin{cor} Let $G$ be a bipartite graph. Then $q_0(G\cp K_2\cp\cdots\cp K_2)\le q_0(G)+1$.  If there exits $B\in\symo(G)$ with $q(B)=q_0(G)$ and $0\not\in\spec(B)$,  then $q_0(G\cp K_2\cp\cdots\cp K_2)\le q_0(G)$.   
If  $q_0(G)=2$, then $q_0(G\cp K_2\cp\cdots\cp K_2)=2$.  
\end{cor}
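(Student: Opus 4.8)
The plan is to derive this corollary directly from the two tools cited: the spectral symmetry for bipartite graphs (Theorem \ref{t:bip-spec-sym}) and the Cartesian product machinery (Corollary \ref{c:qo2-cartprod}). The key observation is that Corollary \ref{c:qo2-cartprod} has hypotheses phrased in terms of the existence of a matrix $A\in\symo(G)$ realizing $q_0(G)$ with spectrum symmetric about the origin, and Theorem \ref{t:bip-spec-sym} guarantees that \emph{every} matrix in $\symo(G)$ has this symmetry when $G$ is bipartite. So for a bipartite graph the symmetry hypothesis is automatic, and the corollary follows by simply verifying that each clause of Corollary \ref{c:qo2-cartprod} applies.

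First I would argue the general bound. Let $G$ be bipartite, and choose any $B\in\symo(G)$ with $q(B)=q_0(G)$; such a $B$ exists by the definition of $q_0(G)$ as a minimum. By Theorem \ref{t:bip-spec-sym}, $\spec(B)=-\spec(B)$, so $B$ satisfies the hypothesis of Corollary \ref{c:qo2-cartprod}. That corollary then yields $q_0(G\cp K_2\cp\cdots\cp K_2)\le q_0(G)+1$, which is the first assertion. For the second assertion, if in addition there exists such a $B$ with $0\notin\spec(B)$, then Corollary \ref{c:qo2-cartprod} gives the strict improvement $q_0(G\cp K_2\cp\cdots\cp K_2)\le q_0(G)$.

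The third clause, $q_0(G)=2\implies q_0(G\cp K_2\cp\cdots\cp K_2)=2$, requires slightly more care, and this is the only place any real content enters. Suppose $q_0(G)=2$ and take $B\in\symo(G)$ with $q(B)=2$. By Theorem \ref{t:bip-spec-sym} the two distinct eigenvalues must be $\pm\lambda$ for some $\lambda>0$ (a single pair symmetric about the origin, with equal multiplicities), and in particular $0\notin\spec(B)$ — for if $0$ were one of only two distinct eigenvalues, symmetry would force $0$ to be the \emph{only} eigenvalue, contradicting $G\ne\overline{K_n}$ implicit in $q_0(G)=2$. Hence the second clause applies and gives $q_0(G\cp K_2\cp\cdots\cp K_2)\le 2$; combined with the trivial lower bound $q_0(H)\ge 2$ for any graph $H$ with an edge (here $H=G\cp K_2\cp\cdots\cp K_2$ has edges since $G$ does), we conclude equality.

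The step most likely to need attention is the third clause, specifically the claim that a bipartite graph realizing $q_0=2$ cannot have $0$ in that two-element spectrum. This is where one must invoke the spectral symmetry to rule out the degenerate case rather than treat it as automatic. Once that is handled, everything else is a direct citation of the two named results, so I do not anticipate any substantive obstacle beyond bookkeeping.
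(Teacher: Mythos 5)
Your proposal is correct and follows exactly the paper's route: the paper derives this corollary by combining Theorem \ref{t:bip-spec-sym} (spectral symmetry for bipartite graphs) with Corollary \ref{c:qo2-cartprod}, which is precisely what you do. Your extra verification for the third clause --- that a two-element symmetric spectrum must be $\{(-\lambda)^{(k)},\lambda^{(k)}\}$ with $\lambda\ne 0$, so the stronger bound applies and the trivial lower bound $q_0\ge 2$ gives equality --- is a detail the paper leaves implicit, and you handle it correctly.
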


A real (respectively, complex) $n\x n$ matrix $U$ is orthogonal (respectively, unitary) if $U^TU=I_n$ (respectively, $U^*U=I_n$). 
Orthogonal matrices play an important role in the study of bipartite graphs $G$ having $q_0(G)=2$, as they do  in the study of $q(G)$ for arbitrary graphs \cite{AACFMN}.  A graph $G$ is {\em potentially hollow  orthogonal}  if there is an orthogonal  matrix $U\in \symo(G)$.  


\begin{rem}\label{r:bipart-q0-2}
Let $G$ be a graph.  Since the only possible eigenvalues of an orthogonal matrix are $1$ and $-1$, it is immediate that if $U\in\symo(G)$ is orthogonal, then $q_0(G)=2$ (since  $G$ must have an edge to allow an orthogonal matrix).  Now suppose that $G$ is  a bipartite graph, the  order of $G$ is $n=2k$, and $q_0(G)=2$. For $A\in\symo(G)$ with $q(A)=2$, $\spec(A)=\{(-\mu)^{(k)},\mu^{(k)}\}$  and  $\frac 1\mu A$ is orthogonal (because any real symmetric matrix with all eigenvalues equal to $-1$ or $1$ is orthogonal).  Therefore, $G$ is potentially hollow  orthogonal.   
Thus a bipartite graph $G$ is potentially hollow  orthogonal if and only if $q_0(G)=2$. 
\end{rem}

When studying the IEP-$G$ (without the restriction that matrices are hollow), we have the opportunity to shift (add $cI_n$) as well as  to scale (multiply by a scalar), so any matrix $A\in\sym(G)$ with $q(A)=2$ can be modified to obtain an orthogonal matrix in $\sym(G)$.  Thus $q(G)=2$ implies $G$ is potentially orthogonal.  However, this need not be true for hollow matrices of  graphs that are not bipartite.  
Recall that $q_0(K_n)=2$, and choosing $n$ odd implies $K_n$ is not potentially orthogonal.

Although the emphasis in this section is on bipartite graphs, orthogonal matrices play a broader role, as illustrated in the next result.

\begin{prop}\label{oml-rr} Let $G$ be a graph of order $2r$. Then $(r,r)\in\omlo(G)$ if and only $G$ is hollow potentially orthogonal.
\end{prop}
\bpf Suppose there exists $A\in\symo(G)$ with $\oml(A)=(r,r)$.  Then the trace condition \eqref{trace-eq}
 implies $\spec(A)=\{(-\lam)^{(r)},\lam^{(r)}\}$.  Then $\frac 1 \lam A$ is orthogonal.

If $A\in\symo(G)$ is orthogonal, then every eigenvalue is equal to  1 or $-1$, and the trace condition implies $\oml(A)=(r,r)$.
\epf

\begin{cor}\label{leaf-no-oml-rr}  Let $G$ be a graph such that vertices $v$ and $w$  have a unique common neighbor.    Then $(r,r)\not\in\omlo(G)$.  In particular, if $G$ is a connected graph of order at least $3$ that has a leaf, then $(r,r)\not\in\omlo(G)$.
\end{cor}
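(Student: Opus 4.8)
The plan is to reduce to Proposition \ref{oml-rr}, which states that $(r,r)\in\omlo(G)$ if and only if $G$ is hollow potentially orthogonal. Hence it suffices to show that no orthogonal matrix lies in $\symo(G)$. I would argue by contradiction: suppose $A=[a_{ij}]\in\symo(G)$ is orthogonal. Since $A$ is symmetric we have $A^T=A$, so orthogonality gives $A^2=A^TA=I_n$. Because $v\neq w$, this forces the off-diagonal entry $(A^2)_{vw}$ to equal $0$.

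The key step is to evaluate $(A^2)_{vw}=\sum_k a_{vk}a_{kw}$ and isolate the surviving summands. Since the diagonal of $A$ is zero, the terms with $k=v$ and $k=w$ both vanish (each carries a factor $a_{vv}=0$ or $a_{ww}=0$). For an index $k\notin\{v,w\}$, the product $a_{vk}a_{kw}$ is nonzero precisely when $k$ is adjacent to both $v$ and $w$, i.e.\ when $k$ is a common neighbor of $v$ and $w$. By hypothesis there is exactly one such vertex $u$ (and necessarily $u\notin\{v,w\}$, as adjacency is irreflexive), so the sum collapses to $(A^2)_{vw}=a_{vu}a_{uw}$. Because $vu,uw\in E(G)$ we have $a_{vu}\neq 0$ and $a_{uw}\neq 0$, whence $(A^2)_{vw}\neq 0$. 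This contradicts $(A^2)_{vw}=0$, so $G$ is not hollow potentially orthogonal and therefore $(r,r)\notin\omlo(G)$.

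For the ``in particular'' clause, I would manufacture a pair of vertices with a unique common neighbor from the leaf. Let $\ell$ be a leaf with unique neighbor $u$. Since $G$ is connected of order at least $3$, the vertex $u$ cannot have degree $1$, so it has a neighbor $w\neq\ell$. Any common neighbor of $\ell$ and $w$ must in particular be a neighbor of $\ell$, hence equal to $u$; and $u$ is indeed adjacent to both $\ell$ and $w$. Thus $\ell$ and $w$ have $u$ as their unique common neighbor, and the first part applies.

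I expect the only delicate point to be the bookkeeping in the expansion of $(A^2)_{vw}$: one must check carefully that the diagonal-index terms drop out by hollowness and that ``unique common neighbor'' leaves exactly one surviving summand, whose value is a product of two nonzero entries and so cannot be cancelled by anything. Everything else is routine.
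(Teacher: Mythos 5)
Your proof is correct and follows essentially the same route as the paper: reduce via Proposition \ref{oml-rr} to showing $\symo(G)$ contains no orthogonal matrix, then observe that hollowness and the unique common neighbor force $(A^2)_{vw}=a_{vu}a_{uw}\neq 0$, and derive the leaf case by pairing the leaf with a second neighbor of its support vertex. The only difference is that you spell out the bookkeeping in the expansion of $(A^2)_{vw}$ that the paper leaves implicit.
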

\bpf Let $u$ be the unique common neighbor of  $v$ and $w$.  
Let $A\in\symo(G)$.  Then $(A^2)_{vw}= a_{vu}a_{uw}\ne 0$, so $A$ is not orthogonal.  So $(r,r)\not\in\omlo(G)$ by Proposition \ref{oml-rr}.  If $G$ is  connected with $|V(G)|\ge 3$ and has a leaf $v$, then the  neighbor of $v$ is the unique common neighbor of at least two vertices of $G$.
\epf

The {\em pattern of a matrix} $B=[b_{ij}]$ is defined by $\PP(B)=[\beta_{ij}]$ where $\beta_{ij}=*$ if $b_{ij}\neq 0$ and $\beta_{ij}=0$ if $b_{ij}=0$ (note $B$ need not be square). The {\em qualitative class of an $r\x r'$} nonzero pattern matrix $\B=[\beta_{ij}]$ is $\QQ(\B)=\{B\in \R^{r\x r'}: \PP(B)=\B\}$. Let $G$ be a bipartite graph with partite sets $X=\{x_1,\dots,x_{r}\}$ and $X'=\{x'_1,\dots,x'_{r'}\}$. The {\em bigraph pattern} of $G$ is the $r\x r'$ nonzero pattern matrix $\B_G=[\beta_{ij}]$ having $\beta_{ij}=*$ if $x_ix'_j\in E(G)$ and $\beta_{ij}=0$ if $x_ix'_j\not\in E(G)$.  A pattern matrix $\B$ is {\em potentially orthogonal} 
 if there is an orthogonal 
 matrix $U\in \QQ(\B)$, and potentially unitary is defined analogously using unitary matrices. 
The next result does not restrict matrices to being hollow.

\begin{thm}\label{t:bipart-unitary}
Let $G$ be a bipartite graph with no isolated vertices having partite sets of orders $r$ and $r'$.  Then $q(G)=2$ if and only if $r=r'$ and $\B_G$ is potentially orthogonal.
\end{thm}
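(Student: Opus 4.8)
The plan is to move between two objects: a symmetric orthogonal matrix in $\sym(G)$, which exists precisely when $q(G)=2$ (as recalled in the excerpt, shifting and scaling converts any $A\in\sym(G)$ with $q(A)=2$ into one with eigenvalues $\pm1$, i.e.\ a symmetric orthogonal matrix, and conversely a symmetric orthogonal matrix has $q\le 2$); and an orthogonal matrix carrying the bigraph pattern $\B_G$. Order the vertices with the partite set $X$ of order $r$ first, and write any $A\in\sym(G)$ as $A=\begin{bmatrix}D_1&B\\B^T&D_2\end{bmatrix}$, where $D_1,D_2$ are diagonal (bipartiteness forces zero off-diagonal entries inside each partite set) and $B$ is $r\times r'$ with pattern $\B_G$.

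For the forward direction I would start from a symmetric orthogonal $A\in\sym(G)$ and exploit $A^2=I$. Comparing the diagonal blocks gives $BB^T=I_r-D_1^2$ and $B^TB=I_{r'}-D_2^2$; the crucial observation is that since $D_1,D_2$ are diagonal, $BB^T$ and $B^TB$ are forced to be diagonal. The $i$th diagonal entry of $BB^T$ equals both the squared norm of row $i$ of $B$ and $1-(D_1)_{ii}^2$; it is nonnegative, and it cannot be zero since that would make $x_i$ isolated. Using that $G$ has no isolated vertices, every diagonal entry of $BB^T$ is strictly positive, so $BB^T$ is invertible and $B$ has full row rank $r$; the symmetric argument on $B^TB$ gives full column rank $r'$. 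Hence $r=r'$ and $B$ is invertible.

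It then remains to renormalize $B$ into an orthogonal matrix without changing its pattern. Putting $\Delta=BB^T$ (a positive diagonal matrix) and $U=\Delta^{-1/2}B$, a short computation gives $UU^T=\Delta^{-1/2}BB^T\Delta^{-1/2}=I_r$, so the square matrix $U$ is orthogonal; and since $\Delta^{-1/2}$ only rescales the rows of $B$ by positive scalars, $U$ has the same zero/nonzero pattern as $B$, namely $\B_G$. Thus $\B_G$ is potentially orthogonal. For the converse I would take an orthogonal $U\in\QQ(\B_G)$ and set $A=\begin{bmatrix}0&U\\U^T&0\end{bmatrix}\in\symo(G)\subseteq\sym(G)$; then $A^2=\begin{bmatrix}UU^T&0\\0&U^TU\end{bmatrix}=I$, so $A$ is symmetric orthogonal, $q(A)\le2$, and since $A$ is not scalar and $G$ has an edge, $q(G)=2$.

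The main obstacle is precisely that we are in the non-hollow setting, so $A$ need not be block-antidiagonal and $B$ need not itself be orthogonal. The whole argument hinges on the single observation that $A^2=I$ forces $BB^T=I_r-D_1^2$ to be diagonal: this is what converts the no-isolated-vertices hypothesis into full rank of $B$ (hence $r=r'$), and what guarantees that the positive diagonal rescaling $\Delta^{-1/2}$ turns $B$ into an orthogonal matrix while preserving the pattern $\B_G$.
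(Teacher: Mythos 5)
Your proof is correct, and its core argument is genuinely different from the paper's. Both proofs share the same frame: $q(G)=2$ is equivalent to the existence of a symmetric orthogonal matrix in $\sym(G)$ (shift and scale), bipartiteness forces the block form $\mtx{D_1 & B\\ B^T & D_2}$ with $D_1,D_2$ diagonal and $B\in\QQ(\B_G)$, and the converse direction (embedding an orthogonal $U\in\QQ(\B_G)$ as the off-diagonal block of a hollow matrix) is identical. The difference lies in how the forward direction extracts an orthogonal matrix of pattern $\B_G$. The paper reads off the \emph{off-diagonal} block of $U^2=I$: each edge $x_ix'_j$ forces $d'_j=-d_i$, connectedness propagates this to $D=dI_r$ and $D'=-dI_{r'}$, hence $BB^T=(1-d^2)I_r$ and $B^TB=(1-d^2)I_{r'}$, and the single scalar rescaling $\frac{1}{\sqrt{1-d^2}}B$ is orthogonal; disconnected graphs then require a separate direct-sum argument over components. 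You use only the \emph{diagonal} blocks: $BB^T=I_r-D_1^2$ and $B^TB=I_{r'}-D_2^2$ are forced to be diagonal, the no-isolated-vertices hypothesis makes every diagonal entry strictly positive, so $B$ has full row and column rank (giving $r=r'$), and the row rescaling $\Delta^{-1/2}B$ with $\Delta=BB^T$ is orthogonal with the same pattern $\B_G$. Your route is more elementary and uniform: it never invokes connectivity and needs no case analysis over components (a step the paper must carry out separately, and where it implicitly uses that $q(G)=2$ forces $q(G_i)=2$ on each component $G_i$). The paper's route, in exchange, yields extra structural information for connected $G$: any symmetric orthogonal matrix in $\sym(G)$ has diagonal constant $d$ on one partite set and $-d$ on the other, so the orthogonal matrix in $\QQ(\B_G)$ can be taken to be a scalar multiple of the block $B$ itself rather than a row-by-row rescaling of it.
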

\bpf It is immediate that  $\B_G$ is potentially orthogonal implies $q(G)=2$.  For the converse, we begin with the case in which $G$ is connected and $q(G)=2$.  Then $G$ has at least two vertices and and is potentially orthogonal by Remark \ref{r:bipart-q0-2}.  Let the two partite sets of vertices be $X = \{1, \ldots, r\}$ and $Y = \{r+1, \dots, n\}$. Then $|X| = r$ and $|Y| = r’:=n-r$.  Let $U\in\sym(G)$ be unitary.  Then $U=\mtx{D & B\\B^T & D'}$ with $D=\diag(d_1,\dots,d_r)$ and $D'=\diag(d'_1,\dots,d'_{r'})$, and
\[\mtx{I_r& O_{r,r'}\\O_{r,r'}& I_{r'}}=I = U^TU = U^2 = \mtx{D^2+BB^T & DB + BD'\\ B^TD + D'B^T & (D')^2+B^TB}.\]
Suppose that $b_{i,r+j}\ne 0$, i.e. $x_iy_j\in E(G)$.  Then
\[0=(U^2)_{i,r+j}=d_ib_{i,r+j}+b_{i,r+j}d'_j, \mbox{ so } d'_j=-d_i.\]
Since $G$ is connected, there is a path from any vertex to any other vertex.  Iterating $d'_j=-d_i$ for $x_iy_j\in E(G)$ shows that $D=dI_r$ and $D'=-dI_{r'}$ for some $d\in \R$.  Then $I_r=D^2+BB^T$ implies  $BB^T=(1-d^2)I_r$. Since $BB^T$ is positive semidefinite, $-1< d< 1$.  Also, $I_{r'}=(D')^2+B^TB$ implies   $B^TB=(1-d^2)I_{r'}$.   Thus $B'=\frac 1{\sqrt{1-d^2}}B\in\QQ(\B_G)$ is unitary, so $\B_G$ is potentially unitary.  Since both $BB^T$ and $B^TB$ are full rank, $r=r'$.

Finally, suppose $G=G_1\dcup\dots \dcup G_h$ where  $G_i$ are disjoint connected graphs each having order at least two for $i=1,\dots, h$.  Choose unitary $B_i\in\QQ(\B_{G_i})$ and define $U=B_1\oplus\dots\oplus B_h$.  Then $U$ is orthogonal, so $\B_G$ is potentially orthogonal.
\epf


Theorem \ref {t:bipart-unitary} would be false  if isolated vertices are allowed, as the next example shows.  Thus, the proof presented here corrects a minor error in Proposition 6.4 in \cite{AACFMN} (where the exclusion of isolated vertices was omitted).  

\begin{ex} Consider the graph $G=K_2\dcup K_1$. The matrix $A=\mtx{0 & 1 & 0\\1 & 0 & 0\\0 & 0 & 1}\in\sym(G)$, so $q(G)=2$.  However, $\B_G$ cannot be potentially orthogonal because the two partite sets cannot have equal size.
\end{ex}

\begin{cor}\label{t:bipart-unitary-q}Let $G$ be a  bipartite graph with no isolated vertices.  Then $q_0(G)=2$ if and only if $q(G)=2$. \end{cor}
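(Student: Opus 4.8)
The plan is to prove the biconditional in \Cref{t:bipart-unitary-q} by combining the general result \Cref{t:bipart-unitary} with the characterization of hollow orthogonality from \Cref{r:bipart-q0-2}. The forward direction, $q_0(G)=2 \implies q(G)=2$, is essentially free: since $\symo(G)\subseteq\sym(G)$, any matrix $A\in\symo(G)$ witnessing $q_0(G)=2$ also lies in $\sym(G)$, so $q(G)\le q_0(G)=2$, and $q(G)\ge 2$ because $G$ has an edge (it has no isolated vertices and, being bipartite with $q_0(G)=2$, is not the empty graph). Thus $q(G)=2$.

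The substantive direction is the converse, $q(G)=2 \implies q_0(G)=2$. The strategy is to feed the hypothesis into \Cref{t:bipart-unitary}: since $G$ is bipartite with no isolated vertices and $q(G)=2$, that theorem tells us the two partite sets have equal size $r=r'$ and the bigraph pattern $\B_G$ is potentially orthogonal, so there is an orthogonal matrix $B\in\QQ(\B_G)$. The key observation is that such a $B$ can be assembled into a \emph{hollow} orthogonal matrix described by $G$: setting
\[
U=\mtx{O_r & B\\ B^T & O_r},
\]
one checks that $U^TU=\mtx{B B^T & O_r\\ O_r & B^TB}=I_{2r}$ because $B$ is orthogonal, so $U$ is orthogonal; moreover $U\in\symo(G)$ since $U$ is symmetric, has zero diagonal (the diagonal blocks are zero), and its off-diagonal block $B$ realizes exactly the adjacencies of $G$ by construction of $\B_G$. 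Then \Cref{r:bipart-q0-2} (equivalently, the observation that an orthogonal matrix has eigenvalues only $\pm1$) gives $q_0(G)=2$.

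The main obstacle, and the step deserving the most care, is ensuring that the orthogonal matrix produced by \Cref{t:bipart-unitary} can be taken to have the hollow block form above rather than some arbitrary orthogonal $U\in\sym(G)$ with possibly nonzero diagonal. The point is that \Cref{t:bipart-unitary} delivers orthogonality of the \emph{pattern} $\B_G$, i.e.\ an orthogonal $B$ in the qualitative class $\QQ(\B_G)$, and from any such $B$ the block construction manufactures a genuinely hollow orthogonal matrix in $\symo(G)$. I would state this block construction explicitly and verify the three defining conditions of $\symo(G)$ (symmetry, zero diagonal, correct off-diagonal support) together with orthogonality, since these verifications are exactly what bridges the pattern-level conclusion of \Cref{t:bipart-unitary} to the hollow-matrix conclusion required for $q_0(G)=2$. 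One should also note that the equality $r=r'$ furnished by \Cref{t:bipart-unitary} is what makes the block matrix square and hence the construction valid.
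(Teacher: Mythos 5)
Your proof is correct and takes essentially the same route the paper intends: the corollary is stated without proof precisely because it follows from combining Theorem \ref{t:bipart-unitary} with Remark \ref{r:bipart-q0-2}, and the bridge between the pattern-level conclusion and a hollow matrix is exactly the block construction $U=\mtx{O_r & B\\ B^T & O_r}$ that you spell out (which is also how the ``immediate'' direction of Theorem \ref{t:bipart-unitary} works). Your forward direction via $\symo(G)\subseteq\sym(G)$ and your converse via the orthogonal hollow block matrix match the paper's intended argument.
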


The next result provides an example  of a bipartite graph $G$ having $q_0(G)> q(G)$. A \emph{spider} is a tree with exactly one vertex $v$ of degree 3 or more.  The spider $S(\ell_1,\dots,\ell_r)$ is the tree such that $S(\ell_1,\dots,\ell_r)-v=P_{\ell_1}\dcup\dots\dcup P_{\ell_r}$.

\begin{ex} Consider the graph $S(2,1,1)$. It was shown in \cite{IEPG2} that  $\oml(S(2,1,1))=\{(1,2,1,1),(1,1,2,1),$ $(1,1,1,1,1)\}$. Neither $(1,2,1,1)$ nor $(1,1,2,1)$ can be the ordered multiplicity list of a  matrix in $\symo(S(2,1,1))$ because neither allows a spectrum that is symmetric about the origin.  Thus $q_0(S(2,1,1))=5>4=q(S(2,1,1))$.
\end{ex}

Knowledge about potentially orthogonal patterns can sometimes be used with the bigraph pattern $\B_G$ of $G$ to construct a matrix $A\in\symo(G)$ with $q(A)=2$.   Sources of information about nonzero pattern matrices that allow orthogonality  include \cite{JMN08, diunitary, SS08} and the references therein.
Much of the literature concerns potentially unitary patterns rather than potentially orthogonal patterns, and it is known that there are patterns that  are potentially unitary but not potentially orthogonal \cite{hall}. However, few such examples are known, and many of the proofs of results that are stated for potentially unitary work for potentially orthogonal, as is the case with the next result. 
 A pattern $\B'=[\beta'_{ij}]$ is a {\em subpattern} of a pattern $\B=[\beta_{ij}]$ if $\beta'_{ij}=*$ implies $\beta_{ij}=*$, and in this case, $\B$ is a {\em superpattern} of  $\B'$.  The {\em $n\x n$ upper Hessenberg pattern} is 
 \[\HH_n=
 \mtx{* & * & \dots & * & * & *\\
 * & * & \dots & * & * & *\\
 0 & * & \dots & * & * & *\\
 \vdots & \vdots & \ddots & \vdots & \vdots\\
 0 & 0 & \dots & * & * & *\\
  0 & 0 & \dots & 0 & * & *}.\]
 \begin{thm}{\rm \cite{JMN08}} Any superpattern of the upper Hessenberg pattern $\HH_n$ is potentially orthogonal.
 \end{thm}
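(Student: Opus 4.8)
The plan is to read the property of being potentially orthogonal columnwise. A pattern $\B$ is potentially orthogonal exactly when one can find orthonormal vectors $c_1,\dots,c_n\in\R^n$ (the columns of the matrix) whose supports are precisely the prescribed column supports of $\B$. So first I would record the support structure forced by the Hessenberg part: writing $S_j=\{i:\B_{ij}=*\}$ for the required support of column $j$, the inclusion $\B\supseteq\HH_n$ gives $\{1,\dots,\min(j+1,n)\}\subseteq S_j$, and in a superpattern $S_j$ may be strictly larger. The one numerical fact I need is that $|S_j|\ge j+1$ for $j\le n-1$, while $S_n=\{1,\dots,n\}$ is full.

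Next I would build the columns from left to right. Having produced $c_1,\dots,c_{j-1}$ orthonormal with the correct supports, I seek $c_j$ supported inside $S_j$, orthogonal to $c_1,\dots,c_{j-1}$, of unit norm, and with every one of its $S_j$-entries nonzero. Treating the $|S_j|$ entries indexed by $S_j$ as unknowns, the orthogonality relations $\langle c_j,c_k\rangle=0$ for $k<j$ are $j-1$ homogeneous linear equations, so the solution space $W_j\subseteq\R^{S_j}$ satisfies $\dim W_j\ge |S_j|-(j-1)\ge 2$ for $j\le n-1$ (and $\ge 1$ for $j=n$). Thus there is a positive-dimensional space of admissible $c_j$; the vectors in $W_j$ that fail to have full $S_j$-support lie in the finitely many coordinate hyperplanes $\{x_i=0\}$, so provided $W_j$ is not contained in any such hyperplane a generic choice has full support, and normalizing yields $c_j$; once all $n$ columns are built, $U=[\,c_1\mid\cdots\mid c_n\,]$ is orthogonal and lies in $\QQ(\B)$. (For the bare Hessenberg pattern this machinery is unnecessary: the Givens product $R_1R_2\cdots R_{n-1}$, with $R_k$ a rotation in the $(k,k+1)$-plane, is visibly an orthogonal matrix realizing $\HH_n$ for generic angles.)

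The hard part is exactly the proviso, namely ensuring $W_j\not\subseteq\{x_i=0\}$ for every $i\in S_j$. Since $W_j$ is the orthogonal complement in $\R^{S_j}$ of $\Span\{c_k|_{S_j}:k<j\}$, containment in $\{x_i=0\}$ is equivalent to $e_i|_{S_j}\in\Span\{c_k|_{S_j}:k<j\}$; as that span has dimension at most $j-1\le|S_j|-2$, this is a proper closed algebraic condition and therefore fails for generic admissible earlier columns. The subtlety is that the earlier columns are not free but themselves constrained, so I would run a single induction proving the stronger statement that a generic admissible partial orthonormal system $c_1,\dots,c_j$ avoids all finitely many degeneracies that the later steps require. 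The genuinely tight case is the last column: $W_n$ is forced to be one-dimensional, so $c_n$ is determined up to sign, and full support now means precisely that $\Span\{c_1,\dots,c_{n-1}\}$ contains no standard basis vector $e_i$ — again a generic condition on the freely built earlier columns. Making these genericity requirements simultaneously satisfiable — by checking that the admissible systems form an irreducible variety, or by exhibiting one good system outright — is the crux and the step demanding the most care.
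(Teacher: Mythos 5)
First, a framing point: the paper does not prove this theorem at all --- it is imported verbatim from \cite{JMN08} --- so there is no internal proof to compare against, and your proposal has to stand on its own as a complete argument. It does not. Your setup (build orthonormal columns $c_1,\dots,c_n$ with $\supp(c_j)=S_j$ greedily, using $\dim W_j\ge |S_j|-(j-1)\ge 2$ for $j\le n-1$) is a reasonable opening, but the step you yourself label ``the crux'' --- that the finitely many degeneracy conditions $W_j\subseteq\{x_i=0\}$ can be avoided simultaneously by a suitable choice of the earlier, constrained columns --- is exactly the content of the theorem, and you leave it unproven, merely listing three possible strategies (an inductive genericity statement, irreducibility of the variety of admissible systems, an explicit construction) without carrying out any of them.

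Moreover, the gap cannot be closed by the reasoning you put on the table, because your justification for avoidance is a pure dimension count (``the span has dimension at most $j-1\le |S_j|-2$, so the condition is proper and fails generically''), and that implication is false: a condition can satisfy the dimension inequality and still hold identically on the whole set of admissible earlier columns. Concretely, take $n=5$ and column supports $S_1=\{1,2\}$, $S_2=\{2,3,4\}$, $S_3=S_4=S_5=\{1,\dots,5\}$. These satisfy your only stated numerical hypotheses ($|S_j|\ge j+1$ for $j\le 4$, $S_5$ full), yet for \emph{every} admissible $c_1$ (unit vector with support exactly $\{1,2\}$) the restriction $c_1|_{S_2}$ is a nonzero multiple of $e_2|_{S_2}$, so $W_2=\bigl(c_1|_{S_2}\bigr)^{\perp}$ is entirely contained in the coordinate hyperplane $\{x_2=0\}$: here $\dim\Span\{c_1|_{S_2}\}=1\le|S_2|-2$, yet no admissible $c_2$ exists for any choice of $c_1$ (equivalently, this pattern violates quadrangularity: the supports of columns $1$ and $2$ meet in a single index, so they cannot be orthogonal). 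Of course this support configuration is not a superpattern of $\HH_5$, but that is precisely the point: a correct proof must invoke the containments $\{1,\dots,k+1\}\subseteq S_k$ at every stage to rule out such collapses, e.g.\ to show that no $e_i|_{S_j}$ ever lies in $\Span\{c_k|_{S_j}:k<j\}$, and must in particular handle the last column, where $W_n$ is one-dimensional and no genericity is available at all, so every requirement must have been arranged in advance. Formulating and proving the inductive invariant that accomplishes this is the theorem; it is absent from your write-up.
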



It should be noted that in the most of the work on potentially unitary patterns (including the papers cited), there is no assumption  that matrices be symmetric (even if pattern is symmetric).  Such results apply to demonstrating the existence of unitary matrices for bipartite graphs (using the bigraph pattern), but not to symmetric patterns that are not bipartite.


\section{Hollow inverse eigenvalue problem for specific families and small graphs}\label{s:fam-small}

We present the solution of the HIEP-$G$ for paths and complete bipartite graphs and  results on ancillary parameters such as $q_0$ for additional families of graphs in Section \ref{ss:fam}.  In  Section \ref{ss:small} we present results  for graphs of order at most 4.

\subsection{Families of graphs}\label{ss:fam}

\subsection*{Paths}


A stronger version of the HIEP-$G$ is solved for paths in \cite{AG15}, where a method is given to construct an $n\x n$ hollow matrix with any  set of $n$ distinct real numbers $\lambda_i$ that is symmetric about the origin as eigenvalues and additional numbers $\beta_i$ satisfying other conditions as the normalizing numbers (see \cite{AG15} for a definition of normalizing numbers); the solution to the  stronger version of the IEP-$G$ with normalizing numbers is also known \cite{AG15}.

    \begin{thm}\rm \cite{AG15} The multisets $S=\{\lambda_i:i=1,\dots, n\}$ and $T=\{\beta_i:i=1,\dots,n\}$ are the eigenvalues and normalizing numbers a of a symmetric tridiagonal matrix with positive sub- and superdiagonal entries
if and only if the following two conditions
are satisfied:
\ben[$(i)$] 
\item The numbers $\lambda_1,\dots,\lambda_n$ are real, distinct, and can be ordered so that $\lambda_1<\dots <\lambda_n$, $\lambda_k = -\lambda_{n+1-k}$ for $k = 1, 2,\dots , \lf \frac n 2\rf$, and $\lambda_{\lf \frac n 2\rf+1} = 0$ if $n$ is odd.
\item The numbers $\beta_1,\dots,\beta_n$ are positive, $\beta_1+\dots+\beta_n=1$ and can be ordered so that  $\beta_k = \beta_{n+1-k}$ for $k = 1, 2,\dots , \lf \frac n 2\rf$.
\een
\end{thm}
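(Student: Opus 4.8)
The plan is to use the classical correspondence between Jacobi matrices (symmetric tridiagonal matrices with positive off-diagonal entries) and their spectral data, restricted to the hollow (zero-diagonal) case relevant here. Recall that for such a matrix $J$ with orthonormal eigenvectors $v_1,\dots,v_n$ for eigenvalues $\lambda_1<\dots<\lambda_n$, the normalizing numbers are $\beta_i=(v_i)_1^2$, i.e.\ the weights of the spectral measure of $J$ associated with $e_1$. Two general facts I would invoke at the outset: positive off-diagonal entries force the eigenvalues to be simple (so they can be listed as $\lambda_1<\dots<\lambda_n$) and make $e_1$ a cyclic vector, whence each $\beta_i>0$; and since the eigenvectors are orthonormal, the first row of the eigenvector matrix is a unit vector, so $\sum_i\beta_i=1$. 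These already account for the positivity and the normalization $\beta_1+\dots+\beta_n=1$ in condition $(ii)$.

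For the forward (necessity) direction I would argue the two symmetries separately. Because $P_n$ is bipartite, Theorem~\ref{t:bip-spec-sym} gives $\spec(J)=-\spec(J)$, which together with simplicity yields $\lambda_k=-\lambda_{n+1-k}$ and the central zero eigenvalue when $n$ is odd; this is condition $(i)$. For the weight symmetry in $(ii)$, I would conjugate by the sign matrix $D=\diag(1,-1,1,\dots)$, noting $DJD=-J$ since the diagonal is zero and the off-diagonal signs flip. Thus if $Jv=\lambda v$ then $J(Dv)=-\lambda(Dv)$, while $(Dv)_1=v_1$; hence the normalizing number at $-\lambda$ equals the one at $\lambda$, giving $\beta_k=\beta_{n+1-k}$.

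For the converse (sufficiency) I would build the matrix from the data. Form the measure $\mu=\sum_{i=1}^n\beta_i\delta_{\lambda_i}$; conditions $(i)$ and $(ii)$ make $\mu$ invariant under $x\mapsto -x$. Applying Gram--Schmidt to $1,x,\dots,x^{n-1}$ in $L^2(\mu)$ (these are independent because $\mu$ has exactly $n$ atoms of positive mass) produces orthonormal polynomials $p_0,\dots,p_{n-1}$ obeying a three-term recurrence $xp_k=b_kp_{k+1}+a_kp_k+b_{k-1}p_{k-1}$ with $b_k>0$; the resulting Jacobi matrix $J$ has $\lambda_1,\dots,\lambda_n$ as eigenvalues. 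Symmetry of $\mu$ forces the odd moments to vanish, so each $p_k$ has parity $(-1)^k$; then $a_k=\int x\,p_k^2\,d\mu=0$ because the integrand is odd, and $J$ is hollow. Finally, since $p_0\equiv 1$ corresponds to $e_1$ under the isometry $p_k\mapsto e_{k+1}$ that realizes multiplication-by-$x$ as $J$, the spectral measure of $J$ at $e_1$ is exactly $\mu$, so the normalizing numbers of $J$ are the prescribed $\beta_i$.

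The step I expect to be the main obstacle is the last one: verifying that the constructed matrix reproduces not just the eigenvalues but the prescribed normalizing numbers. This is the genuine content of the inverse spectral theorem, and I would make it rigorous through the spectral-measure (Weyl $m$-function) identity $[(J-zI)^{-1}]_{11}=\int (t-z)^{-1}\,d\mu(t)$, equivalently $(J^k)_{11}=\int t^k\,d\mu$ for all $k$, which holds because $e_1$ corresponds to the constant polynomial $p_0=1$ under the orthonormalization. Everything else (simplicity, cyclicity, and the parity argument) is routine once this correspondence is in place.
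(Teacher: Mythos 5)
This theorem is not proved in the paper at all: it is quoted from the cited reference \cite{AG15} (Aydin--Guseinov), with the paper explicitly deferring even the definition of normalizing numbers to that source. So there is no internal proof to compare yours against; what can be said is that your argument is the classical inverse spectral theory of finite Jacobi matrices, which is also the framework of the cited literature, and it is essentially correct. The necessity direction is sound: simplicity of eigenvalues and cyclicity of $e_1$ follow from the positive off-diagonal entries, giving $\beta_i=(v_i)_1^2>0$; orthonormality of the eigenvector matrix gives $\sum_i\beta_i=1$; and the signature conjugation $DJD=-J$, $D=\diag(1,-1,1,\dots)$, gives both symmetries at once --- note that this identity already yields $\spec(J)=-\spec(J)$, so your appeal to Theorem \ref{t:bip-spec-sym} is redundant (though harmless). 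The sufficiency direction via $\mu=\sum_i\beta_i\delta_{\lambda_i}$, Gram--Schmidt on $1,x,\dots,x^{n-1}$, and the parity argument forcing the diagonal recurrence coefficients $a_k=\int x\,p_k^2\,d\mu$ to vanish is the standard construction and does produce a hollow Jacobi matrix with the right spectrum and weights.

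Two small points would tighten it. First, the three-term recurrence as you wrote it fails at $k=n-1$ (where $xp_{n-1}$ exceeds degree $n-1$); the clean formulation is that $J$ is the matrix of the multiplication-by-$x$ operator on the $n$-dimensional space $L^2(\mu)$ in the orthonormal polynomial basis, which is symmetric, tridiagonal with positive off-diagonal entries, and has spectrum equal to the support of $\mu$. Second, since the theorem's ``normalizing numbers'' are those of \cite{AG15}, you should record the identification you are implicitly using: with $P_0,\dots,P_{n-1}$ the first-kind polynomials normalized by $P_0=1$, the eigenvector for $\lambda_i$ is $(P_0(\lambda_i),\dots,P_{n-1}(\lambda_i))^T$ up to scaling, so the normalizing number $\bigl(\sum_k P_k(\lambda_i)^2\bigr)^{-1}$ equals $(v_i)_1^2$, i.e., exactly the spectral weight of $\mu$ at $\lambda_i$. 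With that definitional check made explicit, your proof is complete and matches the approach one finds in the source the paper cites.
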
 

\begin{cor}\label{c:path-HIEPG} A multiset $S$ of $n$ real numbers is the spectrum of some matrix $B\in\symo(P_n)$ if and only if  the entries of $S$ are distinct and $S=-S$.  The one and only ordered multiplicity list of the path is $(1,1,\dots,1)$, and it is bipartite spectrally arbitrary.
\end{cor}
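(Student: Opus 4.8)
The plan is to derive Corollary \ref{c:path-HIEPG} as a direct consequence of the preceding theorem of \cite{AG15}, since that theorem characterizes exactly which multisets arise as spectra of symmetric tridiagonal matrices with positive off-diagonal entries, and a hollow matrix in $\symo(P_n)$ is precisely such a tridiagonal matrix with the additional constraint that the diagonal is zero. First I would observe that $A\in\symo(P_n)$ means $A$ is symmetric tridiagonal, has nonzero (hence, after a diagonal sign change, positive) sub- and superdiagonal entries, and has zero diagonal. The key point is to identify what the zero-diagonal condition contributes on top of the hypotheses of the cited theorem. Since $P_n$ is bipartite, Theorem \ref{t:bip-spec-sym} already forces $\spec(A)=-\spec(A)$ for any $A\in\symo(P_n)$; and because the eigenvalues of a symmetric tridiagonal matrix with nonzero off-diagonal entries are always distinct (its eigenvalues are simple), every $B\in\symo(P_n)$ has $n$ distinct eigenvalues forming a set symmetric about the origin. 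This gives the necessity direction of the first sentence.

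For sufficiency, I would start with an arbitrary multiset $S$ of $n$ distinct reals satisfying $S=-S$ and show it is realized by some $B\in\symo(P_n)$. The natural route is to invoke the cited theorem: condition $(i)$ of that theorem is exactly the statement that the $\lambda_i$ are real, distinct, and symmetric about the origin (with a forced zero when $n$ is odd), so $S$ satisfies $(i)$. I then need to produce normalizing numbers $\beta_i$ satisfying condition $(ii)$ so that the resulting tridiagonal matrix is hollow. Here the one genuine subtlety is that the cited theorem produces a matrix with prescribed eigenvalues but does not a priori guarantee a zero diagonal; the hollowness must be forced through the choice of $\beta_i$. The trace of the constructed matrix equals $\sum_i\lambda_i\beta_i$, which is zero whenever the $\beta_i$ are chosen symmetrically (i.e. $\beta_k=\beta_{n+1-k}$) because the $\lambda_i$ are antisymmetric; condition $(ii)$ already demands this symmetry, so the trace is automatically zero. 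However, a zero trace only makes the diagonal \emph{sum} to zero, not each diagonal entry vanish individually, so I would need to verify that the construction in \cite{AG15} actually yields all diagonal entries zero, or exhibit a symmetric choice of $\beta_i$ that does so.

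The main obstacle is precisely this gap between ``zero trace'' and ``zero diagonal.'' I expect the resolution to come from the internal structure of the \cite{AG15} construction: the symmetry conditions in $(i)$ and $(ii)$ are designed so that the persymmetric (centrosymmetric about the anti-diagonal) structure of the resulting tridiagonal matrix, combined with the spectral antisymmetry, forces each diagonal entry to vanish, not merely their sum. Concretely, a symmetric tridiagonal matrix whose spectrum is symmetric about the origin and which is persymmetric has a diagonal that is itself antisymmetric under reversal, and for a bipartite (tridiagonal) pattern the spectral symmetry pins the diagonal to zero. I would make this precise by combining Theorem \ref{t:bip-spec-sym} (spectral symmetry of hollow bipartite matrices) with the explicit reconstruction, or alternatively by noting that \cite{AG15} is explicitly cited as solving the ``stronger version of the HIEP-$G$,'' meaning the authors there already guarantee the hollow realization, so the $\beta_i$ may be chosen freely subject to $(ii)$ and the diagonal will be zero.

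Finally, the last two sentences of the corollary follow immediately. Since every $B\in\symo(P_n)$ has $n$ distinct eigenvalues, the only ordered multiplicity list is $(1,1,\dots,1)$; and since by the first part every admissible spectrum (any distinct $S=-S$) is realized, this list is bipartite spectrally arbitrary by the definition given earlier in Section \ref{s:bipart}.
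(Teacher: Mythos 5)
Your proposal is correct and follows essentially the same route as the paper, which treats the corollary as an immediate consequence of the cited theorem of \cite{AG15} (with necessity also following, as you note, from bipartiteness and the simplicity of eigenvalues of tridiagonal matrices with nonzero off-diagonal entries, i.e., $\M(P_n)=1$). The ``gap'' between zero trace and zero diagonal that you worry about is not actually there: the theorem of \cite{AG15} is an inverse spectral theorem for Jacobi matrices \emph{with zero diagonal} (the paper's transcription of the theorem omits that phrase, but the surrounding text and the title of \cite{AG15} make it explicit), so your fallback resolution is the intended reading and the speculative persymmetry argument is unnecessary.
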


\subsection*{Complete bipartite graphs}
We apply results in Section \ref{s:bipart} to solve the hollow IEPG for complete bipartite graphs. 
\begin{thm}\label{t:IEPG0-Kmn} Suppose $1\le m\le  n$.  A multiset $S$ of $m+n$ real numbers is the spectrum of some matrix $B\in\symo(K_{m,n})$ if and only if   $S=-S$ and the number of nonzero entries $k$ in $S$ satisfies $2\le k\le 2m$.  
\end{thm}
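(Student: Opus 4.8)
The plan is to exploit the block form of matrices in $\symo(K_{m,n})$ and convert the spectral condition into a statement about singular values. Labeling the partite sets so that they have orders $m$ and $n$, every $B\in\symo(K_{m,n})$ has the form $B=\mtx{0 & C\\ C^T & 0}$, where $C$ is an $m\times n$ matrix with every entry nonzero; conversely, any such $C$ produces a matrix in $\symo(K_{m,n})$. If $u,v$ are left and right singular vectors of $C$ for a singular value $\sigma$, then $\mtx{u\\ v}$ and $\mtx{u\\ -v}$ are eigenvectors of $B$ for $\sigma$ and $-\sigma$; it follows that the nonzero eigenvalues of $B$ are exactly $\pm\sigma$ as $\sigma$ runs over the nonzero singular values of $C$ (with multiplicity), the remaining $n-m$ eigenvalues being $0$. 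Thus $\spec(B)$ is completely controlled by the singular values of $C$, and the number $k$ of nonzero eigenvalues equals $\rank B=2\rank C$.

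For necessity, let $B\in\symo(K_{m,n})$ and $S=\spec(B)$. Since $K_{m,n}$ is bipartite, $S=-S$ by Theorem \ref{t:bip-spec-sym}; in particular the nonzero entries occur in pairs $\pm\sigma$, so $k$ is even. As $C$ has a nonzero entry, $C\neq 0$, so $B$ has at least one nonzero singular value and $k\ge 2$. Since $C$ is $m\times n$ with $m\le n$, it has at most $m$ nonzero singular values, whence $k=2\rank C\le 2m$.

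For sufficiency, assume $S=-S$ and $2\le k\le 2m$; as observed, $k$ is even, say $k=2\ell$ with $1\le\ell\le m$. List the positive entries of $S$, with multiplicity, as $\sigma_1,\dots,\sigma_\ell>0$. It suffices to build an $m\times n$ matrix $C$ all of whose entries are nonzero and whose singular values are $\sigma_1,\dots,\sigma_\ell,0,\dots,0$: then $B=\mtx{0 & C\\ C^T & 0}\in\symo(K_{m,n})$ has nonzero eigenvalues $\pm\sigma_1,\dots,\pm\sigma_\ell$ together with $(m+n)-2\ell$ zeros, so $\spec(B)=S$. To this end, let $\Sigma$ be the $m\times n$ matrix with $\Sigma_{pp}=\sigma_p$ for $1\le p\le\ell$ and all other entries $0$, and put $C=U\Sigma V^T$ with $U\in SO(m)$ and $V\in SO(n)$; such a $C$ automatically has the prescribed singular values, so only the nonvanishing of its entries remains in question.

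The crux, and the only real obstacle, is to choose $U$ and $V$ so that every entry of $C$ is nonzero. Here $C_{ij}=\sum_{p=1}^{\ell}\sigma_p U_{ip}V_{jp}$, which for each fixed pair $(i,j)$ is a polynomial function on the compact connected manifold $SO(m)\times SO(n)$ and is not identically zero: choosing $V$ whose $j$-th row is $e_1^T$ and $U$ with $U_{i1}\neq 0$ leaves only the $p=1$ summand $\sigma_1U_{i1}\neq 0$. Hence the locus where $C_{ij}=0$ is a proper (in particular, measure-zero) subvariety, and the finite union of these loci over all $(i,j)$ is still proper. Any $(U,V)$ in the complement gives a $C$ with all entries nonzero, completing the construction. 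If an explicit matrix is preferred, one can instead exhibit $C$ directly in the extreme cases (a rank-one $C=\sigma_1 uv^T$ with $u,v$ having all entries nonzero when $\ell=1$, and a generically conjugated diagonal matrix when $\ell=m=n$), but the genericity argument handles all cases uniformly.
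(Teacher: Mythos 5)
Your proposal is correct and takes essentially the same route as the paper: the same block decomposition $B=\mtx{0 & C\\ C^T & 0}$, the same necessity argument (bipartite spectral symmetry plus the rank bounds $2\le\rank B\le 2m$), and the same sufficiency construction $C=U\Sigma V^T$ with the prescribed positive values on the diagonal of $\Sigma$ and orthogonal factors chosen generically so that every entry of $C$ is nonzero. The only differences are in bookkeeping: you extract $\spec(B)$ directly from the singular-vector correspondence where the paper computes $\spec(B^2)=\spec(AA^T)\cup\spec(A^TA)$ and then invokes Theorem \ref{t:bip-spec-sym}, and your measure-zero subvariety argument on $SO(m)\times SO(n)$ is a more rigorous version of the paper's informal ``random vectors plus Gram--Schmidt works with high probability'' justification.
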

\bpf If $B\in\symo(K_{m,n})$, then $\spec(B)=-\spec(B)$ by Theorem \ref{t:bip-spec-sym}.  With the vertices appropriately ordered, $B=\mtx{O & A\\  A^T & O}$ where $A$ is an $m\x n$ matrix with every entry nonzero, so $2\le\rank B\le 2m$.

Now assume $S$ is a multiset  of $m+n$ real numbers satisfying $S=-S$ and the number of nonzero entries $k$ in $S$ satisfies $2\le k\le 2m$. Observe that $S=-S$ implies $k$ is even; let $\ell=\frac k 2$. Then  we can denote the entries of $S$ by $-\lambda_\ell\le \dots\le -\lambda_1< 0^{(n+m-2\ell)}< \lambda_1\le \dots\le \lambda_\ell$, with no assumption that the $\lambda_i$ are distinct.  Let $D=\diag({\lambda_1},\dots,{\lambda_\ell)}$ and  $\widehat D=\mtx{D & O_{\ell,n-\ell}\\O_{m-\ell,\ell} &O_{m-\ell,n-\ell}}\in \R^{m\x n}$. Choose an $n\x n$ real orthogonal   matrix $U$ and an $m\x m$ real orthogonal   matrix $W$ such that all entries of $A=W\widehat DU$ are nonzero (starting with random vectors and applying the Gram-Schmidt process to create orthonormal bases for $\R^n$ and $\R^m$  will accomplish this with high probability). 
  Then \[ \spec(AA^T)=\spec(W\widehat DUU^T\widehat D^TW^T)=\spec(\widehat D\widehat D^T)=\spec(D^2)\cup\{0^{(m-\ell)}\}=\{\lambda_1^2,\dots,\lambda_\ell^2\}\cup\{0^{(m-\ell)}\}\]
and \[\spec(A^TA)=\spec(AA^T)\cup\{0^{(n-m)}\}=\{\lambda_1^2,\dots,\lambda_\ell^2\}\cup\{0^{(n-\ell)}\}.\]
Define $B=\mtx{O_{m,m} & A\\  A^T & O_{n,n}}\in\symo(K_{m,n})$.  Then $B^2=\mtx{AA^T & O_{m,n}\\  O_{n,m}& A^TA}$, so $\spec(B^2)=\{(\lambda_1^2)^{(2)},\dots,(\lambda_\ell^2)^{(2)},$ $0^{(n+m-k)}\}$.  Since $\spec(B)=-\spec(B)$, $\spec(B)=S$.
\epf

\begin{cor}\label{q0Kmn}  For $n\ge 1$, $q_0(K_{n,n})=2$, and $q_0(K_{m,n})=3$ for $1\le m<n$.  \end{cor}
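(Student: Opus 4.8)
The plan is to derive both statements directly from the characterization in Theorem \ref{t:IEPG0-Kmn}, which tells us exactly which symmetric multisets occur as spectra of matrices in $\symo(K_{m,n})$: a multiset $S$ of $m+n$ reals is realizable if and only if $S = -S$ and its number of nonzero entries $k$ satisfies $2 \le k \le 2m$. Throughout I would use the fact that $q_0(G) \ge 2$ whenever $G$ has an edge, which holds for every $K_{m,n}$ with $m,n \ge 1$.

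For $K_{n,n}$ (the case $m=n$) I would establish the upper bound $q_0(K_{n,n}) \le 2$ by exhibiting a realizable spectrum with only two distinct values. Take $S = \{(-\lambda)^{(n)}, \lambda^{(n)}\}$ for any $\lambda > 0$; this satisfies $S = -S$ and has $k = 2n = 2m$ nonzero entries, so $2 \le k \le 2m$ and the spectrum is realizable by Theorem \ref{t:IEPG0-Kmn}. Combined with $q_0 \ge 2$, this gives $q_0(K_{n,n}) = 2$.

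For $K_{m,n}$ with $m < n$, the upper bound $q_0 \le 3$ follows the same way: take $S = \{(-\lambda)^{(m)}, 0^{(n-m)}, \lambda^{(m)}\}$, which has three distinct values since $n-m \ge 1$, satisfies $S = -S$, and has $k = 2m$ nonzero entries with $2 \le 2m \le 2m$; hence it is realizable. The matching lower bound $q_0 \ge 3$ is where the strict inequality $m < n$ does its work, and it is the only step requiring a little care. Suppose for contradiction that some $B \in \symo(K_{m,n})$ has $q(B)=2$. Its spectrum is symmetric about the origin, and I would argue that two distinct eigenvalues together with the symmetry $S = -S$ force the two values to be $\pm\lambda$ with $\lambda \ne 0$ and with equal multiplicities $\frac{m+n}{2}$; a single zero eigenvalue is impossible, since the presence of any nonzero eigenvalue $\mu$ forces $-\mu$ as well, yielding three distinct values. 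But then the number of nonzero entries is $k = m+n$, and Theorem \ref{t:IEPG0-Kmn} requires $k \le 2m$, i.e.\ $m + n \le 2m$, forcing $n \le m$ and contradicting $m < n$. Hence $q_0(K_{m,n}) = 3$.

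I expect no serious obstacle: the entire argument is an application of the realizability criterion. The only subtlety is the bookkeeping in the lower bound for $m < n$, namely ruling out a zero eigenvalue and correctly counting nonzero entries so that the constraint $k \le 2m$ can be invoked to produce the contradiction.
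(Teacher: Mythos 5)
Your proof is correct and matches the paper's intended derivation: the corollary is stated in the paper as an immediate consequence of Theorem \ref{t:IEPG0-Kmn}, and your argument (realizable two-valued spectrum $\{(-\lambda)^{(n)},\lambda^{(n)}\}$ for $K_{n,n}$; realizable three-valued spectrum $\{(-\lambda)^{(m)},0^{(n-m)},\lambda^{(m)}\}$ for $m<n$; and the lower bound from $S=-S$ together with the constraint $k\le 2m<m+n$ forcing a zero eigenvalue alongside a $\pm\mu$ pair) is exactly the bookkeeping that makes it immediate. No gaps.
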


  The values $q(K_{n,n})=2$, and $q(K_{m,n})=3$ for $1\le m<n$ are established in \cite{AACFMN}, so $q_0(K_{m,n})=q(K_{m,n})$ for $1\le m\le n$.
Note that the IEPG for complete bipartite graphs has not yet been solved, and what is known is very different.  It is immediate from Theorem \ref{t:IEPG0-Kmn} that every hollow ordered multiplicity list that can be realized by $K_{m,n}$ is bipartite spectrally arbitrary.  However, for a matrix $A\in \sym(K_{m,n})$ with  ordered multiplicity list $(1,n+m-2,1)$ and $n\ge m\ge 3$, $\spec(A)=\{-\lambda+\mu, \mu^{(n+m-2)},\lambda+\mu\}$ for some $\lambda\in\R$  \cite{REU17}.  That is, the spectrum of $A$ is a translate of a  spectrum symmetric about the origin, so this  ordered multiplicity list  is not spectrally arbitrary for $K_{m,n}$.

\subsection*{Complete graphs and complete split graphs}

\begin{rem}\label{Kn-q0-mr0}
It was shown in Example \ref{q0-Kn} that $q_0(K_n)=2$ for $n\ge 2$, $\maxmulto(K_n)=n-1$, and $(n-1,1),(1,n-1)\in\omlo(K_n)$. 
\end{rem}

In contrast to the standard case, it is not the case that  every ordered multiplicity list with at least two entries can be realized by a matrix described by the complete graph. 

\begin{prop}\label{no22forK4} The ordered multiplicity list $(2,2)$  cannot be realized by any matrix in $\symo(K_4)$.  
\end{prop}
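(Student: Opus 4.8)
The plan is to combine the trace constraint \eqref{trace-eq} with Proposition \ref{oml-rr}. If $(2,2)\in\omlo(K_4)$, then there is $A\in\symo(K_4)$ with two distinct eigenvalues, each of multiplicity $2$; since $A$ is hollow, $\tr A=0$ forces these eigenvalues to be $\pm\lam$ for some $\lam>0$, so $\spec(A)=\{(-\lam)^{(2)},\lam^{(2)}\}$ (equivalently, by Proposition \ref{oml-rr}, $K_4$ would have to be hollow potentially orthogonal). Because $A$ is symmetric it is orthogonally diagonalizable, and squaring the diagonalization gives $A^2=\lam^2 I_4$. Thus the first step is to reduce the statement to showing that no $A\in\symo(K_4)$ satisfies $A^2=\lam^2 I_4$.

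Next I would write this constraint entrywise. Labelling the six free entries of $A$ by
\[A=\mtx{0 & a & b & c\\ a & 0 & d & e\\ b & d & 0 & f\\ c & e & f & 0},\]
with $a,b,c,d,e,f$ all nonzero (as $A\in\symo(K_4)$), the requirement that every off-diagonal entry of $A^2$ vanish produces six homogeneous quadratic relations among the entries. I would only need the three arising from the first row of $A^2$:
\[bd+ce=0,\qquad ad+cf=0,\qquad ae+bf=0.\]
The diagonal relations $(A^2)_{ii}=\lam^2$ play no role and can be ignored.

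The key step is to extract a contradiction from these three equations. From the first two, $bd=-ce$ and $ad=-cf$; dividing (legitimate since every entry is nonzero) gives $b/a=e/f$, i.e.\ $bf=ae$. But the third relation says $ae=-bf$. Combining the two yields $ae=-ae$, hence $ae=0$, contradicting $a,e\neq 0$. Therefore no such $A$ exists and $(2,2)\notin\omlo(K_4)$.

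The computation is short, so there is no real analytic obstacle; the only point requiring care is the choice of which three of the six entry-equations to combine, since the aim is to force a single product of two (necessarily nonzero) entries to vanish, and the three first-row relations accomplish this cleanly. I would also note that this same argument shows $K_4$ is not hollow potentially orthogonal, which recovers the Proposition \ref{oml-rr} perspective and contrasts with the non-hollow setting, where such a multiplicity list is realizable.
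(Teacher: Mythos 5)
Your proof is correct and takes essentially the same route as the paper: both reduce via Proposition \ref{oml-rr} (equivalently, the trace constraint) to showing that no orthogonal-type matrix exists in $\symo(K_4)$, i.e.\ that $A^2$ cannot be a positive multiple of $I_4$, and both derive the contradiction from exactly the three first-row off-diagonal equations of the squared matrix. The algebraic manipulation differs only cosmetically (you divide two relations and compare with the third, while the paper solves for $u_{2,4}$ and $u_{3,4}$ and substitutes into $(U^2)_{1,4}$), so there is nothing substantive to distinguish the two arguments.
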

 \bpf  Suppose $(2,2)\in\omlo(K_4)$. 
 Then by Proposition \ref{oml-rr}, there is an orthogonal matrix $U=[u_{ij}]\in\symo(K_4)$, so  
 every off-diagonal entry of $U^2$ must be zero.  Solving $0=(U^2)_{1,2}=u_{1,3} u_{2,3} + u_{1,4} u_{2,4}$ gives $u_{2,4}= -\frac{u_{1,3} u_{2,3}}{u_{1,4}}$.
 Solving $0=(U^2)_{1,3}=u_{1,2} u_{2,3} + u_{1,4} u_{3,4}$ gives $u_{3,4}= -\frac{u_{1,2}u_{2,3}}{u_{1,4}}$.  Then $(U^2)_{1,4}=-\frac{2 u_{1,2} u_{1,3} u_{2,3}}{u_{1,4}}\ne 0$.
\epf



A \emph{complete split graph} is a graph of the form $\OL{K_{r}}\vee K_{s}$.  

\begin{thm}\label{JM12thm}{\rm \cite{JM12}} Let $n\ge\ell\ge 2$ and let $\lam_1\le \dots\le\lam_{n-\ell+1}<\lam_{n-\ell+2}=\dots=\lam_{n-1}=0<\lam_n$ be real numbers such that $\sum_{i=1}^n\lam_i=0$. Then there is a nonnegative matrix $A\in\symo(\OL{K_{\ell-1}}\vee K_{n-\ell+1})$ such that $\spec(A)=\{\lam_1,\dots,\lam_{n-\ell+1},0^{(\ell-2)},\lam_n\}$.
 If $\lam_1\le \dots\le\lam_{n-1}<0<\lam_n$ are real numbers such that $\sum_{i=1}^n\lam_i=0$, then there is a nonnegative matrix $A\in\symo(K_n)$ such that $\spec(A)=\{\lam_1,\dots,\lam_n\}$.
\end{thm}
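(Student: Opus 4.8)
The plan is to deduce the complete split graph statement from the complete graph statement, and to obtain the complete graph statement from the classical symmetric realizability of ``Suleimanova'' spectra together with an elementary trace observation.

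First I would record the reduction. Write $m=n-\ell+1$ for the size of the clique, so $\OL{K_{\ell-1}}\vee K_{m}$ has an independent set $I$ of size $\ell-1$ and a clique of size $m$. Assuming the complete graph case is already known, there is a nonnegative $\tilde A=\mtx{0 & w^\top\\ w & B}\in\symo(K_{m+1})$ with $w\in\R^{m}$ strictly positive, $B$ the hollow, positive-off-diagonal clique block, and $\spec(\tilde A)=\{\lambda_1,\dots,\lambda_{m},\lambda_n\}$ (the target list with the $\ell-2$ prescribed zeros deleted; its sum is still $0$). I would then set $A=\mtx{O_{\ell-1} & \tfrac{1}{\sqrt{\ell-1}}\mathbf{1}w^\top\\ \tfrac{1}{\sqrt{\ell-1}}w\mathbf{1}^\top & B}$ with $\mathbf{1}=\mathbf{1}_{\ell-1}$. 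This is nonnegative and lies in $\symo(\OL{K_{\ell-1}}\vee K_{m})$, since the coupling block $\tfrac{1}{\sqrt{\ell-1}}\mathbf{1}w^\top$ is entrywise positive and the $I$-block is zero. That coupling block has rank one, so every $(x,0)$ with $x\perp\mathbf{1}$ is in the kernel of $A$, producing exactly $\ell-2$ eigenvalues $0$. On the orthogonal complement of this kernel, namely the span of $(\mathbf{1}/\sqrt{\ell-1},0)$ together with the clique coordinates, $A$ acts as the $(m+1)\times(m+1)$ matrix $\tilde A$. Hence $\spec(A)=\{\lambda_1,\dots,\lambda_{m},0^{(\ell-2)},\lambda_n\}$, and the theorem reduces to its complete graph statement.

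Next I would prove the complete graph statement. The given list is a trace-zero Suleimanova spectrum: a single positive value $\lambda_n$, remaining values $\lambda_1\le\dots\le\lambda_{n-1}$ all negative, and $\sum_i\lambda_i=0$. By the classical symmetric nonnegative realizability of such spectra I would produce an entrywise nonnegative symmetric $A$ with $\spec(A)=\{\lambda_1,\dots,\lambda_n\}$. The elegant point is that hollowness is then automatic: since $A\ge 0$, every diagonal entry is nonnegative, while $\tr A=\sum_i\lambda_i=0$, so all diagonal entries vanish. A convenient way to organize the construction so that it also controls the pattern is to seek $A=\lambda_n\,ww^\top-L$, where $w>0$ is the intended Perron eigenvector and $L$ is a generalized Laplacian (that is, $Lw=0$ and $L_{jk}\le 0$ for $j\ne k$) with eigenvalues $\{0,-\lambda_1,\dots,-\lambda_{n-1}\}$ and diagonal forced by $L_{kk}=\lambda_n w_k^2$; then $A_{jk}=\lambda_n w_jw_k-L_{jk}>0$ for all $j\ne k$ and $A_{kk}=0$.

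The step I expect to be the main obstacle is guaranteeing that every off-diagonal entry is strictly positive, i.e.\ that the realized pattern is the full $K_n$ (equivalently $K_{m+1}$ in the reduction) rather than some proper connected subgraph. Entrywise nonnegativity plus irreducibility (which Perron--Frobenius yields from a simple positive Perron eigenvalue) only force connectivity, not completeness, so a naive realization may contain zero off-diagonal entries. In the $A=\lambda_n ww^\top-L$ formulation the positivity is instead free once $w>0$, because $\lambda_n w_jw_k>0$ already; the difficulty is transferred to the coupled generalized-Laplacian inverse problem (prescribed positive kernel direction $w$, prescribed diagonal $\lambda_n w_k^2$, and prescribed spectrum), where repeated negative eigenvalues must also be accommodated. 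The hard part will be to solve this for distinct eigenvalues first, with $w$ non-uniform and the off-diagonal weights chosen to meet strictly interlacing targets, and then to reach arbitrary multiplicities by a limiting argument while keeping the off-diagonal entries uniformly bounded away from zero so that strict positivity survives in the limit.
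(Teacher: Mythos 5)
This theorem is quoted in the paper from \cite{JM12} and no proof of it appears in the paper at all, so there is no internal argument to compare you against; your attempt has to stand on its own as a self-contained proof. Two of your three steps do stand: the reduction of the complete split graph statement to the complete graph statement via the coupling block $\tfrac{1}{\sqrt{\ell-1}}\mathbf{1}w^\top$ is correct (the vectors $(x,0)$ with $x\perp\mathbf{1}$ give exactly the $\ell-2$ zero eigenvalues, and on the orthogonal complement $A$ acts as $\tilde A$, whose off-diagonal entries are positive because $\tilde A\in\symo(K_{m+1})$ is nonnegative with pattern $K_{m+1}$), and the observation that nonnegativity of the diagonal together with $\tr A=\sum_i\lam_i=0$ forces hollowness is a genuinely nice simplification.

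The gap is that the complete graph case --- which after your reduction is the entire content of the theorem --- is never proved. Citing ``classical symmetric nonnegative realizability of Suleimanova spectra'' does not suffice, because that black box gives no control on the zero--nonzero pattern, and the theorem requires \emph{every} off-diagonal entry to be nonzero (pattern exactly $K_n$); you correctly identify this as the obstacle, but your fix only reformulates the problem: you now need a symmetric $L$ with $L_{jk}\le 0$ for $j\ne k$, $Lw=0$ for some positive unit $w$, diagonal $L_{kk}=\lam_n w_k^2$, and spectrum $\{0,-\lam_1,\dots,-\lam_{n-1}\}$. This constrained inverse eigenvalue problem (prescribed spectrum, prescribed kernel direction, and prescribed diagonal simultaneously) is at least as hard as the original statement, and you give no construction for it even in the distinct-eigenvalue case --- ``off-diagonal weights chosen to meet strictly interlacing targets'' is a hope, not an argument. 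The proposed limiting argument for repeated eigenvalues has a further unaddressed degeneration: if $w$ varies along the approximating sequence, the unit vectors $w$ can converge to the boundary of the positive orthant (some $w_k\to 0$), destroying the lower bound $A_{jk}\ge\lam_n w_jw_k>0$; you would need to fix $w$ throughout or prove a uniform interior bound. As written, the proposal establishes the first assertion of the theorem conditional on the second, and the second remains unproven.
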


Observe that $\OL{K_1}\vee K_{n-1}=K_n$.  As a result of Theorem \ref{JM12thm}, any spectrum with $n-1$ negative eigenvalues can be realized by some matrix in $\symo(K_n)$.  The next result converts a spectrum guaranteed by Theorem \ref{JM12thm} to an ordered multiplicity list (using $r=\ell-1)$.

\begin{cor}\label{JM12cor} For any $s$ positive integers $m_1,\dots,m_s$ such that $m_1+\dots+m_s=n-1$, \[(m_1,\dots,m_s,1),(1,m_s,\dots,m_1)\in\omlo(K_n).\] 
For any $s$ positive integers $m_1,\dots,m_s$ such that $m_1+\dots+m_s=n-r$, \[(m_1,\dots,m_s,r-1,1),(1,r-1,m_s,\dots,m_1))\in\omlo(\OL{K_{r}}\vee K_{n-r}).\]
\end{cor}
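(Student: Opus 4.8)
The plan is to realize each forward list directly from Theorem \ref{JM12thm} and to obtain each reversed list by negating the constructed matrix. The key observation is that for any $A\in\symo(G)$, the matrix $-A$ also lies in $\symo(G)$, since negation preserves both the zero diagonal and the off-diagonal zero/nonzero pattern, and $\spec(-A)=-\spec(A)$. Consequently $\oml(-A)$ is the reversal of $\oml(A)$. So it suffices to produce the two forward lists, $(m_1,\dots,m_s,1)$ for $K_n$ and $(m_1,\dots,m_s,r-1,1)$ for $\OL{K_r}\vee K_{n-r}$, and then negate.

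For the complete split graph (taking $r\ge 2$, so $\ell=r+1\ge 3$ in the notation of Theorem \ref{JM12thm}), I would choose $s$ distinct negative reals $\nu_1<\nu_2<\dots<\nu_s<0$ and form the multiset of negative eigenvalues by repeating each $\nu_i$ with multiplicity $m_i$; since $\sum m_i=n-r$, this contributes exactly $n-r$ negative entries. I then append $r-1$ copies of $0$ and set the single largest eigenvalue to $\lam_n=-\sum_{i=1}^s m_i\nu_i$, which is strictly positive because every $\nu_i<0$. This choice forces the trace to vanish, so the resulting multiset meets the hypotheses of Theorem \ref{JM12thm} with $\ell-1=r$, and the theorem produces $A\in\symo(\OL{K_r}\vee K_{n-r})$ with exactly this spectrum. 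Listing the distinct eigenvalues in increasing order then yields $\oml(A)=(m_1,\dots,m_s,r-1,1)$.

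The $K_n$ case is the same construction with no middle zeros: choose $\nu_1<\dots<\nu_s<0$ with multiplicities $m_1,\dots,m_s$ summing to $n-1$, set $\lam_n=-\sum m_i\nu_i>0$, and invoke the second statement of Theorem \ref{JM12thm}. Negating each constructed matrix then delivers $(1,m_s,\dots,m_1)$ and $(1,r-1,m_s,\dots,m_1)$. The only point requiring attention—routine once the eigenvalues are fixed—is confirming that the multiplicities are exactly as claimed: the $\nu_i$ are distinct and strictly negative, $\lam_n$ is strictly positive and hence distinct from all of them and from $0$, and the eigenvalue $0$ has multiplicity precisely $r-1$ because no $\nu_i$ or $\lam_n$ equals zero. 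I expect no genuine obstacle here, as Theorem \ref{JM12thm} performs all of the spectral realization; the corollary is essentially a translation of its guaranteed spectra into the language of ordered multiplicity lists, together with the negation symmetry.
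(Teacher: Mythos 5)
Your proposal is correct and matches the paper's intended argument: the paper treats this corollary as an immediate conversion of the spectra guaranteed by Theorem \ref{JM12thm} (with $\ell=r+1$) into ordered multiplicity lists, exactly as you do by prescribing distinct negative eigenvalues $\nu_i$ with multiplicities $m_i$, the forced positive eigenvalue from the trace condition, and $r-1$ zeros. Your use of negation to obtain the reversed lists is also the paper's standard device (used, e.g., for $K_n$ and $W_5$), so there is nothing genuinely different here.
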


\begin{cor}\label{JM12cor2} For  $2\le r<n$,  $q_0(\OL{K_r}\vee K_{n-r})\le 3$.
\end{cor}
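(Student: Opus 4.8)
The plan is to derive $q_0(\OL{K_r}\vee K_{n-r})\le 3$ as an immediate corollary of the ordered multiplicity lists guaranteed by Corollary \ref{JM12cor}. The key observation is that a complete split graph $\OL{K_r}\vee K_{n-r}$ (with $2\le r<n$) is not the empty graph, and its maximum hollow rank is less than $n$; indeed, since the $r$ vertices of the independent set $\OL{K_r}$ are pairwise nonadjacent, any generalized cycle can use at most two of them consecutively, forcing $\MRo<n$ for $r\ge 2$. Hence by Proposition \ref{zero-eval} we already have the lower bound $q_0(\OL{K_r}\vee K_{n-r})\ge 3$, so it suffices to exhibit a matrix achieving exactly three distinct eigenvalues.

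To obtain the matching upper bound, I would apply Corollary \ref{JM12cor} with the single choice $s=1$ and $m_1=n-r$. That corollary then yields the ordered multiplicity list $(n-r,r-1,1)\in\omlo(\OL{K_r}\vee K_{n-r})$. Since $2\le r<n$, all three entries $n-r$, $r-1$, and $1$ are positive integers, so this is a genuine length-three list. The existence of a matrix $A\in\symo(\OL{K_r}\vee K_{n-r})$ realizing this list means $A$ has exactly three distinct eigenvalues, i.e. $q(A)=3$, and therefore $q_0(\OL{K_r}\vee K_{n-r})\le 3$.

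Combining the two bounds gives the stated inequality $q_0(\OL{K_r}\vee K_{n-r})\le 3$ (and in fact equality whenever $r\ge 2$ and the graph is nonempty). The main point requiring care is simply checking that the hypotheses of Corollary \ref{JM12cor} are met in this regime: one needs $m_1=n-r\ge 1$ (which holds since $r<n$) and one needs the middle multiplicity $r-1\ge 1$ (which holds since $r\ge 2$), so that the list has exactly three blocks rather than degenerating. No genuine obstacle arises here—the substantive content was already done in Theorem \ref{JM12thm} and packaged in Corollary \ref{JM12cor}—so the proof is a one-line specialization. The only subtlety worth a sentence is confirming that $r\ge 2$ guarantees the zero-eigenvalue block $r-1$ is nonempty, which is exactly what prevents the list from collapsing to a two-element list.
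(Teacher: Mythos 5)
Your core argument is correct and is exactly the paper's (implicit) proof: specializing Corollary \ref{JM12cor} to $s=1$, $m_1=n-r$ gives $(n-r,r-1,1)\in\omlo(\OL{K_r}\vee K_{n-r})$, a genuine three-entry list since $r\ge 2$ and $r<n$, hence $q_0(\OL{K_r}\vee K_{n-r})\le 3$.

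However, your side claim establishing the lower bound is false and should be deleted. You assert that $r\ge 2$ forces $\MRo(\OL{K_r}\vee K_{n-r})<n$, but this fails already for $\OL{K_2}\vee K_2=K_4-e$: the $4$-cycle through all four vertices is a generalized cycle of order $4$, so $\MRo(K_4-e)=4=n$ (consistent with the paper, which obtains $(1,1,1,1)\in\omlo(K_4-e)$ via Proposition \ref{p:allsimp}). More generally, whenever $r\le n-r$ one can alternate independent-set vertices with clique vertices to build a spanning generalized cycle, so $\MRo=n$ and Proposition \ref{zero-eval} gives no information; the paper only invokes that proposition for the regime $\OL{K_{n-2}}\vee K_2$, $n\ge 5$ (Proposition \ref{split2}), where the maximum generalized cycle order really is $4<n$. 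Since the corollary asserts only the upper bound, none of this is needed; your proof stands once the parenthetical ``in fact equality whenever $r\ge 2$'' and the $\MRo<n$ argument are removed.
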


\begin{prop}\label{split2} For $n\ge 5$, $q_0(\OL{K_{n-2}}\vee K_2)=3$.
\end{prop}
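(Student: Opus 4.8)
The plan is to pin down $q_0(G)$ for $G:=\OL{K_{n-2}}\vee K_2$ by matching an upper and a lower bound at the value $3$. The upper bound is already available: taking $r=n-2$ in Corollary \ref{JM12cor2} (whose hypothesis $2\le r<n$ holds as soon as $n\ge 4$) gives $q_0(G)\le 3$. Everything therefore reduces to proving $q_0(G)\ge 3$ for $n\ge 5$, and for this I would apply Proposition \ref{zero-eval}. Since $G$ contains the edge $w_1w_2$, it is not $\OL{K_n}$, so it suffices to establish $\MRo(G)<n$.

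By Theorem \ref{maxrank}, $\MRo(G)$ is the maximum order of a generalized cycle of $G$, so the heart of the argument is a structural bound on generalized cycles. I would write the vertices of $G$ as an independent set $\{v_1,\dots,v_{n-2}\}$ together with the two clique vertices $w_1,w_2$, and record the key local fact that each $v_j$ has \emph{only} $w_1$ and $w_2$ as neighbors. In any generalized cycle $\CC$ every vertex lies in a unique component that is either a single edge or a cycle. The claim to extract is that each independent vertex lying in $\CC$ consumes degree at $\{w_1,w_2\}$: if $v_j$ sits in an edge component that edge must be $v_jw_1$ or $v_jw_2$, and if $v_j$ sits in a cycle then its two cycle-neighbors are forced to be $w_1$ and $w_2$. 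Because no two independent vertices are adjacent, any cycle through independent vertices alternates $v$'s and $w$'s and hence can contain at most the two available clique vertices; thus the only components that meet the independent set are the edges $v_jw_i$, the triangle $w_1w_2v_j$, and the $4$-cycle $w_1v_iw_2v_j$.

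Since $w_1$ and $w_2$ each lie in exactly one component, a short case check over these possibilities shows at most two independent vertices can be covered (for example by the $4$-cycle $w_1v_iw_2v_j$, or by two disjoint edges $v_iw_1$ and $v_jw_2$), and in every such configuration both clique vertices are exhausted, so no third $v_k$ can be added. Hence every generalized cycle of $G$ has order at most $4$, giving $\MRo(G)=4$ (the value $4$ being attained whenever $n-2\ge 2$). For $n\ge 5$ this yields $\MRo(G)=4<n$, so Proposition \ref{zero-eval} gives $q_0(G)\ge 3$; combined with the upper bound, $q_0(\OL{K_{n-2}}\vee K_2)=3$.

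I expect the main obstacle to be the generalized-cycle bookkeeping in the middle step, namely verifying rigorously that the degree constraints at $w_1$ and $w_2$ truly block a third independent vertex and that no mixture of a triangle or $4$-cycle with additional pendant edges evades the count. This is controlled by the observation that each covered independent vertex uses clique-vertex degree, of which only four units exist in total and which two independent vertices already consume; alternatively one can simply note that a spanning generalized cycle would have to cover all $n-2\ge 3$ independent vertices, which the argument above forbids.
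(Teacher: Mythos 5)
Your proposal is correct and takes essentially the same route as the paper: the paper's proof also establishes $\MRo(\OL{K_{n-2}}\vee K_2)=4$ via Theorem \ref{maxrank}, applies Proposition \ref{zero-eval} to get $q_0\ge 3$, and invokes Corollary \ref{JM12cor2} for the upper bound. The only difference is that the paper simply states ``observe that the maximum order of a generalized cycle is $4$,'' whereas you supply the (correct) bookkeeping argument that each independent vertex in a generalized cycle must consume degree at $w_1$ or $w_2$, so at most two of them can be covered.
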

\bpf Observe that the maximum order of a generalized cycle of $\OL{K_{n-2}}\vee K_2$ is 4, so $\MRo(\OL{K_{n-2}}\vee K_2)=4$ by Theorem \ref{maxrank}.  Since $4<n$, $q_0(\OL{K_{n-2}}\vee K_2)\ge 3$ by Proposition \ref{zero-eval}.  Corollary \ref{JM12cor2} completes the proof.
\epf

It was shown in \cite{mr0} that $\mro(K_n)=3$ for $n\ge 3$.  In particular, if  $n\ge 3$ and $T_n$ is the matrix whose $(i,j)$-entry is $(i-j)^2$, then $\rank T_n=3$.  Since $T_n$ is a primitive nonnegative matrix, $\rho(T_n)>|\lam|$ for every eigenvalue $\lam\ne \rho(T_n)$.  Thus $\spec(T_n)=\{\mu_1,\mu_2,0^{(n-3)},-\mu_1-\mu_2\}$.

\subsection*{Balanced Tripartite graphs}


\begin{prop} For $k\ge 1$, $(r,2r), (2r,r)\in\omlo(K_{r,r,r})$ and $q_0(K_{r,r,r})=2$.
\end{prop}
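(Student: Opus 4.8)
The plan is to exhibit a single explicit matrix in $\symo(K_{r,r,r})$ having exactly two distinct eigenvalues and ordered multiplicity list $(r,2r)$; everything else then follows formally. First I would record two easy reductions. Since $K_{r,r,r}$ has edges, $q_0(K_{r,r,r})\ge 2$, so producing any two-eigenvalue matrix forces $q_0(K_{r,r,r})=2$. Moreover the lists $(r,2r)$ and $(2r,r)$ are interchanged by $A\mapsto -A$, which preserves $\symo(K_{r,r,r})$. Hence it suffices to build one $A\in\symo(K_{r,r,r})$ with $\oml(A)=(r,2r)$: negating it yields $(2r,r)$, and its existence yields $q_0(K_{r,r,r})=2$.

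Label the parts $X,Y,Z$, each of order $r$. Because the diagonal blocks must be hollow and $X,Y,Z$ are independent sets, every $A\in\symo(K_{r,r,r})$ has the block form $A=\mtx{0 & P & Q\\ P^T & 0 & R\\ Q^T & R^T & 0}$, where $P,Q,R$ are $r\times r$ matrices with all entries nonzero. I would target the two eigenvalues $-2$ (with multiplicity $r$) and $1$ (with multiplicity $2r$), chosen so that the trace constraint $r(-2)+2r(1)=0$ holds automatically; this is equivalent to $(A-I)(A+2I)=0$, i.e.\ $A^2=-A+2I$. Expanding $A^2$ blockwise turns this single matrix identity into the system $PP^T+QQ^T=P^TP+RR^T=Q^TQ+R^TR=2I$ together with $QR^T=-P$, $PR=-Q$, and $P^TQ=-R$.

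The key step is the ansatz: take $P$ and $R$ to be $r\times r$ real orthogonal matrices and set $Q=-PR$. Using $PP^T=P^TP=I$ and $RR^T=R^TR=I$, each of the six block equations collapses at once (for instance $QR^T=-PRR^T=-P$ and $P^TQ=-P^TPR=-R$), so $A^2=-A+2I$ and every eigenvalue of $A$ lies in $\{1,-2\}$. Both values must actually occur, since otherwise $A=I$ or $A=-2I$, contradicting hollowness. Writing $a=\mult_A(1)$ and $b=\mult_A(-2)$, the conditions $a+b=3r$ and $\tr A=a-2b=0$ force $b=r$ and $a=2r$; thus $\oml(A)=(r,2r)$.

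The one genuine obstacle is the pattern requirement that $P$, $Q=-PR$, and $R$ have \emph{no} zero entries, which is not automatic for orthogonal matrices, and I expect this to be the main point to address. It is handled by a genericity argument in the spirit of those used above for $K_{m,n}$: within $SO(r)\times SO(r)$ the entries of $P$, $R$, and $PR$ are real-analytic functions, none of which is identically zero, so the set where all of them are nonzero is dense, and a suitable pair $(P,R)$ exists (in practice, random orthonormal bases produced by Gram--Schmidt work with probability one). With such a choice, $A\in\symo(K_{r,r,r})$ satisfies $\oml(A)=(r,2r)$, so $(r,2r)\in\omlo(K_{r,r,r})$; negating gives $(2r,r)\in\omlo(K_{r,r,r})$, and $q(A)=2$ gives $q_0(K_{r,r,r})=2$.
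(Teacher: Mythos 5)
Your proof is correct, and---interestingly---it constructs the very same matrices as the paper, but verifies them by a genuinely different route. The paper works spectrum-first: it builds a $3r\times 3r$ orthogonal matrix $U$ whose last $r$ columns are $\frac{1}{\sqrt{3}}\mtx{V\\ I_r\\ W}$ with $V,W$ orthogonal and $V$, $W$, $VW^T$ entrywise nonzero, and sets $M=U(I_{2r}\oplus -2I_r)U^T$, so that $\spec(M)=\{(-2)^{(r)},1^{(2r)}\}$ holds by construction; the work is then a block computation using $UU^T=I$ to show that the diagonal blocks of $M$ vanish and the off-diagonal blocks are $-V$, $-VW^T$, $-W^T$, i.e.\ $M\in\symo(K_{r,r,r})$. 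You work pattern-first: you impose the hollow tripartite block form, observe that the target spectrum is equivalent to the annihilating identity $(A-I)(A+2I)=0$, and check the six resulting block equations under the ansatz $P,R$ orthogonal, $Q=-PR$, after which the multiplicities fall out of the trace constraint. Taking $P=-V$ and $R=-W^T$ shows your $A$ is exactly the paper's $M$. Both arguments rest on the same genericity fact (existence of orthogonal $P,R$ with $P$, $R$, $PR$ entrywise nonzero); your real-analyticity justification is sound, where the paper invokes random Gram--Schmidt instead. Two small points your write-up handles more explicitly than the paper: that $(2r,r)$ follows by negating a realizing matrix, and that $q_0(K_{r,r,r})\ge 2$ because the graph has edges. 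What the paper's conjugation method buys is that the spectrum never needs verification at all and the same template adapts to conjugating other diagonal matrices into a prescribed hollow pattern; what your method buys is that it exposes exactly which algebraic identities the zero pattern forces, so the orthogonal-matrix ansatz looks discovered rather than pulled from thin air.
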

\bpf  Choose two $r\x r$ orthogonal matrices $V$ and $W$ such that there is no zero entry in any of $V$, $W$, and $VW^T$ (creating $V$ and $W$ by choosing random vectors and applying the Gram-Schmidt process will produce such matrices with high probability). Define a $3r\x 3r$ orthogonal matrix
\[U=\mtx{U_{11} & U_{12}\\U_{21} & U_{22}}\mbox{ where }U_{12}=\mtx{\frac 1{\sqrt 3} V\\ \frac 1{\sqrt 3}I_r}\mbox{ and } U_{22}=\frac 1{\sqrt 3}W. \] Define $M=U(I_{2r}\oplus -2I_r)U^T$.  Then
\[M=\mtx{U_{11} & U_{12}\\U_{21} & U_{22}}\mtx{I_{2r}&O_{2r,r}\\ O_{r,2r}&-2I_r }\mtx{U_{11}^T & U_{21}^T\\U_{12}^T & U_{22}^T}=
\mtx{U_{11}U_{11}^T-2U_{12}U_{12}^T&U_{11}U_{21}^T-2U_{12}U_{22}^T\\
U_{21}U_{11}^T-2U_{22}U_{12}^T&U_{21}U_{21}^T-2U_{22}U_{22}^T} .\]
Since $U$ is orthogonal:
\bit
\item
$U_{11}U_{11}^T-2U_{12}U_{12}^T=I_{2r}-3U_{12}U_{12}^T=\mtx{O_r& -V \\-V^T & O_r}$.
\item $U_{11}U_{21}^T-2U_{12}U_{22}^T=-3U_{12}U_{22}^T=\mtx{-VW^T \\-W^T}.$
\item $U_{21}U_{21}^T-2U_{22}U_{22}^T=I_r-3U_{22}U_{22}^T=O_r$.
\eit
Thus \[M=\mtx{O_r & -V & -VW^T\\-V^T & O_r & -W^T\\ -WV^T & -W & O_r}\in\symo(K_{r,r,r}).\]
Observe that $\spec(M)=\{(-2)^{(r)},1^{(2r)}\}$, so $(r,2r)\in\omlo(K_{r,r,r})$ and $2=q(A)=q_0(G)$.
\epf 


\subsection*{Hypercubes}
For the hypercube, $q_0(Q_d)=q(Q_d)=2$ by \cite{AACFMN} and Corollary \ref{c:qo2-cartprod}.  Furthermore, every ordered multiplicity list of length $2^{d-k}$ and of the form $(2^k,\dots,2^k)$ 
can be realized by a matrix in $\symo(Q_d)$ as follows: Apply Proposition \ref{p:allsimp} to choose a matrix in $ \symo(Q_{d-k})$ having all distinct nonzero eigenvalues.  Apply Lemma \ref{l:cartprod} to construct a matrix with ordered multiplicity list $(2^k,\dots,2^k)$.

\subsection*{Cycles}

The following information about minimum hollow rank is  known for cycles \cite{mr0}: If $n$ is even, then $\mro(C_n)=\mr(C_n)=n-2$, i.e., $\Mo(C_n)=\M(C_n)=2$. If $n$ is odd, then $\mro(C_n)=n$, i.e., $\Mo(C_n)=0$.  This implies $0$ cannot be an eigenvalue of $A\in\symo(C_n)$ for $n$ odd.
By Proposition \ref{p:allsimp}, $(1,1,\dots,1)\in\omlo(C_n)$ for $n\ge 3$.
The {\em flipped adjacency matrix} of $C_n$ is obtained from $\A(C_n)$ by replacing one symmetric pair of $1$s by $-1$s; denote the flipped adjacency matrix by $F_n$. It is known $\oml(F_n)=(2,2,\dots,2)$ if $n$ is even and  $\oml(F_n)=(1, 2,2,\dots,2)$ if $n$ is odd \cite{AIM}.  Thus $q_0(C_n)=q(C_n)=\lc\frac n 2\rc$ and $\maxmulto(C_n)=\M(C_n)=2$ for all $n\ge 3$.  Since $q_0(C_n)\ge 3$ for $n\ge 5$ and $0\not\in\spec(A)$ for $A\in\symo(C_n)$ and $n$ odd, no ordered multiplicity list is hollow spectrally arbitrary for odd $n\ge 5$. 


\subsection*{Wheels}


\begin{prop}\label{wheel}  For even $n\ge 4$,  $\Mo(W_n) = 1$. 
For odd $n\ge 7$, $\Mo(W_n) =3$. 
\end{prop}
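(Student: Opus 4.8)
The plan is to use $\Mo(W_n)=n-\mro(W_n)$ and to pin down $\mro(W_n)$ by exploiting the rim of the wheel. Write $W_n=C_{n-1}\vee K_1$ with hub $h$ and rim vertices $v_1,\dots,v_{n-1}$ inducing $C_{n-1}$, and for $A\in\symo(W_n)$ record $A=\mtx{0 & \mathbf{s}^\top\\ \mathbf{s} & B}$, where $B=A[\{v_1,\dots,v_{n-1}\}]\in\symo(C_{n-1})$ is the rim block and $\mathbf{s}$ is the entrywise nonzero vector of spoke weights. The two parity cases are driven by the hollow minimum ranks of cycles recalled earlier: $\mro(C_{n-1})=n-1$ when $n-1$ is odd (even $n$), and $\mro(C_{n-1})=n-3$ when $n-1$ is even (odd $n$).

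\emph{Upper bounds.} Since the rank of a principal submatrix is at most $\rank A$, we have $\rank A\ge \rank B\ge \mro(C_{n-1})$. For even $n$ the rim cycle $C_{n-1}$ is odd, so every $B\in\symo(C_{n-1})$ is nonsingular, whence $\rank A\ge n-1$, $\nul A\le 1$, and $\Mo(W_n)\le 1$. For odd $n$ the rim cycle is even, so $\rank A\ge n-3$, whence $\nul A\le 3$ and $\Mo(W_n)\le 3$. These bounds need nothing beyond the cycle facts.

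\emph{Lower bounds.} Both reduce to producing an entrywise nonzero spoke vector $\mathbf{s}$ that is isotropic for a quadratic form attached to $B$; the key input is that any $B\in\symo(C_{n-1})$ is indefinite, since $\tr B=0$ and $B\ne 0$ force eigenvalues of both signs. For even $n$ take $B$ nonsingular (e.g.\ $\A(C_{n-1})$); then $B^{-1}$ is indefinite, its zero cone $\{\mathbf{s}:\mathbf{s}^\top B^{-1}\mathbf{s}=0\}$ has dimension $n-2\ge 2$ and spans $\R^{n-1}$, so it is not contained in the union of the coordinate hyperplanes and contains some $\mathbf{s}$ with all entries nonzero. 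By the Schur complement, $\det A=-\det(B)\,\mathbf{s}^\top B^{-1}\mathbf{s}=0$, giving $\nul A\ge 1$ and $\Mo(W_n)=1$. For odd $n$ choose $B\in\symo(C_{n-1})$ with $\nul B=2$ (possible since $\mro(C_{n-1})=n-3$). A direct kernel computation shows $\nul A=3$ exactly when $\mathbf{s}\in\range B$ (so $\ker B$ lifts to two kernel vectors $(0,\mathbf{u})$) and, writing $B\mathbf{x}_0=-\mathbf{s}$, the isotropy condition $\mathbf{x}_0^\top B\mathbf{x}_0=\mathbf{s}^\top B^{+}\mathbf{s}=0$ holds (producing a third kernel vector $(1,\mathbf{x}_0)$). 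On the $(n-3)$-dimensional space $\range B$ the form $\mathbf{s}\mapsto\mathbf{s}^\top B^{+}\mathbf{s}$ is nondegenerate and indefinite, so its zero cone has dimension $n-4$; for $n\ge 7$ this is at least $3$.

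The main obstacle is this last existence step for odd $n$: one must ensure that $\range B$ and the isotropy cone inside it are not trapped in the coordinate hyperplanes $\{s_i=0\}$, so that an admissible (entrywise nonzero) spoke vector genuinely exists. This is exactly where $n\ge 7$ enters—the cone must carry enough dimension to avoid the $n-1$ coordinate constraints—and it is what excludes $W_5$. I would handle it by selecting $B$ generically among the nullity-$2$ matrices in $\symo(C_{n-1})$ (using that the even-cycle minimum-rank condition is robust under the perturbations available to a hollow cycle matrix) and then a dimension count to locate a point of the cone lying off every coordinate hyperplane, which yields $\nul A=3$ and hence $\Mo(W_n)=3$.
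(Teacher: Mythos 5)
Your upper bounds are exactly the paper's: both arguments run $\rank A\ge \rank A[\mathrm{rim}]\ge \mro(C_{n-1})$ and then quote the hollow minimum ranks of odd and even cycles. For the lower bounds you take a genuinely different route. The paper borders a rim matrix explicitly via Lemma~\ref{l:Zm-domvtx}: starting from a rim matrix $A$ with a nonzero eigenvalue of multiplicity at least $2$ admitting an entrywise nonzero eigenvector, it builds $\bv$ from eigenvectors so that $\bv^\top A\bv=\tr A=0$ holds for free and $A\bv$ is entrywise nonzero for all but finitely many parameter values; the spoke vector is $\mathbf{s}=A\bv$. Note that this is precisely an explicit witness for your two conditions, since $A\bv\in\range A$ and $\mathbf{s}^\top A^{+}\mathbf{s}=\bv^\top A\bv=0$; your Schur-complement/isotropic-cone framework and the paper's construction are two faces of the same reduction, yours arguing existence abstractly, the paper's producing the vector concretely. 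Your even-$n$ case is essentially complete (one repair noted below), and your framework has a real advantage in the odd case: the paper's proof takes the rim matrix to be $\A(C_{n-1})$, whose rank equals $n-3$ only when $n\equiv 1\pmod 4$ (for $n\equiv 3\pmod 4$, e.g.\ $n=7$, $\A(C_{n-1})$ is nonsingular), whereas you may start from any nullity-$2$ matrix in $\symo(C_{n-1})$.

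The genuine gap is the step you yourself flag: producing an entrywise nonzero isotropic $\mathbf{s}\in\range B$ in the odd case. Neither genericity of $B$ nor a ``dimension count'' against the $n-1$ coordinate hyperplanes can do this as stated---the cone has dimension $n-4<n-1$, so it never has ``enough dimension to avoid the constraints'' in a counting sense; likewise, spanning alone (your even-case justification) does not prevent a set from lying in a finite union of hyperplanes. What closes the gap, for \emph{every} nullity-$2$ choice of $B$, is: (i) no coordinate hyperplane contains $\range B$, since $e_i\in\ker B$ would force column $i$ of $B$ to vanish, impossible for $B\in\symo(C_{n-1})$; (ii) the isotropic vectors of a nondegenerate indefinite form span the space carrying the form, so by (i) the cone lies in no $\{s_i=0\}$; (iii) when the form has rank at least $3$ its zero set is an irreducible variety, hence it is not covered by finitely many proper closed subvarieties, in particular not by its $n-1$ intersections with the coordinate hyperplanes. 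Point (iii) is where $n\ge 7$ truly enters: for $n=5$ the form on $\range B$ has rank $2$, factors into two linear forms, and the resulting pair of hyperplanes of $\range B$ can indeed hide inside coordinate hyperplanes---consistent with $\Mo(W_5)=2$. The same irreducibility remark (there with rank $n-1\ge 3$) is also the missing justification for the inference ``spans, so not contained in the union of coordinate hyperplanes'' in your even case. Alternatively, you can bypass all of this by importing the paper's Lemma~\ref{l:Zm-domvtx} to manufacture $\mathbf{s}$ explicitly.
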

\bpf   

We apply Lemma \ref{l:Zm-domvtx} to establish $\Mo(W_n) = 1$ for even $n\ge 4$ and $\Mo(W_n) = 3$ for odd $n\ge 7$ even.  Assume first   that $n\ge 4$ is even and let $A$ be the adjacency matrix of $C_{2k+1}$ where $2k+1=n-1$.  Then \[\spec(A)=\{2\}\cup\lsb\lp 2\cos\lp\frac{2j\pi}{2k+1}\rp\rp^{(2)}:j=1,\dots,k\rsb\] and $\bw=\lb 1,\cos(\frac{2\pi}{2k+1}),\dots,\cos(\frac{2(2k)\pi}{2k+1})\rb^T$ is an eigenvector for $\lam=2\cos(\frac{2\pi}{2k+1})$ that has every entry nonzero.  Define $B$ as in Lemma \ref{l:Zm-domvtx}.
 Then $n-1=\rank B\ge  \mro(W_n)\ge \mro(C_{n-1})=n-1$ (since $C_{n-1}$ is an odd cycle).
 
 Now assume   that $n\ge 7$ is odd and let $A$ be the adjacency matrix of $C_{2k}$ where $2k=n-1$.   Then $\spec(A)=\{2,-2\}\cup\lsb\lp 2\cos(\frac{2j\pi}{2k})\rp^{(2)}:j=1,\dots,k-1\rsb$ and $\bw=\lb 1,\cos(\frac{2\pi}{2k}),\dots,\cos(\frac{2(2k-1)\pi}{2k})\rb^T$ is an eigenvector for $\lam=2\cos(\frac{2\pi}{2k})$. Since $n\ge 7$, $\lam\ne 0$. If $k$ is odd, then every entry of $\bw$ is nonzero. If $k$ is even, then $\bw'=\lb 0,\sin(\frac{2\pi}{2k}),\dots,\sin(\frac{2(2k-1)\pi}{2k})\rb^T$  is also an eigenvector for $\lam=2\cos(\frac{2\pi}{2k})$, and for an appropriate choice of $b$, the linear combination $b \bw +\sqrt{1-b^2}\bw'$ has every entry nonzero.   Define $B$ as in Lemma \ref{l:Zm-domvtx}.
 Then $n-3=\rank B\ge  \mro(W_n)\ge \mro(C_{n-1})\ge \mr(C_{n-1})=n-3$.
\epf


\begin{prop}\label{W5-mro}\label{ex:AinW5}\label{q0-wheel}
$\Mo(W_5)=2$, $\maxmulto(W_5)=3$, and $q_0(W_5)=3$. 
\end{prop}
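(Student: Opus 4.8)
The plan is to handle the three quantities separately, exploiting that $W_5=C_4\vee K_1$, so that deleting the hub vertex $5$ produces a hollow matrix on the bipartite cycle $C_4$. For $\maxmulto(W_5)$ I would apply Corollary \ref{c:domvtx} with $G=C_4$: the flipped adjacency matrix $F_4$ has $\oml(F_4)=(2,2)$, so it attains $\maxmulto(C_4)=2$ at its smallest eigenvalue, the hypothesis of Corollary \ref{c:domvtx} holds, and thus $\maxmulto(W_5)=\maxmulto(C_4)+1=3$. The upper bound $\maxmulto(W_5)\le3$ can alternatively be obtained from Cauchy interlacing applied along $W_5-5=C_4$ and $C_4-v=P_3$, using $\maxmulto(P_3)=1$.

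For $\Mo(W_5)$ I would produce a single explicit matrix that simultaneously witnesses $\Mo(W_5)\ge2$ and $q_0(W_5)\le3$. Labelling the rim $1,2,3,4$ (edges $12,23,34,41$, nonedges $13,24$) and the hub $5$, let $A_0\in\symo(W_5)$ have all four rim weights equal to $1$ and all four spoke weights equal to $\sqrt2$. Rows $1,3$ of $A_0$ coincide and rows $2,4$ coincide, so $(1,0,-1,0,0)$ and $(0,1,0,-1,0)$ lie in its kernel and $\nul A_0\ge2$, giving $\Mo(W_5)\ge2$. A short computation---restricting $A_0$ to the three vectors fixed by the automorphism $(1\,3)(2\,4)$---gives $p_{A_0}(x)=x^2(x+2)^2(x-4)$, so $\spec(A_0)=\{(-2)^{(2)},0^{(2)},4\}$ has exactly three distinct entries and hence $q_0(W_5)\le3$. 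For the reverse bound $\Mo(W_5)\le2$, the triangle $\{1,2,5\}$ gives a $3\times3$ principal submatrix with determinant $2a_{12}a_{15}a_{25}\ne0$, so every $A\in\symo(W_5)$ has $\rank A\ge3$ and therefore $\nul A\le2$.

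The crux, and the step I expect to be the main obstacle, is $q_0(W_5)\ge3$; the usual shortcut is unavailable because $W_5$ is Hamiltonian, so $\MRo(W_5)=5$ by Theorem \ref{maxrank} and Proposition \ref{zero-eval} does not apply. I would argue by contradiction: suppose $A\in\symo(W_5)$ has $q(A)=2$, say $\spec(A)=\{\mu^{(a)},\nu^{(b)}\}$ with $\mu<\nu$ and $a+b=5$. The trace constraint \eqref{trace-eq} gives $a\mu+b\nu=0$, forcing $\mu<0<\nu$ and $\mu=-(b/a)\nu$; since $a+b=5$ is odd, $a\ne b$, so $\mu\notin\{\nu,-\nu\}$. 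Put $B=A(5)\in\symo(C_4)$; as $C_4$ is bipartite, Theorem \ref{t:bip-spec-sym} yields $\spec(B)=-\spec(B)$, that is, $\mult_B(\lambda)=\mult_B(-\lambda)$ for every $\lambda$. One of $a,b$ is at least $3$, so after replacing $A$ by $-A$ if needed I may assume $b\ge3$; Cauchy interlacing then gives $\mult_B(\nu)\ge b-1\ge2$, and by symmetry $\mult_B(-\nu)\ge2$ as well.

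To finish I would split on $b$. If $b=4$, then $\mult_B(\nu)\ge3$ and symmetry forces $\mult_B(-\nu)\ge3$, for a total exceeding $4=|V(C_4)|$---impossible. If $b=3$ (so $a=2$), then $\mult_B(\nu)=\mult_B(-\nu)=2$ already accounts for all four eigenvalues of $B$, yet interlacing also requires $\mult_B(\mu)\ge a-1=1$ with $\mu\notin\{\nu,-\nu\}$---again impossible. Hence no $A\in\symo(W_5)$ has exactly two distinct eigenvalues, so $q_0(W_5)\ge3$ and therefore $q_0(W_5)=3$. Assembling the three parts yields $\Mo(W_5)=2$, $\maxmulto(W_5)=3$, and $q_0(W_5)=3$.
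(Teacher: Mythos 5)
Your proof is correct, and its crux --- ruling out $q(A)=2$ by deleting the hub, invoking Theorem \ref{t:bip-spec-sym} on $C_4$, and combining interlacing with the trace constraint --- is exactly the paper's argument, though you execute it by multiplicity counting over all cases $b\in\{3,4\}$ while the paper first normalizes to $\spec(A)=\{(-2)^{(3)},3^{(2)}\}$ via Proposition \ref{q02specarb} (and implicitly discards $(4,1),(1,4)$ using $\M(W_5)=3$); your version is marginally more self-contained on this point. The supporting parts genuinely differ. For $\maxmulto(W_5)=3$ the paper imports $\M_+(W_5)=3$ and $\M(W_5)=3$ from \cite{IEPG2}, feeding the former into Lemma \ref{l:MMo-domvtx}; you instead apply Corollary \ref{c:domvtx} to $C_4$ with the flipped adjacency matrix $F_4$ (whose $\oml$ is $(2,2)$), which stays entirely inside the paper's own toolkit and gives both bounds at once. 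For $\Mo(W_5)=2$ the paper proves $\mro(W_5)=3$ by a diagonal congruence and a ``straightforward to verify'' rank computation, whereas your observation that the triangle $\{1,2,5\}$ yields the nonzero principal minor $2a_{12}a_{15}a_{25}$ is cleaner and instantly gives $\rank A\ge 3$ for every $A\in\symo(W_5)$; your witness matrix $A_0$ is (up to labeling) the paper's $A_{221}$ with $\spec=\{(-2)^{(2)},0^{(2)},4\}$, which you additionally reuse to get $q_0(W_5)\le 3$, where the paper instead deduces the upper bound from $\maxmulto(W_5)=3$. In short: same key idea, but your treatment of the two auxiliary parameters avoids external citations and is arguably tighter.
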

 \bpf Assume $5$ is the dominating vertex and number the remaining vertices in cycle order.
 
 We show first that $\mro(W_5)=3$, so $\Mo(W_5)=2$. Let $B\in\symo(W_{5})$.  For any nonsingular matrix $C$, $\rank(C^TBC)=\rank  B$.  By choosing a suitable diagonal matrix $C$, 
  \[ C^TBC=\mtx{0 & 1 &0 &a & b\\ 1 & 0 & 1 & 0 & c\\ 0 & 1 & 0 & 1 & d\\ a & 0 & 1 & 0 & e\\ b & c & d & e & 0}\]
and it is straightforward to verify that $\rank (C^TBC)\ge 3$.  Observe that the rank of the adjacency matrix of $W_{5}$ is $3$.  

Since it is known that $\M_+(W_5)=3$ \cite{IEPG2}, Lemma \ref{l:MMo-domvtx} implies that  $\maxmulto(W_5)\ge 3$.  Furthermore, $\maxmulto(W_5)\le \M(W_5)=3$.

 To see that $q_0(W_5)=3$, suppose to the contrary that we may find $A\in \symo(W_5)$ with $\oml(A)=(3,2)$.  By Proposition \ref{q02specarb}, we may assume $\spec(A)=\{-2^{(3)},3^{(2)}\}$.  Let $B=A(5)$, so $B\in\symo(C_4)$.  By interlacing, $\spec(B)=\{-2^{(2)},\lambda,3\}$, and by the trace constraint \eqref{trace-eq}, $\lambda=1$.  However, $\spec(B)$ is not symmetric about the origin, which is required by Theorem \ref{obs:bp_odd_zero} because $C_4$ is bipartite. Hence, $A$ cannot have $\oml(A)=(3,2)$ which implies $q_0(W_5) \geq 3$. Since $\maxmulto(W_5)=3$, $q_0(W_5) \leq 3$; therefore, $q_0(W_5) = 3$.
\epf

It is known that $q(W_5)=2$ since the IEP-$G$ is solved for all graphs of order at most 5 (see \cite{IEPG2}) so the previous result shows that $W_5$ is another example of a connected graph with $q_0(G)>q(G)$.

\begin{prop}\label{oml-wheel}  The set of hollow ordered multiplicity lists of $W_5$ is \[\omlo(W_5)=\{(3,1,1), (1,1,3),(2,1,2),(2,2,1),(1,2,2), (2,1,1,1),(1,1,1,2),(1,2,1,1), (1,1,2,1),(1,1,1,1,1)\}.\]
\end{prop}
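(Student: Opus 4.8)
The plan is to first pin down the finite list of candidate multiplicity lists from the three global invariants already computed for $W_5$, then to rule out the single spurious candidate $(1,3,1)$, and finally to realize each of the remaining ten lists. To set up the candidates I would use Proposition \ref{W5-mro}: since $q_0(W_5)=3$ every list has length at least $3$, since $\maxmulto(W_5)=3$ no entry exceeds $3$, and since $W_5$ has order $5$ every list is a composition of $5$ (Corollary \ref{c:maxdistevals} also caps the length at $\MRo(W_5)+1=6$, which is not binding here). Enumerating compositions of $5$ with all parts at most $3$ and at least three parts gives exactly eleven candidates: the six length-$3$ lists $(3,1,1),(1,3,1),(1,1,3),(2,2,1),(2,1,2),(1,2,2)$, the four length-$4$ lists $(2,1,1,1),(1,2,1,1),(1,1,2,1),(1,1,1,2)$, and $(1,1,1,1,1)$. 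It is convenient to note that $A\mapsto -A$ maps $\symo(W_5)$ to itself and reverses ordered multiplicity lists, so the candidates (and their realizations) come in negation-pairs, with $(1,3,1)$, $(2,1,2)$ and $(1,1,1,1,1)$ self-paired.

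Next I would exclude $(1,3,1)$. Suppose $A\in\symo(W_5)$ has $\oml(A)=(1,3,1)$ with triple eigenvalue $\mu_2$, and let $v$ be the dominating vertex, so $B=A(v)\in\symo(W_5-v)=\symo(C_4)$. Cauchy interlacing forces the two middle eigenvalues of the $4\times4$ matrix $B$ to both equal $\mu_2$. Since $C_4$ is bipartite, $\spec(B)=-\spec(B)$ by Theorem \ref{t:bip-spec-sym}, and for an even-order spectrum symmetric about the origin the two middle eigenvalues are negatives of one another; hence $\mu_2=-\mu_2$, i.e. $\mu_2=0$. But then $\nul A\ge 3>2=\Mo(W_5)$, a contradiction. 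This is the same mechanism used to exclude $(3,2)$ in the proof of Proposition \ref{W5-mro}.

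Finally I would realize the remaining ten lists, reducing the work to one representative per negation-pair: $(1,1,1,1,1)$, $(3,1,1)$, $(2,2,1)$, $(2,1,2)$, $(2,1,1,1)$, and $(1,2,1,1)$. The all-ones list follows from Proposition \ref{p:allsimp}, since $\MRo(W_5)=5\ge n-1$ by Theorem \ref{maxrank} (the triangle on the dominating vertex together with the opposite edge of $C_4$ is a spanning generalized cycle). For $(3,1,1)$ I would apply Lemma \ref{l:MMo-domvtx} to $C_4\,\vee\, K_1=W_5$ starting from the list $(2,2)\in\omlo(C_4)$ (recall $C_4=K_{2,2}$, Theorem \ref{t:IEPG0-Kmn}), obtaining $B\in\symo(W_5)$ whose least eigenvalue has multiplicity $3$; its remaining two eigenvalues then either coincide, giving the already-excluded $(3,2)$, or are distinct, forcing $\oml(B)=(3,1,1)$. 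The remaining four representatives I would realize by exhibiting explicit matrices in $\symo(W_5)$ with the prescribed spectra, and each negation partner is then obtained via $A\mapsto -A$; note that scaling (Proposition \ref{q02specarb}) is available only for two-eigenvalue lists, so these spectra must be produced directly.

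I expect the explicit realization of $(2,2,1)$, $(2,1,2)$, $(2,1,1,1)$ and $(1,2,1,1)$ to be the main obstacle. There is no general monotonicity tool available, since strong properties fail for hollow matrices (Example \ref{ex:no-hstrong}), so each must be built or perturbed by hand while simultaneously keeping the diagonal zero, keeping the off-diagonal zero/nonzero pattern exactly that of $W_5$, and producing the precise multiplicity structure. A natural route is to start from the $(3,1,1)$ and $(1,1,1,1,1)$ matrices and perturb within $\symo(W_5)$ to split or merge eigenvalues, but verifying that a perturbation lands on exactly the desired list rather than a neighboring one is the delicate part.
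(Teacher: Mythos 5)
Your overall architecture is exactly the paper's: enumerate the candidate compositions of $5$ using $q_0(W_5)=3$ and $\maxmulto(W_5)=3$, kill $(1,3,1)$ by deleting the dominating vertex and combining interlacing with the bipartite symmetry of $\spec(A(v))$ and $\Mo(W_5)=2$, use $A\mapsto -A$ to halve the realization work, get $(1,1,1,1,1)$ from Proposition \ref{p:allsimp}, and get $(3,1,1)$ from Lemma \ref{l:MMo-domvtx} applied to $W_5=C_4\vee K_1$ together with the fact that $(3,2)$ is excluded by $q_0(W_5)=3$. All of these steps are correct as you state them (your $(1,3,1)$ exclusion is, if anything, slightly cleaner than the paper's, since you do not need $\maxmulto(C_4)=2$).

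However, there is a genuine gap: the memberships of $(2,1,2)$, $(2,2,1)$, $(2,1,1,1)$, and $(1,2,1,1)$ (with their reversals) in $\omlo(W_5)$ are never established — you explicitly defer them to "explicit matrices" that are not constructed. Since the statement is a set equality, this is half the theorem, and none of the cited machinery fills it in: Proposition \ref{q02specarb} only rescales two-eigenvalue spectra, Lemma \ref{r:m1} and Lemma \ref{l:MMo-domvtx} control only the multiplicity of the smallest eigenvalue, and the perturbation technique behind Proposition \ref{p:allsimp} only yields all-simple spectra; as you note, no strong-property monotonicity is available for hollow matrices. The paper resolves this precisely by exhibiting four matrices: $\A(W_5)$ itself realizes $(1,1,2,1)$ (its spectrum is $\{-2,\,1-\sqrt{5},\,0^{(2)},\,1+\sqrt{5}\}$), and hand-built matrices $A_{212}$, $A_{221}$, $A_{2111}$ with spectra $\{(-2)^{(2)},0,2^{(2)}\}$, $\{(-2)^{(2)},0^{(2)},4\}$, and $\{(-1)^{(2)},0,\tfrac{1}{2}(2-\sqrt{3}),\tfrac{1}{2}(2+\sqrt{3})\}$ realize $(2,1,2)$, $(2,2,1)$, and $(2,1,1,1)$. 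In particular, one of your four "hard" representatives was available for free from the adjacency matrix; the other three genuinely require the explicit constructions, and without them the proof is incomplete.
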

\bpf 
Assume $5$ is the dominating vertex and number the remaining vertices in cycle order..
 
Suppose to the contrary that  $(1,3,1)\in\oml(W_5)$.  Then there exists a matrix $A\in\symo(W_5)$ with  $\spec(A)=\{\alpha,\beta^{(3)},\gamma\}$ where $\alpha < \beta < \gamma$.  Let $B=A(5)$.  Then $B\in\symo(C_4)$ and $\spec(B)=\{\lambda,\beta^{(2)},\mu\}$ where $\lam < \beta < \mu$ by interlacing and since $\maxmulto(C_4)=2$. Since $-\spec(B)=\spec(B)$, $\beta=0$ and $\lambda=-\mu$.  But $\beta=0$ implies $\rank A=2$, a contradiction to $\Mo(W_5)=2$.

  In all cases an ordered multiplicity list that can be realized can be reversed by considering the negative of of a realizing matrix. 
Lemma \ref{l:MMo-domvtx} implies that   there exists a  matrix $B\in\symo(W_5)$ such that $m_1(B)=\M_+(W_5)=3$.  Since $q_0(W_5)=3$, this implies that $(3,1,1)\in\omlo(W_5)$. Since $W_5$ has a $C_5$ as a subgraph, Proposition \ref{p:allsimp} implies $(1,1,1,1,1)\in\omlo(W_5)$.  We exhibit matrices for the remaining lists:\\
$A_{212}=\mtx{0 & 1 & 0 & -1 & 1 \\
1 & 0 & -1 & 0 & 1 \\
0 & -1 & 0 & 1 & 1 \\
-1 & 0 & 1 & 0 & 1 \\
1 & 1 & 1 & 1 & 0 }, \  A_{221}=\mtx{0 & 1 & 0 & 1 & \sqrt{2} \\
1 & 0 & 1 & 0 & \sqrt{2} \\
0 & 1 & 0 & 1 & \sqrt{2} \\
1 & 0 & 1 & 0 & \sqrt{2} \\
\sqrt{2} & \sqrt{2} & \sqrt{2} & \sqrt{2} & 0}\ A_{2111}=\mtx{0 & \frac{1}{\sqrt{6}} & 0 & \frac{1}{\sqrt{3}} & \sqrt{\frac{2}{3}} \\
 \frac{1}{\sqrt{6}} & 0 & \frac{1}{\sqrt{6}} & 0 & \frac{1}{6} \\
 0 & \frac{1}{\sqrt{6}} & 0 & \frac{1}{\sqrt{3}} & \frac{1}{\sqrt{6}} \\
 \frac{1}{\sqrt{3}} & 0 & \frac{1}{\sqrt{3}} & 0 & \frac{2 \sqrt{2}}{3} \\
 \sqrt{\frac{2}{3}} & \frac{1}{6} & \frac{1}{\sqrt{6}} & \frac{2 \sqrt{2}}{3} & 0}$\!, \ $A_{1121}=\A(W_5)$.
\bit
\item $\spec(A_{212})=\{(-2)^{(2)},0, 2^{(2)}\}$ and
$\oml(A_{212})=(2,1,2)$.
\item $\spec(A_{221})=\{(-2)^{(2)},0^{(2)},4\}$ and $\oml(A_{221}=(2,2,1)$.
\item  $\spec(A_{2111})=\{-1^{(2)},0,\frac{1}{2}
   \left(2-\sqrt{3}\right),\frac{1}{2} \left(2+\sqrt{3}\right)\}$ and
$\oml(A_{2111})=(2,1,1,1)$. 
\item $\spec(A_{1121})=\{-2,1-\sqrt{5},0^{(2)}, 1+\sqrt{5}\}$ and
$\oml(A_{1121})=(1,1,2,1)$. \qedhere
\eit
\epf

\subsection{Small graphs}\label{ss:small}

In this section we solve the HIEP-$G$ for connected graphs of order at most 3 and  determine the possible ordered multiplicity lists for all graphs of order $4$.   

\subsection*{Order $\le 3$}

\begin{obs}$\null$
\bit\item[ $K_1$:] For $A\in\symo(K_1)$, $\spec(A)=\{0\}$.


\item[$K_2$:] For $A\in\symo(K_2)$, $\spec(A)=\{-\lambda,\lambda\}$ and for any $\lambda\ne 0$ there is such an $A$.



\item[$P_3$:] Since $P_3=K_{1,2}$, $\spec(A)=\{-\lambda,0,\lambda\}$ for $A\in\symo(P_3)$ and for any $\lambda\ne 0$ there is such an $A$.

\eit
\end{obs}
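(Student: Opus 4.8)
The plan is to treat each of the three graphs separately by writing down the general form of a hollow symmetric matrix it describes and reading off the spectrum. For $K_1$ there are no edges and the single diagonal entry must be zero, so the only matrix in $\symo(K_1)$ is $[0]$ and its spectrum is $\{0\}$. For $K_2$, every matrix in $\symo(K_2)$ has the form $\mtx{0 & a\\ a & 0}$ with $a\ne 0$, whose eigenvalues are $\pm a$; setting $\lambda=a$ gives $\spec(A)=\{-\lambda,\lambda\}$, and conversely any nonzero $\lambda$ is realized by choosing $a=\lambda$.

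The only case requiring any computation is $P_3$. Ordering the vertices so that the middle vertex is second, every $A\in\symo(P_3)$ is the tridiagonal hollow matrix $\mtx{0 & a & 0\\ a & 0 & b\\ 0 & b & 0}$ with $a,b\ne 0$. I would compute the characteristic polynomial directly, obtaining $p_A(x)=x^3-(a^2+b^2)x=x\bigl(x^2-(a^2+b^2)\bigr)$, so the eigenvalues are $0$ and $\pm\sqrt{a^2+b^2}$. Writing $\lambda=\sqrt{a^2+b^2}>0$ gives $\spec(A)=\{-\lambda,0,\lambda\}$, and any prescribed $\lambda\ne 0$ is realized by choosing, for instance, $a=b=\lambda/\sqrt2$.

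Alternatively, since $P_3=K_{1,2}$, the $P_3$ case follows immediately from Theorem \ref{t:IEPG0-Kmn} with $m=1$ and $n=2$: a multiset $S$ of three reals is realized exactly when $S=-S$ and its number of nonzero entries $k$ satisfies $2\le k\le 2m=2$, forcing $k=2$ and hence $S=\{-\lambda,0,\lambda\}$ for some $\lambda\ne0$. One could also split the argument and avoid invoking the full $K_{m,n}$ result: the symmetry $\spec(A)=-\spec(A)$ comes from Theorem \ref{t:bip-spec-sym} since $P_3$ is bipartite, and $0\in\spec(A)$ because the order is odd.

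There is no substantial obstacle here; the content is a routine verification, and indeed the statement is labeled an observation rather than a theorem. The only point to be careful about is the $P_3$ spectrum, where one should confirm both that the zero eigenvalue is forced (it is, either from the explicit factor $x$ in $p_A$ or from oddness of the order together with spectral symmetry) and that every target value of $\lambda$ is attainable, which is clear from the freedom in choosing $a$ and $b$ with $a^2+b^2=\lambda^2$.
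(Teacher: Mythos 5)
Your proposal is correct, and its closing alternative (reducing $P_3=K_{1,2}$ to Theorem \ref{t:IEPG0-Kmn}, or to bipartite symmetry plus odd order) is exactly the justification the paper itself gives, since the observation is stated without a separate proof beyond the remark ``Since $P_3=K_{1,2}$.'' Your direct computation of $p_A(x)=x\bigl(x^2-(a^2+b^2)\bigr)$ for the tridiagonal matrix is a fine, more elementary verification of the same fact, and the $K_1$ and $K_2$ cases are handled just as the paper intends.
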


\begin{prop}   Let  $\lambda_1<\lambda_2<0$ be real numbers. 
Then  for each of the multisets $\{\lambda_1^{(2)},-2\lambda_1\}$, $\{2\lambda_1,(-\lambda_1)^{(2)}\}$,  $\{\lam_1,\lam_2,-\lam_1-\lam_2\}$, and $\{\lam_1+\lam_2,-\lam_2,-\lam_1\}$, 
there is an   $A\in\symo(K_3)$ such that $A$ has the given spectrum.   Furthermore, $\spec(A)$ has one of these forms for every $A\in\symo(K_3)$.
\end{prop}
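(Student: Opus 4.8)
The plan is to split the biconditional into a computation-driven converse and an existence argument that leans on Theorem \ref{JM12thm}. First I would write a general $A\in\symo(K_3)$ as the symmetric matrix with zero diagonal and off-diagonal entries $a,b,c$, all of which are nonzero because $K_3$ is complete. Since $\tr A=0$ and the three nonzero $2\times 2$ principal minors are $-a^2,-b^2,-c^2$, the characteristic polynomial is $p_A(x)=x^3-(a^2+b^2+c^2)x-2abc$. From this I read off the three structural facts that drive everything: the eigenvalues are real (as $A$ is symmetric), their sum is $0$ (the $x^2$-coefficient vanishes), and their product equals $2abc\neq 0$, so $0$ is not an eigenvalue (equivalently, every matrix in $\symo(K_3)$ has full rank $3$, since $\MRo(K_3)=3$).

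For the ``every $A$ has one of these forms'' direction I would argue by sign counting. Three real, nonzero eigenvalues summing to $0$ cannot all share a sign, so either exactly two are negative and one positive, or exactly one is negative and two positive. Moreover, three equal eigenvalues would all have to be $0$, which is excluded, so the only repeated-eigenvalue pattern is a double together with a simple. Splitting each of the two sign patterns according to whether the two like-signed eigenvalues coincide produces exactly the four listed shapes: two equal negatives forcing the positive $-2\lambda_1$, i.e. $\{\lambda_1^{(2)},-2\lambda_1\}$; two distinct negatives forcing the positive $-\lambda_1-\lambda_2$, i.e. $\{\lambda_1,\lambda_2,-\lambda_1-\lambda_2\}$; and the two mirror-image forms $\{2\lambda_1,(-\lambda_1)^{(2)}\}$ and $\{\lambda_1+\lambda_2,-\lambda_2,-\lambda_1\}$ with the signs reversed.

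For existence I would invoke Theorem \ref{JM12thm} with $n=3$, using $\OL{K_1}\vee K_2=K_3$: it provides a matrix in $\symo(K_3)$ for any prescribed spectrum $\lambda_1\le\lambda_2<0<\lambda_3$ summing to zero. Choosing $\lambda_1=\lambda_2$ realizes $\{\lambda_1^{(2)},-2\lambda_1\}$ and choosing $\lambda_1<\lambda_2$ realizes $\{\lambda_1,\lambda_2,-\lambda_1-\lambda_2\}$. The remaining two multisets each have one negative and two positive eigenvalues, so I would obtain them by negation: if $A\in\symo(K_3)$ then $-A\in\symo(K_3)$ and $\spec(-A)=-\spec(A)$, which sends $\{\lambda_1^{(2)},-2\lambda_1\}\mapsto\{2\lambda_1,(-\lambda_1)^{(2)}\}$ and $\{\lambda_1,\lambda_2,-\lambda_1-\lambda_2\}\mapsto\{\lambda_1+\lambda_2,-\lambda_2,-\lambda_1\}$.

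The argument is short, and the only place requiring care is the bookkeeping in the converse: confirming that the four forms are genuinely exhaustive (which is exactly where excluding $0$ and excluding a triple eigenvalue are used) and matching the inequalities $\lambda_1<\lambda_2<0$ to the right form. I expect the Theorem \ref{JM12thm} route to be cleanest; if one wished to avoid it, existence could be made self-contained by prescribing the target polynomial $x^3-sx-2t$ with $s=a^2+b^2+c^2>0$ and $t=abc\neq 0$ and solving $a^2+b^2+c^2=s$, $abc=t$ on the sphere of radius $\sqrt{s}$: the nonnegative-discriminant (three real roots) condition is precisely $|t|\le (s/3)^{3/2}$, so such $a,b,c$ exist by the intermediate value theorem on the connected sphere and are automatically nonzero since their product is $t\neq 0$.
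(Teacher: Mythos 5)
Your proposal is correct and takes essentially the same route as the paper: both rely on Theorem \ref{JM12thm} (via $\OL{K_1}\vee K_2=K_3$) to realize the two spectra with a repeated or distinct pair of negative eigenvalues, obtain the remaining two by negating the realizing matrices, and show exhaustiveness of the four forms from the trace condition together with the impossibility of a zero eigenvalue. One small correction: in your parenthetical, ``every matrix in $\symo(K_3)$ has full rank'' is the statement $\mro(K_3)=3$, not $\MRo(K_3)=3$ (the latter only bounds the maximum rank), though this is harmless since your computation $\det A=2abc\neq 0$ already establishes the needed fact directly.
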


\bpf  Since $\mro(K_3)=3$ \cite{mr0}, zero cannot be an eigenvalue of any $A\in\symo(K_3)$.  Theorem \ref{JM12thm} shows that  the spectra $\{\lambda_1^{(2)},-2\lambda_1\}$ and $\{\lam_1,\lam_2,-\lam_1-\lam_2\}$ can be realized by  nonnegative matrices $A\in\symo(K_3)$, and the spectra $\{2\lambda_1,(-\lambda)^{(2)}\}$  and $\{\lam_1+\lam_2,-\lam_2,-\lam_1\}$ can be realized by the negatives of such  matrices.
These are the only possible forms that include at least two eigenvalues, do not have a zero eigenvalue, and satisfy the trace condition. 
\epf

\subsection*{Order 4}

There are 6 connected graphs of order 4: $P_4$, $K_{1,3}$, $C_4=K_{2,2}$, the paw graph (obtained by adding an edge to $K_{1,3}$), $K_4-e$ (also called the diamond graph), and $K_4$.  The HIEP-$G$ has been solved for $P_4$, $K_{1,3}$, $C_4=K_{2,2}$ (and is summarized in the next observation).  In this section we solve the HIEP-$G$ for disconnected graphs of order 4 and then determine the possible hollow ordered multiplicity lists for the remaining three connected graphs.
\begin{obs} $\null$
\bit
\item[$K_{1,3}$:]  $\spec(A)=\{-\lambda_1,0^{(2)},\lambda_1\}$ for $A\in\symo(K_{1,3})$, and for any $\lambda_1\ne 0$ there is such an $A$ by Theorem \ref{t:IEPG0-Kmn}.
\item[$C_{4}$:]   $\spec(A)=\{-\lambda_1,0^{(2)},\lambda_1\}$  or $\spec(A)=\{-\lambda_2,-\lambda_1,\lambda_1, \lambda_2\}$ for $A\in\symo(C_4)$, and for any $\lambda_1, \lambda_2\ne 0$ there is such an $A$ by Theorem \ref{t:IEPG0-Kmn} since $C_4=K_{2,2}$.
\item[$P_4$:]  $\spec(A)=\{-\lambda_2,-\lam_1,\lam_1,\lam_2\}$ with $0<\lam_1<\lam_2$ for $A\in\symo(P_4)$, and any such spectrum can be realized by some $A\in\symo(P_4)$ by  Corollary \ref{c:path-HIEPG}.
\eit
\end{obs}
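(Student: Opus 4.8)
The plan is to read off each of the three spectral descriptions as a direct specialization of a characterization already proved earlier: Theorem~\ref{t:IEPG0-Kmn} handles the two complete bipartite cases $K_{1,3}$ and $C_4=K_{2,2}$, while Corollary~\ref{c:path-HIEPG} handles $P_4$. In each case I would substitute the relevant partite-set sizes (or order) into the established ``if and only if'' statement and then translate the resulting numerical constraint on the count of nonzero eigenvalues into the explicit forms listed. The spectral symmetry $\spec(A)=-\spec(A)$ supplied by Theorem~\ref{t:bip-spec-sym} (all three graphs are bipartite) does the rest of the work, so the proof is essentially a bookkeeping exercise rather than a new argument.

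For $K_{1,3}$ I would apply Theorem~\ref{t:IEPG0-Kmn} with $m=1$ and $n=3$. That result says the number $k$ of nonzero eigenvalues of any $A\in\symo(K_{1,3})$ satisfies $2\le k\le 2m=2$, so $k=2$ exactly; together with $\spec(A)=-\spec(A)$ the two nonzero eigenvalues must be $\pm\lambda_1$ and the remaining two entries are $0$, giving $\{-\lambda_1,0^{(2)},\lambda_1\}$. For $C_4=K_{2,2}$ I would apply the same theorem with $m=n=2$, where now $2\le k\le 4$; since $S=-S$ forces $k$ to be even, $k\in\{2,4\}$, which yields exactly the two listed forms $\{-\lambda_1,0^{(2)},\lambda_1\}$ and $\{-\lambda_2,-\lambda_1,\lambda_1,\lambda_2\}$. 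In both cases the realizability of any prescribed nonzero values is precisely the ``if'' direction of Theorem~\ref{t:IEPG0-Kmn}.

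For $P_4$ I would invoke Corollary~\ref{c:path-HIEPG}, which characterizes the spectrum as an arbitrary multiset of four \emph{distinct} reals satisfying $S=-S$. The one point requiring a short argument---and the main (if minor) obstacle---is to exclude $0$ as an eigenvalue: if $0\in S$, then negation would have to permute the remaining three distinct nonzero entries among themselves, but a fixed-point-free involution cannot act on a set of odd cardinality, a contradiction. Hence the four distinct entries occur in the pattern $\{-\lambda_2,-\lambda_1,\lambda_1,\lambda_2\}$ with $0<\lambda_1<\lambda_2$, and every such multiset is realizable by the ``if'' direction of the corollary. I expect no genuine difficulty beyond this parity bookkeeping and the care needed to track that $S=-S$ forces an even number of nonzero eigenvalues.
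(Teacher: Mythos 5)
Your proposal is correct and follows essentially the same route as the paper: the observation is justified exactly by specializing Theorem~\ref{t:IEPG0-Kmn} (with $m=1,n=3$ and $m=n=2$) and Corollary~\ref{c:path-HIEPG} to $n=4$, which is all the paper does. Your extra parity argument excluding $0$ from the $P_4$ spectrum is a correct piece of bookkeeping that the paper leaves implicit.
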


As noted in Observation \ref{o:dunion}, spectra of disconnected graphs can be obtained from spectra of their connected components. 
\begin{rem} Observe that a component isomorphic to  $K_1$  will always contribute 0 to the spectrum.
\bit
\item If $A\in\symo(4K_1)$, then $\spec(A)=\{0^{(4)}\}$.
\item If $A\in\symo(K_2\dcup 2K_1)$, then $\spec(A)=\{-\lam_1,0^{(2)},\lam_1\}$ where $\lam_1> 0$, and for any $\lam_1> 0$ there is such an  $A\in\symo(K_2\dcup 2K_1)$.
\item For $A\in\spec(P_3\dcup K_1)$,  $\spec(A)=\{-\lambda_1,0^{(2)},\lambda_1\}$  where $\lam_1> 0$ and for any $\lambda_1> 0$ there is such an $A\in\symo(P_3\dcup K_1)$.
\item For $\lam_1<\lam_2<0$ and for each of the multisets $\{\lambda_1^{(2)},0,-2\lambda_1\}$, $\{2\lambda_1,0,(-\lambda_1)^{(2)}\}$,  $\{\lam_1,\lam_2,0,-\lam_1-\lam_2\}$, and $\{\lam_1+\lam_2,0,-\lam_2,-\lam_1\}$, there is an   $A\in\symo(K_3\dcup K_1)$ such that $A$ has the given spectrum. For $A\in\spec(K_3\dcup K_1)$, $\spec(A)$ has one of these forms.
\eit
\end{rem}

\begin{prop}\label{K4} The set of hollow ordered multiplicity lists of $K_4$ is 
$\omlo(K_4)=\{(3,1), (1,3), (2,1,1), (1,1,2), $ $(1,2,1), (1,1,1,1)\}$, and $\mro(K_4)=3$.
\end{prop}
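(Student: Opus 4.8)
The plan is to treat this as a finite enumeration followed by a case analysis. Since $K_4$ has order $4$ and contains edges, the Observation that $q_0(G)=1$ exactly when $G$ has no edges forces every list in $\omlo(K_4)$ to be a composition of $4$ into at least two parts. There are precisely seven such compositions: $(3,1),(2,2),(1,3),(2,1,1),(1,2,1),(1,1,2),(1,1,1,1)$. The proof then splits into two halves: rule out the compositions that cannot occur, and realize each of the remaining six. I would also record $\mro(K_4)=3$, which is immediate from \cite{mr0} (the matrix $T_4$ with $(i,j)$-entry $(i-j)^2$ has rank $3$, and $\mro(K_n)=3$ for $n\ge 3$).

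For the realizations I would invoke results already in hand rather than construct matrices from scratch. The lists $(3,1)$ and $(1,3)$ come from $\A(K_4)$ and $-\A(K_4)$, whose spectra are $\{3,(-1)^{(3)}\}$ and $\{(-3)^{(1)}... \}$, as recorded in Example \ref{q0-Kn}. The three length-three lists all follow from Corollary \ref{JM12cor} with $n=4$, so $n-1=3=m_1+m_2$: the choice $(m_1,m_2)=(2,1)$ yields $(2,1,1)$ and $(1,1,2)$, while $(m_1,m_2)=(1,2)$ yields $(1,2,1)$. Finally, $(1,1,1,1)$ follows from Proposition \ref{p:allsimp}: $K_4$ has a Hamiltonian $4$-cycle, hence a generalized cycle of order $4$, so $\MRo(K_4)=4$ by Theorem \ref{maxrank}, and since $\MRo(K_4)=4\ge n-1$, the all-ones list is realized. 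The two exclusions are that $(4)$ is impossible because $K_4$ has edges (forcing $q_0(K_4)\ge 2$), and $(2,2)$ is ruled out by Proposition \ref{no22forK4}.

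The genuinely non-routine inputs, and therefore the main obstacle, are the exclusion of $(2,2)$ (handled by Proposition \ref{no22forK4}, which argues via the off-diagonal entries of $U^2$ for a putative orthogonal $U$) and the realization of the interior double list $(1,2,1)$. For $(1,2,1)$ the subtlety is that the repeated eigenvalue cannot be $0$, since $\Mo(K_4)=4-\mro(K_4)=1$ bounds the multiplicity of eigenvalue zero by one; what makes the argument go through is that Corollary \ref{JM12cor} supplies a \emph{negative} double eigenvalue, so no zero eigenvalue is needed. If one prefers an explicit witness rather than a citation for $(1,2,1)$, I would exhibit the symmetric circulant in $\symo(K_4)$ with first row $(0,2,1,2)$; its eigenvalues are $2(2)+1=5$, $-1$ with multiplicity two, and $1-2(2)=-3$, giving $\spec=\{-3,(-1)^{(2)},5\}$ and hence $\oml=(1,2,1)$, with every off-diagonal entry nonzero.
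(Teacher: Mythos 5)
Your proof is correct and follows essentially the same route as the paper's: exclude $(2,2)$ via Proposition \ref{no22forK4}, realize $(3,1)$ and $(1,3)$ from $\A(K_4)$ and its negative, get the length-three lists from Corollary \ref{JM12cor}, get $(1,1,1,1)$ from Proposition \ref{p:allsimp}, and cite \cite{mr0} for $\mro(K_4)=3$. Your explicit enumeration of compositions, the observation that the double eigenvalue in $(1,2,1)$ is negative rather than zero, and the circulant witness with first row $(0,2,1,2)$ (spectrum $\{-3,(-1)^{(2)},5\}$) are correct additions but not needed beyond what the cited results already give.
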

 \bpf  By Proposition \ref{no22forK4},  $(2,2)\not\in\oml(K_4)$.
The ordered multiplicity lists $(3,1)$ and $(1,3)$ are realized by the adjacency matrix and its negative (see Example \ref{q0-Kn}).  The ordered multiplicity lists $(2,1,1), (1,1,2),$ and $(1,2,1)$ can be realized by Corollary \ref{JM12cor}. Since $\MRo(K_4)=4$, $(1,1,1,1)\in\omlo(K_4)$ by Proposition \ref{p:allsimp}.
It is well known  that $\mro(K_4)=3$ (see \cite{mr0}).
\epf

\begin{prop}\label{K4-e} 
The set of hollow ordered multiplicity lists of $K_4-e$ is $\omlo(K_4-e)=\{ (2,1,1), (1,1,2),$ $ (1,1,1,1)\} $, and $\mro(K_4-e)=3$.
\end{prop}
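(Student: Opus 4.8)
The plan is to view $K_4-e$ as the complete split graph $\OL{K_2}\vee K_2$, where the two nonadjacent vertices form the $\OL{K_2}$ and the remaining two form the $K_2$, and then to decide each of the eight compositions of $4$ one at a time. Ordering the vertices so that the missing edge is $\{3,4\}$, a generic matrix of $\symo(K_4-e)$ is
\[A=\mtx{0 & a & b & c\\ a & 0 & d & e\\ b & d & 0 & 0\\ c & e & 0 & 0},\qquad a,b,c,d,e\ne 0.\]
The goal is to realize $(2,1,1)$, $(1,1,2)$, $(1,1,1,1)$ and to exclude $(4)$, $(3,1)$, $(1,3)$, $(2,2)$, and $(1,2,1)$.

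For the three realizable lists I would apply Corollary \ref{JM12cor} with $r=2$ and $n=4$, so that $\OL{K_r}\vee K_{n-r}=K_4-e$ and $r-1=1$: taking the single part $(2)$ yields $(2,1,1),(1,1,2)\in\omlo(K_4-e)$, and taking the parts $(1,1)$ yields $(1,1,1,1)\in\omlo(K_4-e)$. (The list $(1,1,1,1)$ also follows from Proposition \ref{p:allsimp}, since the $4$-cycle $1\,3\,2\,4$ shows $\MRo(K_4-e)=4$.)

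For the exclusions I would first bound the maximum hollow multiplicity. If $\mult_A(\lam)=3$ for some $A\in\symo(K_4-e)$, then $A-\lam I$ is a symmetric matrix of rank one, say $\pm\bw\bw^T$; its constant diagonal $-\lam$ forces $\bw_1^2=\dots=\bw_4^2\ne 0$, so every entry of $\bw$ is nonzero, and then the nonedge entry $(A-\lam I)_{34}=\pm\bw_3\bw_4\ne 0$ is a contradiction. Hence $\maxmulto(K_4-e)\le 2$, which eliminates $(4)$, $(3,1)$, and $(1,3)$. Since vertices $1$ and $3$ have the unique common neighbor $2$, Corollary \ref{leaf-no-oml-rr} eliminates $(2,2)$.

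The delicate exclusion is $(1,2,1)$, which I would handle with Proposition \ref{p:no121}; this requires first establishing $\mro(K_4-e)=3$, and the order of operations is essential to avoid circularity. The lower bound $\mro(K_4-e)\ge 3$ must be proved directly from the displayed form: if $be-cd\ne 0$ then rows $3$ and $4$ span $\{(\ast,\ast,0,0)\}$ while row $1$ has nonzero third entry $b$, so rows $1,3,4$ are independent; and if $be-cd=0$ then row $4$ and column $4$ are each the same nonzero multiple of row $3$ and column $3$, so $\rank A$ equals the rank of the principal submatrix on $\{1,2,3\}$, whose determinant is $2abd\ne 0$. Taking $c=b,\ e=d$ gives a matrix of rank exactly $3$, so $\mro(K_4-e)=3=n-1$ (this also proves the final claim). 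Because $K_4-e$ has even order $n=4$, satisfies $\mro(K_4-e)\ge n-1$, and has the vertex $1$ whose deletion leaves the bipartite graph $P_3$, Proposition \ref{p:no121} applies with $\tfrac{n-2}2=1$ as the sum on each side of the central $2$, giving $(1,2,1)\not\in\omlo(K_4-e)$. The main obstacle is exactly this dependency: the rank lower bound must be obtained by hand, never by excluding $(1,2,1)$, before Proposition \ref{p:no121} can be invoked.
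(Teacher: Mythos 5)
Your proof is correct and reaches every required conclusion, and its skeleton matches the paper's: realizations via Corollary \ref{JM12cor} (and Proposition \ref{p:allsimp}), exclusion of $(2,2)$ through orthogonality, and exclusion of $(1,2,1)$ through Proposition \ref{p:no121} with the deletion to $P_3$. Where you genuinely diverge is in two sub-arguments, and both divergences are defensible. First, to kill $(3,1)$ and $(1,3)$ the paper simply cites that these lists are not even in $\oml(K_4-e)$ (a known IEP-$G$ fact for this graph, equivalent to $\M(K_4-e)=2$), whereas you prove $\maxmulto(K_4-e)\le 2$ from scratch: if $\mult_A(\lam)=3$ then $A-\lam I=\pm\bw\bw^T$, the constant diagonal forces all $\bw_i^2$ equal, and the nonedge entry gives a contradiction. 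This buys self-containedness at the cost of one small elision: when $\lam=0$ the diagonal forces $\bw=\bzero$ rather than ``every entry nonzero,'' so you should add the clause that $\lam=0$ makes $A-\lam I=A$ a hollow rank-one symmetric matrix, hence the zero matrix, contradicting $a\ne 0$. Second, for $\mro(K_4-e)=3$ the paper argues structurally (an induced $C_3$ gives $\mro\ge 3$ by induced-subgraph monotonicity, and the non-uniqueness of order-$4$ generalized cycles gives $\mro\le 3$), while you do an explicit case analysis on $be-cd$; your computation is correct and more elementary, though longer. Your exclusion of $(2,2)$ via Corollary \ref{leaf-no-oml-rr} (vertices $1$ and $3$ share the unique common neighbor $2$) is in substance identical to the paper's inline computation $(U^2)_{1,2}=u_{1,3}u_{3,2}\ne 0$ — the corollary's proof is exactly that calculation — so citing it is the cleaner choice. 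Finally, your worry about circularity between $\mro$ and $(1,2,1)$ is well taken but does not distinguish you from the paper: there too, $\mro(K_4-e)=3$ is derived independently of any multiplicity-list exclusion.
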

\bpf 
Since $(3,1),(1,3)\not\in\oml(K_4-e)$, $(3,1),(1,3)\not\in\omlo(K_4-e)$. Since $\mro(K_4-e)=3$ and  removing a degree 3 vertex from $K_4-e$ produces $P_3$, $(1,2,1)\not\in\omlo(K_4-e)$ by Proposition \ref{p:no121}. 

Suppose  $A\in\sym(K_4-e)$ and $(2,2)\in\oml(A)$. 
 Then by Proposition \ref{oml-rr}, there is an orthogonal matrix $U=[u_{ij}]\in\symo(K_4)$, so 
 every off-diagonal entry of $U^2$ must be zero.  
If edge $\{1,4\}$ is the edge that was deleted from $K_4$, then $(U^2)_{1,2}=u_{1,3}u_{3,2}\ne 0$.  Thus $(2,2)\not\in\oml(A)$.

  Since $K_4-e=\OL{K_2}\vee K_2$, the ordered multiplicity lists $(2,1,1)$ and $ (1,1,2)$  can be realized by Corollary \ref{JM12cor}. Since $\MRo(K_4-e)=4$, $(1,1,1,1)\in\omlo(K_4-e)$ by Proposition \ref{p:allsimp}. 

Since $K_4-e$ has an induced $C_3$, $\mro(K_4)\ge 3$.  Since $K_4-e$ does not have a unique generalized cycle of order $4$, $\mro(K_4-e)\le 3$. \epf

\begin{prop}\label{paw} Let $G$ be the paw graph.  Then the set of hollow ordered multiplicity lists of $G$ is 
$\omlo(G)= \{(2,1,1), (1,1,2),$ $ (1,1,1,1)\}$, and $\mro(G)=4$. 
\end{prop}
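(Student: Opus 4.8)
The plan is to first determine $\mro(G)=4$, then eliminate the five forbidden ordered multiplicity lists, and finally exhibit realizing matrices for the three survivors. Label the vertices so that $\{1,2,3\}$ induce the triangle and the leaf $4$ is adjacent only to $1$; thus $A\in\symo(G)$ has nonzero off-diagonal entries exactly at $a_{12},a_{13},a_{23},a_{14}$ (and their transposes). For the minimum rank I would invoke Theorem \ref{maxrank} together with the observation following \eqref{eq:gencyc}: the only way to cover all four vertices by vertex-disjoint edges and cycles is the perfect matching $\{1,4\},\{2,3\}$, because the degree-one vertex $4$ forces the edge $\{1,4\}$, after which $\{2,3\}$ is the only remaining edge; there is no $4$-cycle and no covering using the triangle. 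Hence $G$ has a unique generalized cycle of order $4$ and none of larger order, so $\mro(G)=\MRo(G)=4$. In particular every matrix in $\symo(G)$ is nonsingular, $0$ is never an eigenvalue, and every realizable list sums to $4$ with all parts corresponding to nonzero eigenvalues.

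For the exclusions I would argue as follows. The list $(4)$ is impossible since $G$ has an edge. To rule out $(3,1)$ and $(1,3)$, suppose some $\mu$ is an eigenvalue of $A\in\symo(G)$ of multiplicity $3$; then $A-\mu I$ is a rank-one symmetric matrix $\varepsilon\,\bu\bu\trans$. Its $(2,4)$ and $(3,4)$ entries vanish while its $(1,4)$ entry $a_{14}\ne 0$, forcing $u_4\ne 0$ and hence $u_2=u_3=0$; but then the $(2,3)$ entry equals $\varepsilon u_2u_3=0$, contradicting $a_{23}\ne 0$. So $\maxmulto(G)\le 2$ and neither $(3,1)$ nor $(1,3)$ occurs. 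The list $(2,2)$ is excluded by Corollary \ref{leaf-no-oml-rr}, since $G$ is connected of order at least $3$ and has a leaf. Finally $(1,2,1)$ is excluded by Proposition \ref{p:no121}: $G$ has even order $4$, satisfies $\mro(G)=4\ge n-1$, and $G-1=K_2\dcup K_1$ is bipartite, while in $(1,2,1)$ the middle block $2$ is flanked by blocks summing to $1=\frac{n-2}{2}$ on each side.

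For the realizations, $(1,1,1,1)$ is immediate from Proposition \ref{p:allsimp} because $\MRo(G)=4\ge n-1$. The one genuinely new point is realizing $(2,1,1)$, and this is the main obstacle: the paw is not a complete split graph, so Corollary \ref{JM12cor} does not apply and a direct construction is needed. I would exploit the automorphism swapping vertices $2$ and $3$. Taking $a_{12}=a_{13}$ makes $(0,1,-1,0)\trans$ an eigenvector with eigenvalue $-a_{23}$ and block-diagonalizes $A$ into this one-dimensional antisymmetric part together with a $3\times 3$ symmetric block $M$ on the fixed space. A short computation of the characteristic polynomial of $M$ shows that $-a_{23}$ is a root of it precisely when $a_{23}^2=a_{14}^2+1$, forcing $-a_{23}$ to have total multiplicity $2$ in $A$; choosing $a_{12}=a_{13}=a_{14}=1$ and $a_{23}=\sqrt2$ then yields $\spec(A)=\{(-\sqrt2)^{(2)},\,\sqrt2-1,\,\sqrt2+1\}$, so $\oml(A)=(2,1,1)$.

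Negating this matrix gives a realization of $(1,1,2)$. Collecting the three survivors then proves $\omlo(G)=\{(2,1,1),(1,1,2),(1,1,1,1)\}$, while $\mro(G)=4$ was established at the outset. The only delicate step is the explicit $(2,1,1)$ construction; the remaining arguments are short applications of the quoted results, interlacing-free except where packaged inside Proposition \ref{p:no121}.
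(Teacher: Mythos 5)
Your proof is correct, and although it follows the same skeleton as the paper's (exclude $(3,1),(1,3),(2,2),(1,2,1)$, realize the other three lists, get $\mro(G)=4$ from the unique generalized cycle of order $4$), it takes a genuinely different route at the two substantive steps. For the exclusions, the paper simply cites \cite{IEPG2} for the fact that $(3,1),(1,3),(2,2)\not\in\oml(G)$ --- i.e., these lists already fail without the hollow constraint --- whereas you prove everything internally: your rank-one argument (writing $A-\mu I=\varepsilon\,\bu\bu\trans$, so the zero entries in positions $(2,4)$ and $(3,4)$ force $u_2=u_3=0$, contradicting $a_{23}\neq 0$) correctly rules out any eigenvalue of multiplicity $3$, and Corollary \ref{leaf-no-oml-rr} disposes of $(2,2)$. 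For the realization of $(2,1,1)$, the paper invokes Lemma \ref{r:m1}, which produces $B\in\symo(G)$ with $m_1(B)=\M_+(G)=2$; combined with the exclusion of $(2,2)$ this pins down $\oml(B)=(2,1,1)$, and negation gives $(1,1,2)$. You instead give an explicit construction exploiting the automorphism swapping the two degree-$2$ vertices, and your computation checks out: with $a_{12}=a_{13}=a_{14}=1$ and $a_{23}=\sqrt 2$, the antisymmetric vector $(0,1,-1,0)\trans$ carries eigenvalue $-\sqrt 2$, the symmetric block has characteristic polynomial $(x+\sqrt2)(x^2-2\sqrt2\,x+1)$, and $\spec(A)=\{(-\sqrt 2)^{(2)},\sqrt2-1,\sqrt2+1\}$. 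Your route is longer but fully self-contained (no reliance on \cite{IEPG2}, and no need to know $\M_+$ of the paw) and produces concrete realizing matrices; the paper's route is shorter and its realization lemma applies uniformly to any graph with no isolated vertices. Both applications of Proposition \ref{p:no121} are valid: you delete the degree-$3$ triangle vertex, leaving $K_2\dcup K_1$, while the paper deletes a degree-$2$ vertex, leaving $P_3$; either bipartite graph serves.
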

\bpf  
Since $(3,1),(1,3), (2,2)\not\in\oml(G)$ \cite{IEPG2}, $(3,1),(1,3),(2,2)\not\in\omlo(G)$.  Since  removing a degree 2 vertex from the paw produces $P_3$, $(1,2,1)\not\in\omlo(G)$ by Proposition \ref{p:no121}. 

  The ordered multiplicity lists $(2,1,1)$ and $ (1,1,2)$  can be realized by Lemma \ref{r:m1}. Since $\MRo(G)=4$, $(1,1,1,1)\in\omlo(G)$ by Proposition \ref{p:allsimp}. 

Note that $\mro(G)=4$ since the paw has a unique generalized cycle of order 4 consisting of two disjoint edges. 
\epf


\section*{Acknowledgements}

This research was partially supported by NSF grant 1916439.

\end{document}